\theoremstyle{plain}
\newtheorem{theorem}{Theorem}
\newtheorem{corollary}{Corollary}
\newtheorem{lemma}{Lemma}
\newtheorem{proposition}{Proposition}
\theoremstyle{definition}
\newtheorem{remark}{Remark}
\newtheorem*{assumption}{Assumptions}
\numberwithin{theorem}{section}
\numberwithin{corollary}{section}
\numberwithin{lemma}{section}
\numberwithin{definition}{section}
\numberwithin{example}{section}
\numberwithin{remark}{section}
\numberwithin{proposition}{section}
\definecolor{MY}{rgb}{0.5,0,0.45}
\newcommand{\1}{\mathds 1}
\newcommand{\eps}{\varepsilon}
\newcommand{\dD}{\mathcal{D}}
\newcommand{\eE}{\mathcal{E}}
\newcommand{\ent}{\mathrm{Ent}}
\newcommand{\var}{\mathrm{Var}}
\def\al{\alpha}
\def\be{\beta}
\def\la{\lambda}
\def\del{\delta}
\def\Del{\Delta}
\def\si{\sigma}
\def\eps{\varepsilon}
\def\Om{\Omega}
\def\dOm{\partial \Omega}
\def\pd{\partial}
\def\laa{\lambda_\alpha}
\def\sib{\sigma_\beta}
\def\lp{\left(}
\def\rp{\right)}
\begin{document}

\title[]{Functional inequalities for doubly weighted Brownian motion with sticky-reflecting boundary diffusion}
\author{Marie Bormann}
\address{Universit\"at Leipzig, Fakult\"at f\"ur Mathematik und Informatik, Augustusplatz 10, 04109 Leipzig, Germany and Max Planck Institute for Mathematics in the Sciences, 04103 Leipzig, Germany}
\email{bormann@math.uni-leipzig.de}

\begin{abstract}
We give upper bounds for the Poincar\'{e} and logarithmic Sobolev constants for doubly weighted Brownian motion on manifolds with sticky-reflecting boundary diffusion under curvature assumptions on the manifold and its boundary. To achieve this we use an interpolation approach based on energy interactions between the boundary and the interior of the manifold as well as the weighted Reilly formula. Along the way we also obtain a lower bound on the first nontrivial doubly weighted Steklov eigenvalue and an upper bound on the norm of the doubly weighted boundary trace operator on Sobolev functions. We also consider the case of doubly weighted Brownian motion with pure sticky reflection.
\end{abstract}

\date{\today}

\maketitle

\section{Introduction}

Let $\Om$ be a smooth compact connected Riemannian manifold of dimension $d\ge2$ with smooth connected boundary $\dOm$. We consider the semigroup on $C(\Om)$ induced from the Feller generator $(\dD(L),L)$ given by

\begin{align*}
	\dD(L) &= \{f\in C(\Om)\ |\ L f\in C(\Om)\}, \\
	L f &= \lp \Delta f + \frac{\nabla \al}{\al}\cdot \nabla f \rp \1_{\Om^\circ} - \lp\frac{\al}{\be} \frac{\partial f}{\partial N}\rp \1_{\dOm}  + \del \lp \Delta^\tau f + \frac{\nabla^\tau \be}{\be}\cdot \nabla^\tau f\rp \1_{\dOm},
\end{align*}
where $\frac{\pd}{\pd N}$ is the derivative in the direction of the outward pointing unit normal vector field~$N$, $\nabla^\tau$ is the tangential gradient on $\dOm$, $\Del^\tau$ is the Laplace-Beltrami operator on $\dOm$, $\al,\be$ are appropriate weight functions on $\Om$ and $\delta \geq 0$ is a fixed parameter. Note that by $C(\Om)$ we always mean $C\lp\overline{\Om}\rp$. The induced Markov process $(X_t)_{t\ge0}$ is a diffusion on $\Om$ which performs Brownian motion with drift $\nabla\al/\al$ in the interior $\Om^\circ$ and whose boundary behaviour consists of Brownian diffusion with drift $\nabla^\tau \be/\be$ according to speed $\delta$ along the boundary $\dOm$ as well as sticky reflection back into the interior with intensity depending on the quotient $\al/\be$. 
We make a distinction between the cases $\delta>0$ and $\delta=0$ and refer to the former as Brownian motion with sticky-reflecting boundary diffusion doubly weighted according to $\al,\be$ and to the latter as Brownian motion with purely sticky reflection without boundary diffusion doubly weighted according to $\al,\be$. Note in particular that we allow different weight functions $\al$ for the interior and $\beta$ for the boundary. The non-weighted case is included by choosing $\al,\be$ to be constant. The boundary behaviour is called sticky as the process sojourns at the boundary and thus fundamentally differs from diffusion reflected or killed at the boundary. In general, sticky reflection from the boundary arises from reflection by a time change at the boundary which causes the process to slow down at the boundary while not changing its behaviour away from the boundary, see~\cite{MR154338, MR3271518}. The stickiness is independent of the value of $\delta$ and may be formalised in terms of a positive occupation time on the boundary (\cite{gv}). 
However, while the occupation time on the boundary is positive, the process never spends a whole interval of time on the boundary, see~\cite{MR611513,MR4064533}. Overall, this boundary behaviour is `non-standard' as opposed to the more `standard' reflection or killing at the boundary corresponding to Neumann or Dirichlet boundary conditions. As a boundary condition in the present setting we may write
\begin{equation*}
\Delta f + \frac{\nabla \al}{\al}\cdot \nabla f = - \frac{\al}{\be} \frac{\partial f}{\partial N} +\delta\lp\Delta^\tau f + \frac{\nabla^\tau \be}{\be}\cdot \nabla^\tau f\rp  \text { on } \dOm, 
\end{equation*}
which arises from requiring $L f$ to be continuous at the boundary.\\
The operator $L$ is symmetric with respect to the measure $\mu:=\al d\la + \be d\si$, where $\la$ and $\si$ are respectively the volume measure on $\Om$ and the Hausdorff measure on $\dOm$. The associated Dirichlet form is 
\begin{equation*}
\eE(f,g):= \int \nabla f \cdot \nabla g\ \al d\la + \delta \int \nabla^\tau f \cdot \nabla^\tau g\ \be d \si \text{ in } L^2(\Om,\mu)
\end{equation*} 
with domain $\dD(\eE)$ equal to the closure of $C^1(\Om)$ wrt.\ the norm defined by $\lp\eE(\cdot)+ |\cdot|_{2,\mu}^2\rp^{1/2}$.
Furthermore, the Markov process induced by $(\dD(L),L)$ may be identified as a weak solution of the following stochastic differential equation
\begin{align*}
	dX_t &= \1_{\Om^\circ} (X_t) \lp \sqrt{2}dB_t +\frac{\nabla \al}{\al}(X_t)dt \rp -\1_{\dOm}(X_t)\frac{\al}{\be}(X_t)N(X_t)dt \notag \\
		&+ \del \1_{\dOm}(X_t)\lp \sqrt{2}dB^\tau_t + \frac {\nabla^\tau \be}{\be}(X_t)dt \rp, \\
	dB^\tau_t &= P(X_t)\circ dB_t,\notag \\
	X_0&=x, \notag
\end{align*}
for q.e.\ $x\in\Om$ where $P$ is the projection on the tangent space, see~\cite{gv}. Strong solutions are not expected to exist due to the fact that in~\cite{MR3271518} it was shown that Brownian motion on $\mathbb{R}_+$ with sticky reflection at zero arises from a stochastic differential equation which does not have strong solutions. 
In the case $\delta=0$ without boundary diffusion the infinitesimal generator reduces to $(\hat{L},D(\hat{L}))$,
\begin{align*}
	\dD(\hat{L}) &= \{f\in C(\Om)\ |\ \hat{L} f\in C(\Om)\}, \\
	\hat{L} f &= \1_{\Om^\circ} \lp \Delta f + \frac{\nabla \al}{\al}\cdot \nabla f \rp - \1_{\dOm} \frac{\al}{\be} \frac{\partial f}{\partial N}.
\end{align*}
The operator $\hat{L}$ is symmetric with respect to the measure $\mu$, the associated Dirichlet form is 
\begin{equation*}
\hat{\eE}(f,g):= \int \nabla f \cdot \nabla g\ \al d\la  \text{ in } L^2(\Om,\mu),
\end{equation*} 
with domain $\dD(\hat{\eE})$ equal to the closure of $C^1(\Om)$ wrt.\ the norm defined by $\lp\hat{\eE}(\cdot)+ |\cdot|_{2,\mu}^2\rp^{1/2}$
and the corresponding stochastic differential equation is
\begin{align*}
	d\hat{X}_t &= \1_{\Om^\circ} (\hat{X}_t) \lp \sqrt{2}dB_t +\frac{\nabla \al}{\al}(\hat{X}_t)dt \rp -\1_{\dOm}(\hat{X}_t)\frac{\al}{\be}(\hat{X}_t)N(\hat{X}_t)dt, \notag \\
	\hat{X}_0&=x \notag.
\end{align*}

Brownian motion with sticky-reflecting boundary diffusion has been studied as early as the 1950s, e.g.\ in~\cite{wentzell} which is concerned with finding the most general boundary condition for an elliptic operator which still produces an infinitesimal generator of a Markov semigroup. First rigorous constructions date back to~\cite{MR126883,MR287612,MR929208} for special domains $\Om$, see also~\cite{MR611513,MR1011252} and the references therein for more details including constructions of such diffusion processes by means of stochastic differential equations. Later extensions to jump-diffusion processes on general domains followed, cf.~\cite{MR245085,MR4176673}. A derivation and physical interpretation as well as a discussion of qualitative regularity properties of the associated semigroups have been given in~\cite{MR2215623, MR4065110}. A rigorous construction of doubly weighted sticky-reflecting Brownian motion with or without boundary diffusion in terms of Dirichlet forms has been given recently in~\cite{gv}. Furthermore, interest for the case including boundary diffusion has arisen lately in the context of interacting particle systems with mean-field or zero-range pair interaction~\cite{MR4096131,MR4704529}. Finally, large deviations for Brownian motion with sticky-reflecting boundary diffusion have been considered recently in~\cite{casteras2025largedeviationsstickyreflectingbrownian}, the metric gradient flow structure of the Fokker-Planck equation associated with Brownian motion with sticky-reflecting boundary diffusion has been investigated in~\cite{MR4901547,bormann2025gradientflownoneheat} and isoperimetric inequalities have been considered in~\cite{bormann2025cheegertypeinequalitydriftlaplacian}.\\

Here we show that Poincar\'{e} and logarithmic Sobolev inequalities are fulfilled for Brownian motion with sticky reflection and with or without boundary diffusion doubly weighted by $\al,\be$ and give upper bounds for the Poincar\'{e} and logarithmic Sobolev constants. The bounds are in terms of the geometry of the manifold and the given weights as well as the more `standard' Poincar\'{e} and logarithmic Sobolev constants for simple reflected Brownian motion on $\Om$ or Brownian motion on $\dOm$. This allows to describe the rate of convergence to equilibrium characterised by the invariant measure $\mu$. While doing so we also manage to give a lower bound on the first nontrivial doubly weighted Steklov eigenvalue and an upper bound on the norm of the Sobolev trace operator corresponding to the weights $\al,\be$. For simplicity we will restrict to the cases $\delta=1$ and $\delta=0$, since the case $\delta\in(0,\infty)$ is a simple adaptation of the former.  

For constant weights $\al,\be$ functional inequalities have been considered in the setting of positive curvature in~\cite{mvr} and for general curvature bounds in~\cite{brw}. The contribution of this note is to transfer these known results to the case of a more general class of weights by using again some of the essential techniques from~\cite{brw}. Similar results for the weighted Laplacian with Dirichlet, Neumann or Robin boundary condition have been shown in~\cite{MR1262968,MR1432586,setti,MR3951758} and many other articles. Note that the setting introduced above arises from the unweighted case by adding the same weights both to the Dirichlet form and to the Hilbert space. Compared to the nonweighted case this means that additional drift terms in the interior and on the boundary are included. Similarly just adding weights to the Hilbert space but not the form would describe a time change for the corresponding stochastic process, and just adding weights to the form but not the Hilbert space changes the way energy is measured and can for example be used when considering heterogeneous media. See~\cite{MR2778606} for more details.\\

In the remainder of this section we introduce the notation and underlying concepts as well as assumptions on the weight functions. Subsequently in section~\ref{sec:PI} the Poincar\'{e} inequality is discussed firstly in the case including boundary diffusion for coinciding weights (subsection~\ref{ssec:coinweights}) or general weights (subsection~\ref{ssec:genweights}) where a lower bound on the first nontrivial doubly weighted Steklov eigenvalue is a byproduct, and also in the case without boundary diffusion (subsection~\ref{ssec:pcwobd}). In section~\ref{sec:LSI} the logarithmic Sobolev inequality is discussed firstly in the case including boundary diffusion (subsection~\ref{ssec:lsiwbd}) and also in the case without boundary diffusion (subsection~\ref{ssec:lsiwobd}). Explicit upper bounds for the logarithmic Sobolev constants in both cases are finally obtained in section~\ref{sec:bii} where an upper bound on the norm of the doubly weighted boundary trace operator on Sobolev functions is a byproduct.

\subsection{Notation and underlying concepts}

In this preliminary section the notation and underlying concepts used throughout the paper will be introduced.
By $\Omega$ we denote a smooth compact connected Riemannian manifold of dimension $d \ge 2$ with smooth connected boundary $\dOm$. In particular, when writing $\Om$ the boundary is included. We write $\lambda$ for the volume measure on $\Om$ and $\sigma$ for the Hausdorff measure on $\dOm$, where for simplicity $|\Om|:=\lambda(\Om)$ and $|\dOm|:=\sigma(\dOm)$. 
By $C, C^1,C^\infty, C^\infty_0$ we denote respectively the spaces of real-valued functions that are continuous, once continuously differentiable, smooth, smooth and vanishing at infinity and denote in parentheses whether functions on $\Om$ or $\dOm$ are considered. We will consider weight functions $\al:\Om\to\mathbb{R}$ for $\lambda$ and $\beta:\Om\to\mathbb{R}$ for $\sigma$ and describe the normalised weighted measures via $ \laa:=\frac{\al}{A}d\la,\sib:= \frac{\be}{B}d\si$ with $A:=\int_\Om\al d\la, B:=\int_{\dOm}\be d\si$. Furthermore, we write $d\mu:=\al d\la+\be d\si$. Note that $\be$ is defined on the whole $\Om$ and not just on $\dOm$.
$W^{1,q}$ and $L^p$ denote Sobolev and Lebesgue spaces and we indicate in parentheses the relevant space and weighted measure. We write $|\cdot|$ for the norm induced by the Riemannian metric, $\|\cdot\|$ for the Hilbert-Schmidt norm, $|\cdot|_\infty$ for the $L^\infty$ norm wrt.\ $\la$ on $\Om$ and $|\cdot|_p$ for $L^p$ norms, where a further lower index is added to indicate the relevant weighted measure.
We write $\vert_{\dOm}$ for the trace operator from $W^{1,2}(\Om,\al d\la)$ to $L^2(\dOm,\be d\si)$.
$\nabla, \nabla^2, \Delta, \Delta_\al$ denote the gradient, Hessian, Laplace-Beltrami and $\al$-weighted Laplacian $\Delta+ \frac{\nabla \al}{\al} \cdot \nabla f$, and $\nabla^\tau,\Delta^\tau$ denote the gradient and Laplace-Beltrami operator on $\dOm$. We write $N$ for the outward-pointing normal vector field on $\dOm$. Furthermore, we write $\mathrm{sect}, \mathrm{Ric}, \mathrm{Ric}_\al, \mathrm{R}_{\al,n}$ for the sectional curvature, Ricci curvature, weighted Ricci curvature $\mathrm{Ric}-\nabla^2(\ln(\al))$ and $n$-weighted Ricci curvature $\mathrm{Ric} - \nabla^2(\ln(\al))- \frac{1}{n-d}d(\ln(\al)) \otimes d(\ln(\al))$ for $n\in[d,\infty]$ as well as $\mathrm{I\!I}$ for the second fundamental form, $H$ for the mean curvature and $H_\al$ for the $\al$-weighted mean curvature $H+\frac{1}{\al} \frac{\partial \al}{\partial N}$ on $\dOm$. For a real-valued function $f$ on $\Om$ we denote by $(f)^-$ and $(f)^+$ respectively the negative and positive part of $f$ and write $\overline{f}=\frac{1}{|\Om|}\int_\Om f d\la$ for its mean value on $\Om$. We define the distance to the boundary function $\rho_{\dOm}:\Om\to\mathbb{R}, \rho_{\dOm}(x):=dist(x,\dOm)$, with distances measured as induced by the Riemannian metric. $\rho_{\dOm}$ is 1-Lipschitz, and in a sufficiently small neighbourhood of the boundary it is smooth. The set where smoothness fails is the so-called cut locus of the boundary, see e.g.~\cite{MR3469435} for more details. Moreover, for a function $h:\mathbb{R}_+\to\mathbb{R}$ we write $h^{-1}(0)$ for the first nonnegative zero of $h$, i.e.\ $h^{-1}(0):=\inf\{t\ge 0: h(t)=0\}$, where $h^{-1}(0)=\infty$ if $h(t)>0$ for all $t\ge 0$, and $h'$ for the first derivative of $h$. We write $C_{p,q}$ for the Sobolev-Poincar\'{e} constants on $\Om$ and $C^{\be,\al}_{p,q}$ for the doubly weighted analogue, characterised respectively as the smallest constants fulfilling 
\begin{align*}
\lp \int_\Om |f-\overline{f}|^p d\lambda\rp^{1/p} &\le C_{p,q} \lp \int_\Om |\nabla f|^q d\la\rp^{1/q} \ \forall f\in W^{1,q}(\Om),\\
\lp \int_\Om |f-\overline{f}|^p \frac{\be}{B}d\lambda\rp^{1/p} &\le C^{\be,\al}_{p,q} \lp \int_\Om |\nabla f|^q \frac{\al}{A} d\la \rp^{1/q} \ \forall f\in W^{1,q}(\Om,\al d\la),
\end{align*}
for $q\in[1,d),p\in[1,\frac{qd}{d-q}]$. The unweighted Sobolev-Poincar\'{e} inequality is known to hold on a smooth, compact $d$-dimensional Riemannian manifold (see e.g.\ \cite{MR1688256}) and analogously the weighted version follows if we assume that $\al,\be$ are such that $W^{1,q}(\Om,\al d\la)$ embeds into $L^p(\Om,\be d\la)$ and $W^{1,q}(\Om,\al d\la)$ embeds compactly into $L^q(\Om,\al d\la)$. Both of these conditions are fulfilled if $|\be|_\infty,\left|\frac{1}{\al}\right|_\infty<\infty$. Furthermore, weighted Sobolev spaces and Sobolev embeddings have been studied for specific types of weights e.g.\ in~\cite{MR2424078} and~\cite{MR664599}. The latter treats the case of weights that are functions of the distance to the boundary $\rho_{\dOm}$. An elementary computation allows to give an upper bound on $C_{p,q}^{\be,\al}$ in terms of $C_{p,q}$ in case $|\be|_\infty,\left|\frac{1}{\al}\right|_\infty<\infty$:
\begin{equation*}
C_{p,q}^{\be,\al} \le  \lp \frac{|\be|_\infty}{B}\rp^{1/p} \lp \left|\frac{A}{\al}\right|_\infty\rp^{1/q} C_{p,q}.
\end{equation*}
\smallskip\\
In this paper we study Poincar\'{e} and logarithmic Sobolev inequalities for Brownian motion on manifolds with `non-standard' boundary behaviour. We briefly describe these two functional inequalities, see e.g.~\cite{bgl} and the references therein for more details. In general, a Markov process corresponding to a Dirichlet form $(\dD(\tilde{\eE}),\tilde{\eE})$ is said to satisfy a Poincar\'{e} inequality with respect to an invariant measure $\nu$ if
there is a constant $C$ such that
\begin{equation*}
\mathrm{Var}_\nu (f) := \int f^2 d\nu - \lp \int f d\nu \rp^2 \le C \tilde{\eE}(f)
\end{equation*}
for all $f\in\dD(\tilde{\eE})$ or for all $f$ in a subset that is dense in $\dD(\tilde{\eE})$ wrt.\ the norm defined by $\lp\tilde{\eE}(\cdot)+ |\cdot|_{2,\nu}^2\rp^{1/2}$.
This is equivalent to the following exponential decay in variance property
\begin{equation*}
\mathrm{Var}_\nu(P_t f) = |P_tf - \nu(f)|_{2,\nu}^2 \le e^{-2t/C} \mathrm{Var}_\nu(f)\ \forall f\in L^2(\nu), t\ge0,
\end{equation*}
where by $(P_t)_{t\ge0}$ we denote the corresponding Markov semigroup. Thus, a Poincar\'{e} inequality implies convergence of $P_t$ to the invariant measure (or equilibrium) $\nu$ with respect to the $|\cdot|_{2,\nu}$ norm and the Poincar\'{e} constant describes the rate of convergence.

Similarly a (tight) logarithmic Sobolev inequality is said to hold if there is a constant $L$ such that
\begin{equation*}
\mathrm{Ent}_\nu(f^2):=\int f^2\ln(f^2) d\nu - \int f^2 d\nu \ln\lp \int f^2 d\nu \rp \le L \tilde{\eE}(f)
\end{equation*}
for all $f\in\dD(\tilde{\eE})$ or for all $f$ in a subset that is dense in $\dD(\tilde{\eE})$ wrt.\ the norm defined by $\lp\tilde{\eE}(\cdot)+ |\cdot|_{2,\nu}^2\rp^{1/2}$.
By applying the logarithmic Sobolev inequality to $f:=1+\eps g$ for $g\in\dD(\tilde{\eE})$ with $\int g d\nu=0$ and letting $\eps$ tend to zero a logarithmic Sobolev inequality with constant $L$ can be seen to imply a Poincar\'{e} inequality with constant $L/2$. The logarithmic Sobolev inequality is thus the stronger notion. The logarithmic Sobolev inequality is equivalent to the following exponential decay in entropy property
\begin{equation*}
\mathrm{Ent}_\nu (P_t f) \le e^{-4t/L} \mathrm{Ent}_\nu (f)\ \forall 0<f\in L^1(\nu), t\ge0.
\end{equation*}
By Pinsker's inequality this implies convergence to the invariant measure in the total variation norm.\\

The central object of interest in this article will be Poincar\'{e} ($C$) and logarithmic Sobolev ($L$) constants in several different settings denoted specifically by
$C_\mu,\ L_\mu,\ \hat{C}_\mu,\ \hat{L}_\mu,\ C_{\laa},\ L_{\laa},\ C_{\sib},\ L_{\sib}$. In order of mention these are associated to the following processes (and infinitesimal generators): Brownian motion with sticky-reflecting boundary diffusion doubly weighted by $\mu$ on $\Om$, Brownian motion with sticky-reflection doubly weighted by $\mu$ on $\Om$, reflected Brownian motion weighted by $\al$ on $\Om$ ($\al$-weighted Laplacian on $\Om$ with Neumann boundary condition), and Brownian motion weighted by $\be$ on $\dOm$ ($\be$-weighted Laplacian on $\dOm$). These eight constants are respectively defined as the smallest constants fulfilling for all $f\in C^1(\Om)$
{\small
\begin{align*}
\var_\mu(f) &\le C_\mu \lp \int_\Om |\nabla f|^2 \al d\la + \int_{\dOm} |\nabla^\tau f|^2 \be d \si \rp, \ &\ent_\mu (f^2) &\le L_\mu \lp \int_\Om |\nabla f|^2 \al d\la + \int_{\dOm} |\nabla^\tau f|^2 \be d \si\rp, \\
\var_\mu(f) &\le \hat{C}_\mu \int_\Om |\nabla f|^2 \al d\la, \ &\ent_\mu (f^2) &\le \hat{L}_\mu \int_\Om |\nabla f|^2 \al d\la, \\
\var_{\laa}(f) &\le C_{\laa} \int_{\Om} |\nabla f|^2 d\laa, \ &\ent_{\laa}(f^2) &\le L_{\laa} \int_{\Om} |\nabla f|^2 d\laa, 
\end{align*}
}
and for all $f\in C^1(\dOm)$
\begin{equation*}
\var_{\sib}(f) \le C_{\sib} \int_{\dOm} |\nabla^\tau f|^2 d\sib, \quad \ent_{\sib}(f^2) \le L_{\sib} \int_{\dOm} |\nabla^\tau f|^2 d\sib.
\end{equation*}

Functional inequalities associated to simple reflected weighted Brownian motion on $\Om$ or weighted Brownian motion on $\dOm$, i.e.\ more standard boundary behaviour, are more well-understood. References providing upper bounds on these Poincar\'{e} and logarithmic Sobolev constants include respectively~\cite{MR889476,MR1262968,MR1450586,MR2158015} and~\cite{MR889476,MR1307413,MR1432586,MR1452551,MR1698947,MR1064685}. For the doubly weighted case with sticky reflection and with or without boundary diffusion it will be part of the results in this paper that Poincar\'{e} and logarithmic Sobolev inequalities hold at all. Additionally we give upper bounds on the associated constants $C_\mu,\ L_\mu,\ \hat{C}_\mu,\ \hat{L}_\mu$ in terms of $C_{\laa},L_{\laa}, C_{\sib},L_{\sib}$. \\
The study of the Poincar\'{e} and logarithmic Sobolev constants $C_\mu,L_\mu,\hat{C}_\mu, \hat{L}_\mu$ will also turn out to be connected to doubly weighted Steklov eigenvalues:
We write $\sigma_{\mu,1}$ for the first positive eigenvalue in the doubly weighted Steklov eigenvalue problem
\begin{equation*}
	\begin{cases}
		\Delta_\al f = 0 &\text{in } \Om^\circ,\\
		\al \frac{\partial f}{\partial N} = \sigma_{\mu} \be f&\text{on } \dOm .
	\end{cases}
\end{equation*}
Furthermore, we write $f_\al$ for the harmonic (wrt.\ $\Delta_\al$) extension to $\Om$ of a function $f$ on $\dOm$ and $D_\al$ for the Dirichlet-to-Neumann operator $D_\al:C^\infty(\dOm)\to C^\infty(\dOm), D_\al f:= \frac{\partial f_\al}{\partial N}$.
By definition the number $\sigma_{\mu,1}$ is then the first eigenvalue of the doubly weighted Dirichlet-to-Neumann operator $\frac{\al}{\be} D_\al$. If $\al=const \in \lp 0, \frac{1}{|\Om|}\rp$ only the boundary condition is weighted. See~\cite{colbgir} for more details on this singly weighted Steklov eigenvalue.

\subsection{Assumptions}

We now state and explain the assumptions on the two weight functions $\al,\be:\Om\to\mathbb{R}$ underlying the rest of the article:\\
\begin{assumption}
\begin{enumerate}[label*=\arabic*.]
\item Let the weights $\al,\be$ be such that $\mu$ is a probability measure. \label{as:mu}
\item Let $\al,\be \in C(\Om), \al>0\ \la$-a.e on $\Om$ and $\be>0\ \si$-a.e. on $\dOm$ such that $\sqrt{\al}\in W^{1,2}(\Om)$ and $\sqrt{\be}\in W^{1,2}(\dOm)$. \label{as:grothaus}
\item Let the weighted Poincar\'{e} and logarithmic Sobolev constants $C_{\laa},L_{\laa},C_{\sib},L_{\sib}$ be finite and their exact values or upper bounds on them be known.  \label{as:standardconstants}
\item Let $\al,\be$ be such that the following Sobolev embeddings and traces hold:
\begin{enumerate}[label*=\arabic*.]
\item $W^{1,2}(\Om,\al d\la)\to L^2(\Om,\al d\la)$ compactly \label{as:embeddings1}
\item $W^{1,2}(\Om,\al d\la)\to L^p(\Om,\be d\la)$ for $p\in\left[2,\frac{2d-2}{d-2}\right]$  \label{as:embeddings2}
\item $W^{1,2}(\dOm,\be d\si)\to L^2(\dOm,\be d\si)$ compactly  \label{as:embeddings3}
\item $W^{1,2}(\Om,\al d\la)\to L^2(\dOm,\be d\si)$ compactly \label{as:embeddings4}
\end{enumerate} 
\end{enumerate}
\end{assumption}

In the above~\ref{as:mu} is assumed as it is customary to formulate Poincar\'{e} and logarithmic Sobolev inequalities with respect to probability measures. Assumption~\ref{as:mu} implies that $A,B\in(0,1)$ and $A+B=1$. 
The conditions written in~\ref{as:grothaus} are simple and sufficient conditions in~\cite{gv}, for which we rely on the construction of doubly weighted Brownian motion with sticky reflection and with or without boundary diffusion. Note that the condition $\sqrt{\be}\in W^{1,2}(\dOm)$ is not used in the case $\delta=0$ without boundary diffusion. We refer to~\cite{gv} for possible weaker conditions than those written in~\ref{as:grothaus} and phrase the following assumptions independent of assumption 2. In~\ref{as:standardconstants} the finiteness condition suffices in order to establish that Poincar\'{e} and logarithmic Sobolev inequalities hold for doubly weighted Brownian motion with sticky reflection and with or without boundary diffusion, while the exact values/upper bounds are needed in order to obtain explicit upper bounds on $C_\mu,L_\mu,\hat{C}_\mu,\hat{L}_\mu$ (see Propositions~\ref{prop:varinterpol},~\ref{prop:pcsrbm},~\ref{prop:logsobinterpol},~\ref{prop:logsobinterpolwobd}).
Assumptions~\ref{as:embeddings1} and~\ref{as:embeddings2} are sufficient in order to ensure the existence of the doubly weighted Sobolev-Poincar\'{e} constants $C^{\al,\be}_{p,2}$, where the latter will quantitatively enter the upper bounds on $L_\mu,\hat{L}_\mu$. Furthermore~\ref{as:embeddings1},~\ref{as:embeddings3} and~\ref{as:embeddings4} will be used to establish that Poincar\'{e} inequalities hold at all for doubly weighted Brownian motion with sticky reflection and with or without boundary diffusion (see Propositions~\ref{prop:wexpc} and~\ref{prop:expcwobd}) but do not enter the results quantitatively. Moreover~\ref{as:embeddings4} will be used to show feasibility of the approach used below in order to give quantitative upper bounds on $C_\mu,L_\mu,\hat{C}_\mu,\hat{L}_\mu$ (see Remark~\ref{rem:exk1k2k}), in order to actually show upper bounds on them in the case of equal weights $\al=\be$ (see Theorem~\ref{thm:equalweights}) as well as when establishing the connection with the first nontrivial doubly weighted Steklov eigenvalue (see Lemma~\ref{lem:steklovvarchar}), but again the Sobolev trace operator does not enter the quantitative results. 

\section{Poincar\'{e} Inequality}\label{sec:PI}

In this section we generally consider the case $\delta=1$ including weighted boundary diffusion and only discuss the case $\delta=0$ without boundary diffusion in subsection~\ref{ssec:pcwobd}.
Here we will show that a Poincar\'{e} inequality holds for Brownian motion with sticky-reflecting boundary diffusion doubly weighted by $\mu$ on $\Om$ and give upper bounds on the Poincar\'{e} constant $C_\mu$. The bounds will depend on the Poincar\'{e} constants $C_{\laa}$ and $C_{\sib}$.

\begin{remark}
Note that for the definition of the diffusion process as well as the functional inequalities the function $\be$ is only relevant on $\dOm$. However, similarly to~\cite{gv} the results presented in the following are formulated in terms of a function $\be$ defined on the whole $\Om$. Thus in order to improve the results we could optimise over extensions of $\be\vert_{\dOm}$, but refrain from implementing this here.
\end{remark}

\begin{proposition}\label{prop:wexpc}
There is a constant $C_\mu\in\mathbb{R}$ such that for all $f\in C^1(\Om)$
\begin{equation*}
\mathrm{Var}_{\mu}(f) \le C_\mu \eE_\mu(f).
\end{equation*}
\end{proposition}
\begin{proof}
Assume the claim is false, i.e.\ there is a sequence $(f_k)_{k\in\mathbb{N}}\subset C^1(\Om)$ such that 
\begin{equation}\label{eq:PIexistencew}
\mathrm{Var}_{\mu}(f_k)>k \eE_\mu(f_k).
\end{equation} Assume without loss of generality that $\int_\Om f_k d\mu =0$ and $\mathrm{Var}_{\mu}(f_k)=1$ for all $k\in\mathbb{N}$. Then inequality~\eqref{eq:PIexistencew} implies that
\begin{equation*}
\eE_\mu(f_k)<\frac{1}{k}\ \Leftrightarrow\ \int_\Om |\nabla f_k|^2 \al d\la + \int_{\dOm} |\nabla^\tau f_k|^2 \be d\si <\frac{1}{k}.
\end{equation*}
Furthermore, 
\begin{equation*}
1= \mathrm{Var}_{\mu}(f_k) = A \mathrm{Var}_{\laa}(f_k) + B\mathrm{Var}_{\sib}(f_k) +AB\lp\int_\Om f_k d\laa - \int_{\dOm} f_k d\sib\rp^2 
\end{equation*}
together with $\int_\Om f_k d\mu =0$ implies boundedness of the sequence in $|\cdot|_{2,\laa}$ and $|\cdot|_{2,\sib}$:
\begin{align*}
|f_k|^2_{2,\laa} &= \mathrm{Var}_{\laa}(f_k) + \lp\int_\Om f_k d\laa\rp^2=  \mathrm{Var}_{\laa}(f_k) + \lp B\int_\Om f_k d\laa+A\int_\Om f_k d\laa\rp^2\\
	&= \mathrm{Var}_{\laa}(f_k) + \lp B\int_\Om f_k d\laa- B\int_{\dOm} f_k d\sib\rp^2 \\
	&= \mathrm{Var}_{\laa}(f_k) + B^2 \lp\int_\Om f_k d\laa- \int_{\dOm} f_k d\sib\rp^2 \le \frac{1}{A} + \frac{B}{A},\\
|f_k|^2_{2,\sib}&= \mathrm{Var}_{\sib}(f_k) + \lp \int_{\dOm} f_k d\sib \rp^2 \\
	&=  \mathrm{Var}_{\sib}(f_k) + \lp A\int_{\dOm} f_k d\sib +B\int_{\dOm} f_k d\sib\rp^2\\
	&= \mathrm{Var}_{\sib}(f_k) + \lp A\int_{\dOm} f_kd\sib - A\int_{\Om} f_k d\laa \rp^2 \\
	&= \mathrm{Var}_{\sib}(f_k) + A^2\lp \int_{\dOm} f_kd\sib - \int_{\Om} f_k d\laa \rp^2 \le \frac{1}{B} + \frac{A}{B}.
\end{align*}
Thus, $(f_k)_k$ is a bounded sequence in $W^{1,2}(\Om,\al d\la)$ and (by restriction to $\dOm$) in $W^{1,2}(\dOm,\be d\si)$. The following is up to taking subsequences: As $W^{1,2}(\Om,\al d\la)$ and $W^{1,2}(\dOm,\be d\si)$ are Hilbert spaces and thus reflexive $(f_k)_k$ converges weakly to an element $f$ in $W^{1,2}(\Om,\al d\la)$ and (by restriction to $\dOm$) in $W^{1,2}(\dOm,\be d\si)$. As $W^{1,2}(\Om,\al d\la)$ is compactly embedded in $L^2(\Om,\al d\la)$ and $W^{1,2}(\dOm,\be d\si)$ is compactly embedded in $L^2(\dOm,\be d\si)$, $(f_k)_k$ converges to $f$ in $L^2(\Om,\al d\la)$ and in $L^2(\dOm,\be d\si)$. Thus, 
\begin{equation*}
1= \lim_{k\to\infty} \mathrm{Var}_{\mu}(f_k) = \lim_{k\to\infty} |f_k|^2_{2,\mu} = |f|^2_{2,\mu}.
\end{equation*} 
Furthermore, $\int_\Om f d\mu=0$. Since $f\in W^{1,2}(\Om,\al d\la)$ with $\nabla f =0$, $f$ has to be constant $\la$ almost surely. If $f\equiv c$ almost surely for $c\neq0$, then also $f\vert_{\dOm}\equiv c$ and $\int_\Om f d\mu=c\neq 0$. Thus, $f\equiv 0$ which is a contradiction to $|f|_{2,\mu}=1$.
\end{proof}

The basis of the explicit upper bounds on $C_\mu$ will be the following Proposition, which is a doubly weighted analogue of a result in~\cite{mvr} and bounds $C_\mu$ using in particular $C_{\laa}$ and $C_{\sib}$.

\begin{proposition}\label{prop:varinterpol}
Assume there are constants $K_1,K_2,K_{\dOm,\Om}$ such that for all $f\in C^1(\Om)$
\begin{align}
	\var_{\sib}(f) &\le K_{\dOm,\Om} \int_\Om |\nabla f|^2 d\laa \label{eq:varboundary},\\
	\lp \int_\Om f d\laa - \int_{\dOm} f d\sib \rp^2 &\le K_1 \int_\Om |\nabla f|^2 d\laa + K_2\int_{\dOm} |\nabla^\tau f|^2 d\sib, \label{eq:mixedterm}
\end{align}
	then
	\begin{equation*}
		C_\mu \le \max \lp C_{\laa} + BK_1, A K_2, \frac{\frac{B}{A}K_{\dOm,\Om}( C_{\sib} + A K_2)+ (C_{\laa} + BK_1)C_{\sib}}{\frac{B}{A}K_{\dOm,\Om} + C_{\sib}}\rp.
	\end{equation*}
\end{proposition}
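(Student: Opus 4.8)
The plan is to exploit the mixture structure of the invariant measure. Since $d\mu = \al\,d\la + \be\,d\si = A\,d\laa + B\,d\sib$ with $A+B=1$, the measure $\mu$ is a convex combination of the two probability measures $\laa$ and $\sib$, and I would begin by applying the law of total variance for such a mixture. Writing $m_\al := \int_\Om f\,d\laa$ and $m_\be := \int_{\dOm} f\,d\sib$, a direct computation (expand $\int (f-\int f\,d\mu)^2\,d\mu$ and use $\int f\,d\mu = A m_\al + B m_\be$ together with $m_\al - \int f\,d\mu = B(m_\al-m_\be)$, $m_\be - \int f\,d\mu = -A(m_\al-m_\be)$) gives the identity
\begin{equation*}
	Var_\mu(f) = A\,Var_{\laa}(f) + B\,Var_{\sib}(f) + AB\lp m_\al - m_\be\rp^2 .
\end{equation*}
The cross term is exactly the quantity controlled by \eqref{eq:mixedterm}, and the Dirichlet form splits accordingly as $\eE(f) = A\int_\Om|\nabla f|^2\,d\laa + B\int_{\dOm}|\nabla^\tau f|^2\,d\sib$; for brevity I set $E_\Om := \int_\Om|\nabla f|^2\,d\laa$ and $E_{\dOm} := \int_{\dOm}|\nabla^\tau f|^2\,d\sib$, so that $\eE(f) = A E_\Om + B E_{\dOm}$.

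Next I would bound the three terms. The interior variance is handled directly by the Neumann Poincar\'e inequality $Var_{\laa}(f)\le C_{\laa}E_\Om$, and the cross term by \eqref{eq:mixedterm}, $(m_\al-m_\be)^2\le K_1 E_\Om + K_2 E_{\dOm}$. The boundary variance $Var_{\sib}(f)$ is where the interpolation enters, since it admits two competing bounds: $Var_{\sib}(f)\le C_{\sib}E_{\dOm}$ and, via \eqref{eq:varboundary}, $Var_{\sib}(f)\le K_{\dOm,\Om}E_\Om$. For a parameter $t\in[0,1]$ I would write $Var_{\sib}(f) = t\,Var_{\sib}(f) + (1-t)\,Var_{\sib}(f)$ and estimate the two copies by the two different bounds, obtaining $Var_{\sib}(f)\le tC_{\sib}E_{\dOm} + (1-t)K_{\dOm,\Om}E_\Om$.

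Substituting and collecting the coefficients of $E_\Om$ and $E_{\dOm}$, the desired inequality $Var_\mu(f)\le C_\mu\,\eE(f) = C_\mu(A E_\Om + B E_{\dOm})$ reduces to requiring
\begin{align*}
	C_{\laa} + BK_1 + \tfrac{B}{A}(1-t)K_{\dOm,\Om} &\le C_\mu ,\\
	AK_2 + tC_{\sib} &\le C_\mu .
\end{align*}
Hence any $C_\mu \ge \min_{t\in[0,1]}\max\lp g_1(t), g_2(t)\rp$ is admissible, where $g_1(t):= C_{\laa}+BK_1+\tfrac{B}{A}(1-t)K_{\dOm,\Om}$ is decreasing and $g_2(t):= AK_2 + tC_{\sib}$ is increasing in $t$. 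The last step is the elementary one-variable optimization: because $g_1$ decreases and $g_2$ increases, the minimax is attained at the unique crossing point $t^\ast = (C_{\laa}+BK_1+\tfrac{B}{A}K_{\dOm,\Om}-AK_2)/(\tfrac{B}{A}K_{\dOm,\Om}+C_{\sib})$ when $t^\ast\in(0,1)$, where the common value simplifies to the third entry of the asserted maximum, and otherwise at an endpoint, giving $g_1(1)=C_{\laa}+BK_1$ when $t^\ast\ge1$ or $g_2(0)=AK_2$ when $t^\ast\le0$. In every case the resulting value is at most the maximum of the three quantities, which yields the claim.

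The computations are all elementary; the one genuinely load-bearing step is the variance decomposition, as it is what turns the single constant $C_\mu$ into a separable problem in the interior energy $E_\Om$ and boundary energy $E_{\dOm}$ and isolates precisely the cross term governed by \eqref{eq:mixedterm}. The only points requiring care afterwards are the bookkeeping of the factors $A,B$ when passing between $\eE$ and the normalized energies, and verifying that the crossing value of $\max(g_1,g_2)$ collapses to the stated rational expression.
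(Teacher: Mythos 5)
Your proof is correct and follows essentially the same route as the paper: the same mixture decomposition $Var_\mu(f) = A\,Var_{\laa}(f) + B\,Var_{\sib}(f) + AB\lp \int_\Om f\,d\laa - \int_{\dOm} f\,d\sib\rp^2$, the same $t$-interpolation of the two bounds on $Var_{\sib}(f)$, and the same one-dimensional minimax over $t\in[0,1]$. The only differences are cosmetic (your $t$ is the paper's $1-t$, and you argue the optimization via the crossing point rather than the paper's closed-form formula for $\inf_t \max(a+bt,\,c-dt)$), and your crossing value indeed collapses to the stated rational expression.
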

\begin{proof}
\begin{align*}
\var_\mu(f) &= \int_\Om f^2 \al d\la + \int_{\dOm} f^2 \be d \si - \lp \int_\Om f \al d \la + \int_{\dOm} f \be d\si \rp^2\\
	&= A\var_{\laa}(f) + B\var_{\sib}(f) +A\lp \int_\Om f d\laa \rp^2 + B\lp \int_{\dOm} f d\sib \rp^2\\
	& - \lp \int_\Om f \al d \la + \int_{\dOm} f \be d\si \rp^2\\
	&= A\var_{\laa}(f) + B\var_{\sib}(f) + \lp \lp\frac{1-A}{A}\rp^{1/2}\int_\Om f \al d\la - \lp\frac{1-B}{B}\rp^{1/2}\int_{\dOm} f \be d\si \rp^2 \\
	&=  A\var_{\laa}(f) + B\var_{\sib}(f) + AB\lp \int_\Om f d\laa - \int_{\dOm} f d\sib \rp^2 \\
	&\le \lp C_{\laa} + t \frac{B}{A}K_{\dOm,\Om} + BK_1 \rp \int_\Om |\nabla f|^2 \al d\la + \lp (1-t)C_{\sib} + A K_2\rp \int_{\dOm} |\nabla^\tau f|^2 \be d\si.
\end{align*}
Thus
\begin{align*}
	C_\mu &\le \inf_{t\in[0,1]} \max \lp C_{\laa} + t \frac{B}{A}K_{\dOm,\Om} + BK_1 , (1-t)C_{\sib} + A K_2 \rp\\
	&= \inf_{t\in[0,1]} \max \lp C_{\laa} + BK_1 + t \frac{B}{A}K_{\dOm,\Om} , C_{\sib} + AK_2 - tC_{\sib} \rp.
\end{align*}
Using that for $a,b,c,d>0$
\begin{equation*}
	\inf_{t\in[0,1]} \max(a+bt,c-dt) = \begin{cases} a &\text{if } c-a<0,\\
		c-d &\text{if } c-a>b+d,\\ 
		\frac{bc+ad}{b+d} &\text{if } 0\le c-a \le b+d,\end{cases}
\end{equation*}
it follows that 
\begin{equation*}
	C_\mu \le \begin{cases} C_{\laa} + BK_1 &\text{if } C_{\sib} + AK_2 - \lp C_{\laa} + BK_1  \rp <0, \\
		A K_2 &\text{if } C_{\sib} + A K_2 - \lp C_{\laa} + BK_1  \rp > \frac{B}{A}K_{\dOm,\Om} +C_{\sib}, \\
		\frac{\frac{B}{A}K_{\dOm,\Om}( C_{\sib} + A K_2)+ (C_{\laa} + BK_1)C_{\sib}}{\frac{B}{A}K_{\dOm,\Om} + C_{\sib}} &\text{if } 0\le C_{\sib} + A 	K_2 - \lp C_{\laa} + BK_1  \rp \le \frac{B}{A}K_{\dOm,\Om} +C_{\sib}. \end{cases}
\end{equation*}
\end{proof}

In the above in order to bound $\var_{\sib}(f)$ we interpolate between the two possible upper bounds given via inequality~\eqref{eq:varboundary} and via the Poincar\'{e} inequality for the $\beta$-weighted Laplacian on $\dOm$. 
While using inequality~\eqref{eq:varboundary} is not necessary to obtain an upper bound on $C_\mu$ at all in the case $\delta=1$, it will turn out to improve the upper bound.

\begin{remark}\label{rem:exk1k2k}
Note that the existence of constants $K_{\dOm,\Om}, K_1, K_2$ fulfilling inequalities~\eqref{eq:varboundary} and~\eqref{eq:mixedterm} follows by continuity of the Sobolev trace operator $\vert_{\dOm}$ and the Poincar\'{e} inequality for the $\al$-weighted Neumann Laplacian on $\Om$. More precisely, denoting the operator norm of $\vert_{\dOm}$ by $K$ and assuming without loss of generality that $\int_\Om f d\laa=0$ in inequalities~\eqref{eq:varboundary} and~\eqref{eq:mixedterm} it holds
{\small
\begin{align*}
\mathrm{Var}_{\sib} (f) \le \int_{\dOm} f^2 d\sib &\le \frac{A K^2}{B}\frac{|\Om|}{|\dOm|} \lp \int_\Om |\nabla f|^2 d\laa + \int_\Om f^2 d\laa \rp  \le \frac{A K^2}{B}\frac{|\Om|}{|\dOm|}(1+C_{\laa}) \int_\Om |\nabla f|^2 d\laa,\\
\lp \int_{\dOm} f d\sib \rp^2 \le \int_{\dOm} f^2 d\sib &\le \frac{A K^2}{B}\frac{|\Om|}{|\dOm|} \lp \int_\Om |\nabla f|^2 d\laa + \int_\Om f^2 d\laa \rp \le  \frac{A K^2}{B}\frac{|\Om|}{|\dOm|}(1+C_{\laa}) \int_\Om |\nabla f|^2 d\laa,
\end{align*}
}
i.e.\ inequalities~\eqref{eq:varboundary} and ~\eqref{eq:mixedterm} hold with 
\begin{equation*}
K_{\dOm,\Om}=\frac{A K^2}{B}\frac{|\Om|}{|\dOm|}(1+C_{\laa}) ,\ K_1=\frac{AK^2}{B}\frac{|\Om|}{|\dOm|}(1+C_{\laa}), \ K_2=0.
\end{equation*}
Optimal values in Sobolev trace inequalities have been studied e.g.\ in~\cite{MR1443055, MR1978428, MR1971310,MR2258478, MR2384747}. However, the exact value of $K$ or explicit bounds in terms of the geometry of the manifold seem to not be known in general. Therefore, the approach presented in the following does not make use of the Sobolev trace operator quantitatively. Instead the majority of this section will consist of finding explicit values for $K_{\dOm,\Om}, K_1, K_2$ without using the trace operator, and in turn we will obtain a bound on $K$ as a byproduct later, see Remark~\ref{rem:Sobtracebound} and Proposition~\ref{prop:sobtracebound}.
\end{remark}

The optimal constant $K_{\dOm,\Om}$ fulfilling \eqref{eq:varboundary} is strongly related to the first nontrivial doubly weighted Steklov eigenvalue $\sigma_{\mu,1}$. The variational characterisation of the first non-trivial eigenvalue $\sigma_{\mu,1}$ is given by the following lemma:

\begin{lemma}\label{lem:steklovvarchar}
\begin{equation*}
	\sigma_{\mu,1} = \frac{A}{B} \inf_{\substack{0\neq f\in C^1(\Om)\\ \int_{\dOm} f d\sib=0}}\frac{\int_\Om |\nabla f|^2 d\laa}{\int_{\dOm} f^2 d\sib}.
\end{equation*}
\end{lemma}
\begin{proof}
We first show existence of a minimiser for the right-hand side in $W^{1,2}(\Om,\al d\la)$. Denote by $\eta$ the right-hand side of the statement and let $(f_n)_n$ be a sequence in $C^1(\Om)$ such that $\int_{\dOm} f_n d\sib=0$ $\forall n\in\mathbb{N}$ and
\begin{equation*}
\lim_{n\to\infty} \frac{A}{B}\frac{\int_\Om |\nabla f_n|^2 d\laa}{\int_{\dOm} f_n^2 d\sib} = \eta.
\end{equation*}
Assume without loss of generality that $\int_\Om f_n^2 d\mu=1$ $\forall n\in\mathbb{N}$. 
Then for $\eps>0$ and $n$ sufficiently large
\begin{align*}
\int_\Om |\nabla f_n|^2 \al d\la \le \lp\eta + \eps\rp \int_{\dOm} f_n^2 \be d\si \le \eta + \eps \text{ and } \int_\Om f_n^2 \al d\la \le 1.
\end{align*}
Thus, $(f_n)_n$ is a bounded sequence in $W^{1,2}(\Om,\al d\la)$. The following arguments are up to taking subsequences: As a bounded sequence in the Hilbert space $W^{1,2}(\Om,\al d\la)$ $(f_n)_n$ converges weakly to an element $f\in W^{1,2}(\Om,\al d\la)$. Due to compactness of the Sobolev embedding $W^{1,2}(\Om,\al d\la)\to L^2(\Om,\al d\la)$ $(f_n)_n$ converges also in $L^2(\Om,\al d\la)$ and due to compactness of the trace operator $W^{1,2}(\Om,\al d\la)\to L^2(\dOm,\be d\si)$ the traces also converge in $L^2(\dOm,\be d\si)$. 
Thus, $\int_{\dOm} f d\sib=0$ and the value $\eta$ is attained for $f$ as
\begin{equation*}
\eta \le \frac{A}{B}\frac{\int_\Om |\nabla f|^2 d\laa}{\int_{\dOm} f^2 d\sib} \le \lim_{n\to\infty} \frac{A}{B}\frac{\int_\Om |\nabla f_n|^2 d\laa}{\int_{\dOm} f_n^2 d\sib} = \eta.
\end{equation*}
The latter is due to $C^1(\Om)$ functions being dense in $W^{1,2}(\Om,\al d\la)$ and due to lower semicontinuity of $|\nabla \cdot|_{2, \al d\la}$ for weak convergence in $W^{1,2}(\Om,\al d\la)$.\\
Now let $f\in W^{1,2}(\Om,\al d\la)$ with $\int_{\dOm} f d\sib=0$ be a minimiser for $\eta$, i.e.\
\begin{equation}\label{eq:fmin}
\eta= \frac{A}{B} \frac{\int_\Om |\nabla f|^2d\laa}{\int_{\dOm} f^2 d\sib}.
\end{equation} We consider the variation in direction $\xi$
\begin{equation*}
\frac{d}{d\eps} \frac{\int_{\Om} |\nabla(f+\eps\xi)|^2 d\laa}{\int_{\dOm} (f+\eps\xi)^2 d\sib}\vert_{\eps=0} = \frac{ 2\int_\Om \nabla\xi\cdot\nabla fd\laa\int_{\dOm} f^2 d\sib - 2\int_\Om |\nabla f|^2 d\laa \int_{\dOm} f\xi d\sib }{\lp \int_{\dOm} f^2 d\sib\rp^2} \stackrel{!}{=} 0.
\end{equation*} 
This implies by inserting equation~\eqref{eq:fmin} that
\begin{equation}\label{eq:fmin2}
\int_{\dOm} f^2 d\sib \left[ \int_\Om \nabla f\cdot\nabla\xi d\laa - \eta \frac{B}{A}\int_{\dOm} f \xi d\sib \right] = 0,
\end{equation}
and thus in case $\xi\vert_{\dOm}=0$
\begin{equation*}
\int_\Om \nabla f \cdot \nabla \xi d\laa = 0\ \Rightarrow \ \Delta_\al f = 0 \text{ in } \Om^\circ.
\end{equation*}
Furthermore, since
\begin{equation*}
\int_{\Om} \nabla f \cdot \nabla \xi \frac{\al}{A} d\la = - \int_\Om \xi \Delta_\al f \frac{\al}{A} d\la + \int_{\dOm} \xi \frac{\partial f}{\partial N} \frac{\al}{A} d\si = \int_{\dOm} \xi \frac{\partial f}{\partial N} \frac{\al}{A} d\si 
\end{equation*}
we have by equation~\eqref{eq:fmin2}
\begin{equation*}
 \int_{\dOm} \xi \frac{\partial f}{\partial N} \frac{\al}{A} d\si = \frac{\eta}{A}\int_{\dOm} f \xi \be d\si
\end{equation*}
and thus 
\begin{equation*}
\frac{\partial f}{\partial N} \al = \eta f \be \text{ on } \dOm,
\end{equation*}
i.e.\ $f$ is an eigenfunction for the first non-trivial eigenvalue in the doubly-weighted Steklov problem with eigenvalue $\eta\ge\sigma_{\mu,1}$. On the other hand inserting an eigenfunction for $\sigma_{\mu,1}$ in the quotient on the right-hand side of the statement of the Lemma implies $\eta\le \sigma_{\mu,1}$.
\end{proof}

\begin{remark}\label{rem:steklov}
If we write $K_{\dOm,\Om}$ for the optimal constant fulfilling~\eqref{eq:varboundary}, we may now note that
\begin{equation}\label{eq:steklov}
	K_{\dOm,\Om} =  \sup_{f\in C^1(\Om)} \frac{\var_{\sib}(f)}{\int_\Om |\nabla f|^2d\laa} = \sup_{\substack{f\in C^1(\Om)\\ \int_{\dOm}f d\sib=0}} \frac{\int_{\dOm} f^2 d\sib}{\int_\Om |\nabla f|^2d\laa} = \frac{A}{B}(\sigma_{\mu,1})^{-1}. 
\end{equation}
Therefore finding upper bounds on $K_{\dOm,\Om}$ corresponds to finding lower bounds on the first doubly weighted Steklov eigenvalue. 
\end{remark}

In the remaining part of this section we proceed to find explicit constants $K_{\dOm,\Om},K_1,K_2$ fulfilling conditions \eqref{eq:varboundary} and \eqref{eq:mixedterm}. Note that we could additionally optimise over pairs $K_1, K_2$ fulfilling~\eqref{eq:mixedterm} in order to improve the upper bound on $C_\mu$, but refrain from doing so here.

\subsection{Coinciding weights, positive curvature and convex boundary}\label{ssec:coinweights}

We first consider the case where $\al\vert_{\dOm}=\be\vert_{\dOm}$ and assume positive $n$-weighted Ricci curvature and nonnegative second fundamental form on the boundary in order to proceed analogously to~\cite{mvr}. The main results providing upper bounds on $C_\mu$ in this setting will be via the combination of Proposition~\ref{prop:varinterpol} and Theorem~\ref{thm:equalweights} as well as Theorem~\ref{thm:directboundcoincweights}.
The more general setting will be considered in subsection~\ref{ssec:genweights}.

\begin{theorem}\label{thm:equalweights}
Assume that $\be=\al$ on $\dOm$, $\mathrm{Ric}_{\al,n}\ge k_{\al,n}g$ for some $n\in[d,\infty]$, $k_{\al,n}>0$, $\mathrm{I\!I} \ge k_2 id$ with $k_2\ge0$ and $\int_{\dOm} \mathrm{H}_\al \al d\si\ge0$ on $\dOm$. Then inequality~\eqref{eq:mixedterm} is fulfilled with $K_2=0$ and
\begin{equation*}
	K_1 = \frac{n-1}{nk_{\al,n}}.
\end{equation*}
\end{theorem}

\begin{proof}
\allowdisplaybreaks
For $f\in C^1(\Om)$ we aim to bound from below
\begin{equation*}
	\frac{\int_\Om |\nabla f|^2 d\laa}{\lp \int_\Om f d\laa - \int_{\dOm} f d\sib \rp^2}.
\end{equation*}
\begin{equation*}
	\inf_{f\in C^1(\Om)} \frac{\int_\Om |\nabla f|^2 d\laa}{\lp \int_\Om f d\laa - \int_{\dOm} f d\sib \rp^2} = \inf_{\stackrel{f\in C^1(\Om)}{ \int_{\dOm} f d\sib=0}} \frac{\int_\Om |\nabla f|^2 d\laa}{\lp \int_\Om f d\laa \rp^2} \ge  \inf_{\stackrel{f\in C^1(\Om)}{\int_{\dOm} f d\sib=0}} \frac{\int_\Om |\nabla f|^2 d\laa}{\int_\Om f^2 d\laa} =:\eta.
\end{equation*}
Let $(f_k)_k$ be a sequence in $C^1(\Om)$ such that $\int_{\dOm} f_k d\sib=0$ for all $k\in\mathbb{N}$ and such that
\begin{equation*}
\lim_{k\to\infty} \frac{\int_\Om |\nabla f_k|^2 d\laa}{\int_\Om f_k^2 d\laa} = \eta.
\end{equation*}
We may assume without loss of generality that $\int_\Om f_k^2 d\laa=1$. The following arguments are up to taking subsequences: As a bounded sequence in the Hilbert space $W^{1,2}(\Om,\al d\la)$ $f_k$ converges weakly to an element $f\in W^{1,2}(\Om,\al d\la)$. By compactness of the embedding $W^{1,2}(\Om,\al d\la)\to L^2(\Om,\al d\la)$ and of the trace operator $W^{1,2}(\Om,\al d\la)\to L^2(\dOm,\be d\si)$ $(f_k)_k$ converges also in $L^2(\Om,\al d\la)$ and the traces converge in $L^2(\dOm,\be d\si)$. Thus, $\int_{\dOm} f d\sib=0$ and the value $\eta$ is attained for $f$ as
\begin{equation*}
\eta \le \frac{\int_\Om |\nabla f|^2 d\laa}{\int_\Om f^2 d\laa} \le \lim_{k\to\infty} \frac{\int_\Om |\nabla f_k|^2 d\laa}{\int_\Om f_k^2 d\laa} = \eta.
\end{equation*}
The latter is due to $C^1(\Om)$ functions being dense in $W^{1,2}(\Om,\al d\la)$ and due to the lower semicontinuity of $|\nabla \cdot|_{2, \al d \la}$ for weak convergence in $W^{1,2}(\Om,\al d\la)$. Varying $f$ in the direction of $g$ results in
\begin{align*}
	\eta \int_\Om fg d\laa &= \int_\Om \nabla f\cdot \nabla g d\laa  \\
	\Leftrightarrow\ \eta \int_\Om fg d\laa &= -\int_\Om g \Delta f d\laa + \int_{\dOm} g \frac{\partial f}{\partial N} \frac{\al}{A} d\si - \int_\Om g \nabla f \cdot \frac{\nabla \al}{\al} d\laa 
\end{align*}
for every $g\in C^1(\Om)$ with $\int_{\dOm} g d\sib =0$. By choosing $g\in C^\infty_0(\Om)$ we may deduce that 
\begin{equation*}-\Delta f - \nabla f\cdot\frac{\nabla \al}{\al}=\eta f \text{ in } \Om^\circ.
\end{equation*} This implies that $\int_{\dOm} g \frac{\partial f}{\partial N}\al d\si =0$ for $g$ with zero $\sib$-mean, and thus
\begin{equation*}
	\int_{\dOm} \frac{\partial f}{\partial N} \lp g- \int_{\dOm} g d\sib \rp \al d\si =0\ \Leftrightarrow\ \int_{\dOm} \lp \frac{\al}{\be}\frac{\partial f}{\partial N} - \int_{\dOm} \frac{\al}{\be}\frac{\partial f}{\partial N} d\sib \rp g \be d\si =0
\end{equation*}
for every $g\in C^1(\Om)$. Thus $\frac{\al}{\be}\frac{\partial f}{\partial N}$ is constant on $\dOm$. Summed up $f$ satisfies
\begin{equation*}
	\begin{cases} \Delta f + \nabla f\cdot \frac{\nabla \al}{\al} = -\eta f & \text{ in } \Om^\circ,\\
		\frac{\al}{\be}\frac{\partial f}{\partial N} = const. & \text{ on } \dOm,\\
		\int_{\dOm} f d\sib =0.\end{cases}
\end{equation*} 

We write the generalised Reilly formula, cf.~\cite{milman}:

\begin{align}\label{eq:genrei}
	\int_\Om \lp \Delta f + \frac{\nabla \al }{\al}\cdot \nabla f \rp^2 \al d\la &= \int_\Om \|\nabla^2 f\|^2 \al d\la + \int_\Om \mathrm{Ric}_\al (\nabla f, \nabla f) \al d\la + \int_{\dOm} \mathrm{H}_\al \lp \partial_N f\rp^2 \al d\si\\ 
	&+ \int_{\dOm} \mathrm{I\!I}(\nabla^\tau f, \nabla^\tau f) \al d\si - 2\int_{\dOm} \nabla^\tau (\partial_N f)\cdot  \nabla^\tau f \al d\si \notag.
\end{align}
	
Furthermore, (cf.~\cite{milman}) for $n\in[d,\infty]$:
\begin{equation}\label{eq:hesric}
	\mathrm{Ric}_\al (\nabla f,\nabla f) + \|\nabla^2 f\|^2 \ge \mathrm{Ric}_{\al,n}(\nabla f,\nabla f) + \frac{1}{n}\lp \Delta f + \frac{\nabla \al}{\al}\cdot \nabla f \rp^2.
\end{equation}

For the term on the left-hand side of the generalised Reilly formula we may calculate as follows
\begin{align*}
	\int_\Om \lp \Delta f + \frac{\nabla \al }{\al}\cdot \nabla f \rp^2 \al d\la &= -\eta \int_\Om f \lp \Delta f + \frac{\nabla \al }{\al}\cdot \nabla f \rp \al d\la = \eta \int_\Om \nabla f\cdot\nabla f \al d\la - \eta\int_{\dOm} f \frac{\partial f}{\partial N} \al d\si \\
	&= \eta \int_\Om |\nabla f|^2 \al d\la -c\eta\int f\be d\si = \eta \int_\Om |\nabla f|^2 \al d\la.
\end{align*}
	
Concerning the right-hand side we have (here $\al=\be$ on $\dOm$ is used for the second and more importantly the fourth term)
\begin{align*}
	\int_\Om \|\nabla^2 f\|^2 \al d\la + \int_\Om \mathrm{Ric}_\al (\nabla f, \nabla f) \al d\la &\ge \int_\Om \mathrm{Ric}_{\al,n}(\nabla f,\nabla f) \al d\la + \int_\Om \frac{1}{n}\lp\Delta f + \frac{\nabla \al}{\al}\cdot \nabla f\rp^2 \al d\la \\
	&\ge k_{\al,n}\int_\Om |\nabla f|^2 \al d\la + \frac{\eta}{n} \int_\Om |\nabla f|^2 \al d\la,\\
	\int_{\dOm} \mathrm{H}_\al \lp \partial_N f\rp^2 \al d\si &= \int_{\dOm} \lp \mathrm{H} + \frac{\partial \al}{\partial N}\frac{1}{\al}\rp \lp \partial_N f\rp^2 \al d\si \ge 0,\\
	\int_{\dOm} \mathrm{I\!I}(\nabla^\tau f, \nabla^\tau f) \al d\si &\ge 0,\\
	\int_{\dOm} \nabla^\tau (\partial_N f)\cdot \nabla^\tau f \al d\si &= 0.
\end{align*}
Thus 
\begin{equation*}
	\frac{n-1}{n}  \eta \int_\Om |\nabla f|^2 \al d\la \ge k_{\al,n}\int_\Om |\nabla f|^2 \al d\la \ \Leftrightarrow \ \eta \ge \frac{n}{n-1} k_{\al,n}.
\end{equation*}
\end{proof}

Furthermore, due to~\eqref{eq:steklov} we may obtain an upper bound on $K_{\dOm,\Om}$ using existing lower bounds on the first nontrivial weighted Steklov eigenvalue, see e.g.~\cite[Theorem 1.5]{batista},~\cite[Theorem 1.10]{MR4723999}. Thus via Proposition~\ref{prop:varinterpol} an upper bound on $C_\mu$ follows.\\

Alternatively a direct approach to bound $C_\mu$ from above is presented in the next Theorem:

\begin{theorem}\label{thm:directboundcoincweights}
Assume that $\be=\al$ on $\dOm$, $\mathrm{Ric}_{\al,n}\ge k_{\al,n}g$ for some $n\in[d,\infty]$, $k_{\al,n}>0$, $\mathrm{I\!I}\ge k_2 id$ with $k_2>0$ and $H_\al\ge0$ on $\dOm$. Then it holds
\begin{equation*}
	C_\mu \le \max\lp \frac{3n-1}{n k_2} , \frac{n-1}{n k_{\al,n}} \rp.
\end{equation*}
\end{theorem}

\begin{proof}
\allowdisplaybreaks
Let $\eta$ be the first nontrivial eigenvalue for the infinitesimal generator $L$ and $f$ an associated eigenfunction, i.e.\
\begin{equation*}
	\begin{cases}
		\Delta f + \frac{\nabla \al}{\al}\cdot \nabla f = -\eta f &\text{in } \Om^\circ, \\
		\Delta^\tau f + \frac{\nabla^\tau \be}{\be}\cdot \nabla^\tau f -\frac{\al}{\be}\partial_N f = -\eta f &\text{on } \dOm.
	\end{cases}
\end{equation*}
Now applying the generalised Reilly formula~\eqref{eq:genrei} and additionally~\eqref{eq:hesric} to the function $f$ gives
\begin{align*}
	\frac{n-1}{n} \int_\Om \lp \Delta f + \frac{\nabla \al}{\al}\cdot \nabla f \rp^2 \al d\la &\ge k_{\al,n} \int_\Om |\nabla f|^2 \al d\la + \int_{\dOm} H_\al \lp \partial_N f\rp^2 \al d\si \\
	&+ \int_{\dOm} \mathrm{I\!I} (\nabla^\tau f, \nabla^\tau f) \al d\si - 2\int_{\dOm} \nabla^\tau (\partial_N f)\cdot  \nabla^\tau f \al d\si.
\end{align*}
For the left-hand side we calculate
\begin{align*}
	\int_\Om \lp \Delta f + \frac{\nabla \al}{\al}\cdot \nabla f \rp^2 \al d\la &= -\eta \int_\Om f\lp \Delta f + \frac{\nabla \al}{\al}\cdot \nabla f \rp \al d\la \\
	&= \eta \int_\Om |\nabla f|^2 \al d\la - \eta \int_{\dOm} f \frac{\al}{\be} \partial_N f \be d\si \\
	&= \eta \int_\Om |\nabla f|^2 \al d\la - \eta \int_{\dOm} f (\Delta^\tau f + \frac{\nabla^\tau \be}{\be}\cdot \nabla^\tau f  +\eta f) \be d\si \\
	&= \eta \int_\Om |\nabla f|^2 \al d\la + \eta \int_{\dOm} |\nabla^\tau f|^2 \be d\si - \eta^2 \int_{\dOm} f^2\be d\si \\
	&\le  \eta \int_\Om |\nabla f|^2 \al d\la + \eta \int_{\dOm} |\nabla^\tau f|^2 \be d\si.
\end{align*} 
Concerning the right-hand side (here we use $\al=\be$ on $\dOm$ for the third term)
\begin{align*}
	\int_{\dOm} \mathrm{H}_\al \lp \partial_N f\rp^2 \al d\si  &= \int_{\dOm} \lp\mathrm{H} + \frac{1}{\al}\partial_N \al\rp\cdot (\partial_N f)^2 \al d\si \ge 0, \\
	\int_{\dOm} \mathrm{I\!I} (\nabla^\tau f,\nabla^\tau f)\al d\si &\ge k_2 \int_{\dOm} |\nabla^\tau f|^2 \al d\si, \\
	\int_{\dOm} \nabla^\tau (\partial_N f)\cdot  \nabla^\tau f \al d\si &= -\int_{\dOm} \partial_N f \lp \Delta^\tau f + \frac{\nabla^\tau \al}{\al} \cdot \nabla^\tau f\rp\al d\si \\
	&= -\int_{\dOm} \lp \Delta^\tau f + \frac{\nabla^\tau \be}{\be}\cdot \nabla^\tau f  +\eta f \rp  \lp \Delta^\tau f + \frac{\nabla^\tau \al}{\al} \cdot \nabla^\tau f\rp\be d\si \\
	&= -\int_{\dOm} \lp \Delta^\tau f + \frac{\nabla^\tau \be}{\be}\cdot \nabla^\tau f \rp^2 \be d\si + \eta \int_{\dOm} |\nabla^\tau f|^2 \be d\si \\
	&\le \eta \int_{\dOm} |\nabla^\tau f|^2 \be d\si .
\end{align*}
Combining these we get
\begin{equation}\label{eq:anotherineq}
	\lp \frac{n-1}{n} \eta - k_{\al,n}\rp \int_\Om |\nabla f|^2 \al d\la \ge \lp -\frac{n-1}{n}\eta -2\eta + k_2\rp  \int_{\dOm} |\nabla^\tau f|^2 \be d\si.
\end{equation}
Now either the first factor on the right-hand side of inequality~\eqref{eq:anotherineq} is nonpositive, i.e.\
\begin{equation*}
	\lp -\frac{n-1}{n}\eta -2\eta + k_2\rp  = -\frac{3n-1}{n}\eta + k_2 \le 0 \Leftrightarrow \eta \ge \frac{n}{3n-1}k_2,
\end{equation*}
or if the first factor on the right-hand side of inequality~\eqref{eq:anotherineq} is nonnegative, then the first factor on the left-hand side of inequality~\eqref{eq:anotherineq} also has to be nonnegative, i.e.\
\begin{equation*}
	\frac{n-1}{n}\eta-k_{\al,n} \ge 0\ \Leftrightarrow\ \eta \ge k_{\al,n} \frac{n}{n-1}.
\end{equation*}
Thus
\begin{equation*}
	\eta \ge \min\lp \frac{n}{3n-1}k_2 , k_{\al,n} \frac{n}{n-1} \rp.
\end{equation*}
\end{proof}

\subsection{General weights and bounded curvature}\label{ssec:genweights}

We now allow for more general weights which do not have to coincide on $\dOm$ and assume bounds of arbitrary sign on the Ricci and sectional curvature on $\Om$ as well as on the second fundamental form on $\dOm$. We again proceed to find constants $K_{\dOm,\Om},K_1,K_2$ fulfilling conditions \eqref{eq:varboundary} and \eqref{eq:mixedterm}. As remarked above we could optimise over pairs $K_1, K_2$ fulfilling~\eqref{eq:mixedterm} in order to improve the upper bound on $C_\mu$. However, the choice of $K_1$ and $K_2=0$ presented next will also turn out to be useful when considering the case without boundary diffusion in subsection~\ref{ssec:pcwobd}. The main result providing upper bounds on $C_\mu$ in this setting will be the combination of Proposition~\ref{prop:varinterpol} and Theorem~\ref{thm:explconst}.

\begin{proposition}\label{prop:mixedterm}
Let $\varphi\in C^1(\Om)$ such that $\frac{\partial \varphi}{\partial N}\vert_{\dOm}=\pm 1$ and $\nabla \varphi$ is Lipschitz continuous on $\Om$. Then inequality~\eqref{eq:mixedterm} is fulfilled with $K_2=0$ and 
\begin{equation*}
K_1 = \frac{A}{B^2} \left|\frac{\be}{\al}\right|_{\infty}\lp \inf_{\eps\in(0,\infty)} (1+\eps) C_{\laa} \left|\Delta \varphi + \frac{\nabla \be}{\be}\cdot \nabla \varphi \right|_{2,\be d\la}^2 + (1+\eps^{-1}) |\nabla \varphi|_{2,\be d\la}^2\rp.
\end{equation*}
\end{proposition}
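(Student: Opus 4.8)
The plan is to turn the boundary average $\int_{\dOm} f\,d\sib$ into a bulk integral over $\Om$ by integrating by parts against the vector field $f\be\na\ffi$, exploiting that $\partial_N\ffi=\pm1$ is \emph{constant} on $\dOm$ so that the arising boundary term reproduces exactly the $\be$-weighted boundary measure. Since both $\laa$ and $\sib$ are probability measures, the left-hand side of~\eqref{eq:mixedterm} is invariant under $f\mapsto f+c$; so I would first normalise $\int_\Om f\,d\laa=0$, reducing the task to bounding $\big(\int_{\dOm}f\,d\sib\big)^2$. This normalisation is what will later license the Poincar\'e inequality.

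The key step is the integration by parts. Applying the divergence theorem to $f\be\na\ffi$, expanding $\mathrm{div}(f\be\na\ffi)=\be\,\na f\cdot\na\ffi+f\,\na\be\cdot\na\ffi+f\be\,\Delta\ffi$, and using $\langle\na\ffi,N\rangle=\partial_N\ffi=\pm1$ on $\dOm$, I obtain
\[
\pm\int_{\dOm} f\be\,d\si=\int_\Om \be\,\na f\cdot\na\ffi\,d\la+\int_\Om f\be\Big(\Delta\ffi+\tfrac{\na\be}{\be}\cdot\na\ffi\Big)\,d\la.
\]
Writing $I_1:=\tfrac1B\int_\Om\be\,\na f\cdot\na\ffi\,d\la$ and $I_2:=\tfrac1B\int_\Om f\be\big(\Delta\ffi+\tfrac{\na\be}{\be}\cdot\na\ffi\big)\,d\la$, this says $\int_{\dOm}f\,d\sib=\pm(I_1+I_2)$, so the quantity to bound is $(I_1+I_2)^2$, and the $\pm$ disappears upon squaring.

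I would then split via Young's inequality $(I_1+I_2)^2\le(1+\eps^{-1})I_1^2+(1+\eps)I_2^2$ for $\eps\in(0,\infty)$, to be optimised at the end. For $I_1$, Cauchy--Schwarz after factoring $\be\,\na f\cdot\na\ffi=(\sqrt\al\,\na f)\cdot(\tfrac{\be}{\sqrt\al}\na\ffi)$ bounds $I_1^2$ by $\tfrac{A}{B^2}\big(\int_\Om|\na f|^2\,d\laa\big)\big(\int_\Om\tfrac{\be^2}{\al}|\na\ffi|^2\,d\la\big)$, and pulling out $|\be/\al|_\infty$ turns the last factor into $|\be/\al|_\infty\,|\na\ffi|_{2,\be\la}^2$. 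For $I_2$, the same split gives $\tfrac{A}{B^2}\big(\int_\Om f^2\,d\laa\big)|\be/\al|_\infty\big|\Delta\ffi+\tfrac{\na\be}{\be}\cdot\na\ffi\big|_{2,\be\la}^2$, and here the zero-mean normalisation lets me apply $\int_\Om f^2\,d\laa=Var_{\laa}(f)\le C_{\laa}\int_\Om|\na f|^2\,d\laa$. Collecting the common factor $\int_\Om|\na f|^2\,d\laa$ and taking $\inf_\eps$ yields precisely the stated $K_1$, while $K_2=0$ because no tangential gradient of $f$ ever appears.

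The main technical point to verify is the legitimacy of the integration by parts under the given regularity rather than the estimates, which are routine. The hypothesis that $\na\ffi$ is Lipschitz gives $\ffi\in C^{1,1}$, hence $\Delta\ffi\in L^\infty(\Om)$; and the standing assumption $\sqrt\be\in H^{1,2}$ is exactly what ensures $\be\in H^{1,2}(\Om)$ with $\tfrac{\na\be}{\be}\cdot\na\ffi\in L^2(\be\la)$, since $\tfrac{|\na\be|^2}{\be}=4|\na\sqrt\be|^2$ and $\na\ffi$ is bounded on the compact $\Om$. This simultaneously justifies the weak product rule, controls the boundary term, and guarantees that the resulting constant $K_1$ is finite.
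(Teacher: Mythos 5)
Your proof is correct and takes essentially the same route as the paper's: after normalising $\int_\Om f\,d\laa=0$, both arguments apply the divergence theorem to the vector field $f\be\nabla\varphi$, split the resulting two bulk terms by Young's inequality in $\eps$ (with $(1+\eps)$ on the $f\lp\Delta\varphi+\frac{\nabla\be}{\be}\cdot\nabla\varphi\rp$ term), estimate each by Cauchy--Schwarz together with the factor $\left|\frac{\be}{\al}\right|_\infty$, and close with the Poincar\'e inequality for $\laa$. The only differences are cosmetic — you shuffle the weights inside Cauchy--Schwarz rather than after it — plus your added regularity justification for the integration by parts, which the paper omits.
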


\begin{proof}
\allowdisplaybreaks
Let $f\in C^1(\Om)$. Without loss of generality we may assume $\int_\Om f d\laa=0$. For $\eps\in(0,\infty)$ we may then compute as follows:
\begin{align*}
&\lp \int_\Om f d\laa - \int_{\dOm} f d\sib\rp^2 = \lp \int_{\dOm} f d\sib\rp^2 = \lp \int_{\dOm} f\frac{\partial \varphi}{\partial N} d\sib \rp^2 \\
		&= \frac{1}{B^2}\lp \int_\Om f \be \Delta \varphi d\la + \int_\Om \nabla \varphi \cdot (\be \nabla f + f \nabla \be) d\la\rp^2 \\
		&= \frac{1}{B^2}\lp \int_\Om \lp f\Delta \varphi +f \frac{\nabla \be}{\be}\cdot \nabla \varphi\rp \be d \la + \int_\Om \nabla f \cdot \nabla \varphi \be d\la \rp^2 \\
		&\le \frac{1}{B^2}\lp(1+\eps) \lp \int_\Om f \lp \Delta \varphi + \frac{\nabla \be}{\be}\cdot \nabla \varphi\rp \be d\la \rp^2 + (1+\eps^{-1}) \lp \int_\Om \nabla f\cdot \nabla \varphi \be d\la \rp^2\rp \\
		&\le\frac{1}{B^2}\lp (1+\eps)  \int_\Om \lp \Delta \varphi + \frac{\nabla \be}{\be}\cdot \nabla \varphi\rp^2 \be d\la \int_\Om f^2 \be d\la + (1+\eps^{-1}) \int_\Om |\nabla f|^2 \be d\la  \int_\Om |\nabla \varphi|^2 \be d\la\rp \\
		&\le\frac{1}{B^2}\biggl( (1+\eps) \left|\frac{\be}{\al}\right|_\infty \int_\Om \lp \Delta \varphi + \frac{\nabla \be}{\be}\cdot \nabla \varphi\rp^2 \be d\la \int_\Om f^2 \al d\la + \\
			&(1+\eps^{-1}) \left|\frac{\be}{\al}\right|_\infty \int_\Om |\nabla f|^2 \al d\la  \int_\Om |\nabla \varphi|^2 \be d\la\biggr) \\
		&\le \frac{1}{B^2}\biggl( (1+\eps) \left|\frac{\be}{\al}\right|_\infty \int_\Om \lp \Delta \varphi + \frac{\nabla \be}{\be}\cdot \nabla \varphi\rp^2 \be d\la\ C_{\laa} \int_\Om |\nabla f|^2 \al d\la+ \\
			&(1+\eps^{-1})  \left|\frac{\be}{\al}\right|_\infty  \int_\Om |\nabla f|^2 \al d\la \int_\Om |\nabla \varphi|^2 \be d\la \biggr) \\
		&\le \frac{A}{B^2}\left|\frac{\be}{\al}\right|_\infty\lp (1+\eps) C_{\laa} \int_\Om \lp \Delta \varphi + \frac{\nabla \be}{\be}\cdot \nabla \varphi\rp^2 \be d\la  + (1+\eps^{-1})   \int_\Om |\nabla \varphi|^2 \be d\la \rp \int_\Om |\nabla f|^2 d\laa.
\end{align*}
\end{proof}

In the foregoing Proposition~\ref{prop:mixedterm} Lipschitz continuity of $\nabla \varphi$ is assumed in order to ensure that $\Delta \varphi$ is well-defined almost everywhere via Rademacher's theorem. We later construct $\varphi$ such that $\nabla \varphi$ is in fact Lipschitz continuous, however in general the assumption could be weakened. The same is true in the following Proposition for $\rho$ instead of $\varphi$.

\begin{proposition}\label{prop:varboundary}
Let $\rho\in C^1(\Om)$ such that $\frac{\partial \rho}{\partial N}\vert_{\dOm}=- 1$ and $\nabla \rho$ is Lipschitz continuous on $\Om$. Then inequality~\eqref{eq:varboundary} is fulfilled with 
\begin{equation*}
K_{\dOm,\Om} = \frac{A}{B}\left|\frac{\be}{\al}\right|_{\infty}\lp C_{\laa}\left|\lp \Delta \rho +\frac{\nabla \be}{\be} \cdot \nabla \rho \rp^-\right|_\infty   + 2C_{\laa}^{(1/2)}|\nabla \rho|_\infty \rp.
\end{equation*} 
\end{proposition}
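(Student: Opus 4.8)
The plan is to bound the boundary variance $Var_{\sib}(f)$ directly by the interior Dirichlet energy, turning the resulting boundary integral into interior integrals via the auxiliary function $\rho$, in the same spirit as the proof of Proposition~\ref{prop:mixedterm} but now keeping the quadratic factor $f^2$ on the boundary rather than the linear one.

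First I would center the function. Since both sides of~\eqref{eq:varboundary} are invariant under adding a constant to $f$ (the variance is translation invariant and $\nabla$ annihilates constants), I may assume without loss of generality that $\int_\Om f\,d\laa=0$. Bounding the variance by the second moment then gives $Var_{\sib}(f)\le\int_{\dOm}f^2\,d\sib=\tfrac1B\int_{\dOm}f^2\be\,d\si$, so it suffices to control $\int_{\dOm}f^2\be\,d\si$. Next I would use $\frac{\partial\rho}{\partial N}=-1$ to write $\int_{\dOm}f^2\be\,d\si=-\int_{\dOm}f^2\be\frac{\partial\rho}{\partial N}\,d\si$ and apply the divergence theorem to the vector field $f^2\be\,\nabla\rho$ (legitimate since $\nabla\rho$ is Lipschitz, hence $\Delta\rho\in L^\infty$, and $f\in C^1$). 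Expanding $\nabla(f^2\be)=2f\be\nabla f+f^2\nabla\be$ yields
\[
\int_{\dOm}f^2\be\,d\si=-\int_\Om 2f\be\,\nabla f\cdot\nabla\rho\,d\la-\int_\Om f^2\lp\Delta\rho+\frac{\nabla\be}{\be}\cdot\nabla\rho\rp\be\,d\la.
\]

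The third step is the estimation of the two interior terms. For the zeroth-order term I would use $-x\le x^-$ to replace $-\lp\Delta\rho+\frac{\nabla\be}{\be}\cdot\nabla\rho\rp$ by its negative part, pull out its $L^\infty$-norm, pass from $\be\la$ to $\al\la$ at the cost of $|\be/\al|_\infty$, and invoke the Poincar\'e inequality $\int_\Om f^2\,d\laa=Var_{\laa}(f)\le C_{\laa}\int_\Om|\nabla f|^2\,d\laa$ (using $\int_\Om f\,d\laa=0$); this produces the summand $C_{\laa}|(\cdots)^-|_\infty$. For the first-order term I would estimate $|2f\be\nabla f\cdot\nabla\rho|\le 2|\nabla\rho|_\infty\be|f||\nabla f|$, apply Cauchy--Schwarz in $L^2(\be\la)$, and bound the two resulting factors separately as $\int_\Om f^2\be\,d\la\le|\be/\al|_\infty A\,C_{\laa}\int_\Om|\nabla f|^2\,d\laa$ (again via the Poincar\'e inequality) and $\int_\Om|\nabla f|^2\be\,d\la\le|\be/\al|_\infty A\int_\Om|\nabla f|^2\,d\laa$; the square root of the product produces the asymmetric power $C_{\laa}^{1/2}$ and the summand $2C_{\laa}^{1/2}|\nabla\rho|_\infty$. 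Collecting both contributions and multiplying by $\tfrac1B$ gives precisely the claimed $K_{\dOm,\Om}$.

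The only genuinely delicate point is the initial reduction: unlike in Proposition~\ref{prop:mixedterm}, where centering was a mere convenience, here it is \emph{essential}, because $\int_{\dOm}f^2\,d\sib$ can be controlled by the interior energy only after subtracting the interior mean $\int_\Om f\,d\laa$ (not the boundary mean). One must therefore apply $Var_{\sib}(f)\le\int_{\dOm}f^2\,d\sib$ to the centered function and let the Poincar\'e inequality for $\laa$ close the estimate. The remaining manipulations---Cauchy--Schwarz and the $L^\infty$ bounds via $|\be/\al|_\infty$---are routine, with the negative-part trick being the only other place where the sign structure of $\Delta\rho+\frac{\nabla\be}{\be}\cdot\nabla\rho$ requires care.
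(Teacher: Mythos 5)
Your proof is correct and follows essentially the same route as the paper's: center $f$ at its $\laa$-mean, bound $Var_{\sib}(f)$ by the second moment, convert the boundary integral via the divergence theorem applied to $f^2\be\nabla\rho$, and close the estimate with the negative-part bound, the factor $\left|\frac{\be}{\al}\right|_\infty$, Cauchy--Schwarz and the Poincar\'e inequality for $\laa$. The only (inconsequential) difference is that you apply Cauchy--Schwarz in $L^2(\be\la)$ before switching to the $\al$-weight, whereas the paper switches weights first and then applies Cauchy--Schwarz in $L^2(\al\la)$; both orderings yield the identical constant $K_{\dOm,\Om}$.
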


\begin{proof}
Let $f\in C^1(\Om)$. Without loss of generality we may assume that $\int_\Om f d\laa =0$.
\begin{align}
\var_{\sib}(f) &= \int_{\dOm} f^2 d \sib - \lp \int_{\dOm} f d\sib \rp^2 \le \int_{\dOm} f^2 d \sib = -\frac{1}{B} \int_{\dOm} f^2 \frac{\partial \rho}{\partial N} \be d\si \notag\\
	&= -\frac{1}{B}\int_\Om f^2 \Delta \rho \be d\la - \frac{1}{B}\int_\Om \nabla \rho \cdot (2f\nabla f \be + f^2 \nabla \be) d\la \notag\\
	&= -\frac{1}{B}\int_\Om f^2 \lp \Delta \rho +\frac{\nabla \be}{\be} \cdot \nabla \rho \rp \be d\la - \frac{2}{B}\int_\Om f\nabla f \cdot \nabla \rho \be d\la \notag\\
	&\le \frac{1}{B}\int_\Om f^2 \lp \Delta \rho +\frac{\nabla \be}{\be} \cdot \nabla \rho \rp^- \be d \la + \frac{2}{B}\int_\Om |f||\nabla f| |\nabla \rho| \be d\la \notag\\
	&\le \frac{1}{B}\left|\lp \Delta \rho +\frac{\nabla \be}{\be} \cdot \nabla \rho \rp^-\right|_\infty \left|\frac{\be}{\al}\right|_\infty \int_\Om f^2 \al d \la +  \frac{2}{B}|\nabla \rho|_\infty \left|\frac{\be}{\al}\right|_\infty\int_\Om |f||\nabla f| \al d\la \notag\\
	&\le \frac{1}{B}\left|\lp \Delta \rho +\frac{\nabla \be}{\be} \cdot \nabla \rho \rp^-\right|_\infty \left|\frac{\be}{\al}\right|_\infty \int_\Om f^2 \al d \la \notag\\
		&+ \frac{2}{B}|\nabla \rho|_\infty \left|\frac{\be}{\al}\right|_\infty \lp \int_\Om f^2 \al d\la \int_\Om |\nabla f|^2 \al d\la \rp^{1/2} \label{eq:someeq1}\\
	&\le \frac{C_{\laa}}{B}\left|\lp \Delta \rho +\frac{\nabla \be}{\be} \cdot \nabla \rho \rp^-\right|_\infty \left|\frac{\be}{\al}\right|_\infty \int_\Om |\nabla f|^2 \al d \la + \frac{2 C_{\laa}^{1/2}}{B}|\nabla \rho|_\infty \left|\frac{\be}{\al}\right|_\infty\int_\Om |\nabla f|^2 \al d\la \notag\\
	&=\frac{A}{B}\left|\frac{\be}{\al}\right|_\infty\lp C_{\laa}\left|\lp \Delta \rho +\frac{\nabla \be}{\be} \cdot \nabla \rho \rp^-\right|_\infty   + 2C_{\laa}^{(1/2)}|\nabla \rho|_\infty \rp \int_\Om |\nabla f|^2 d\laa. \label{eq:someeq2}
\end{align}
\end{proof}

If in the present setting $\be\vert_{\dOm}=\al\vert_{\dOm}$, then we can assume $\be=\al$ on $ \Om$ and the factors $\left|\frac{\be}{\al}\right|_\infty$ appearing in Proposition~\ref{prop:mixedterm} and Proposition~\ref{prop:varboundary} equal 1.\\

By specifying functions $\varphi$ and $\rho$ with the properties requested in Propositions~\ref{prop:mixedterm} and~\ref{prop:varboundary} we may now obtain an explicit upper bound for $C_\mu$. As in~\cite{brw} both functions will be chosen of the form $\psi \circ \rho_{\dOm}$ for some appropriate function $\psi$.\\
We use for $k,\gamma\in\mathbb{R}$ the function
\begin{equation}\label{eq:hfctn}
h:[0,\infty)\to\mathbb{R}, \ h(t):=\begin{cases}
	\cos(\sqrt{k}t)-\frac{\gamma}{\sqrt{k}}\sin(\sqrt{k}t) & k\ge0, \\
	\cosh(\sqrt{-k}t)-\frac{\gamma}{\sqrt{-k}}\sinh(\sqrt{-k}t) & k<0.
\end{cases}
\end{equation}

\begin{lemma}\label{lem:rhol2}
Let $k_1,k_2\in\mathbb{R}$ such that $\mathrm{Ric}\ge k_1 (d-1),\  \mathrm{sect} \le k_2$ and $\gamma_1,\gamma_2\in\mathbb{R}$ such that $\gamma_1 id \le \mathrm{I\!I} \le \gamma_2 id$ and let $h_i, i=1,2$ be as defined above in~\eqref{eq:hfctn} with $k=k_i$ and $\gamma=\gamma_i$. For $t_0\in (0,h_2^{-1}(0))$ we construct a function  $\varphi\in C^1(\Om)$ (depending on $t_0$) such that $\frac{\partial \varphi}{\partial N}\vert_{\dOm}=-1$ and $\nabla \varphi$ is Lipschitz continuous on $\Om$ and 
\small \begin{align*}
|\nabla \varphi|_{2,\be d\la}^2&=  \int_{\{\rho_{\dOm}\le t_0\}} \lp 1-\frac{\rho_{\dOm}}{t_0}\rp^2 \be d\la, \\
\left|\Delta \varphi + \frac{\nabla \be}{\be}\cdot\nabla\varphi \right|_{2,\be d\la}^2 &\le \int_{\{\rho_{\dOm} \le t_0\}} \biggl[ \lp \lp (d-1)\frac{h_2'}{h_2}(\rho_{\dOm}) -\frac{1}{t_0-\rho_{\dOm}}\rp^-\rp^2 
		+ \lp \lp (d-1)\frac{h_1'}{h_1}(\rho_{\dOm})  -\frac{1}{t_0-\rho_{\dOm}}\rp^+\rp^2 \\
		&+ 2\lp  (d-1)\frac{h_2'}{h_2}(\rho_{\dOm}) -\frac{1}{t_0-\rho_{\dOm}}\rp^- \frac{|\nabla \be|}{\be} +2\lp (d-1)\frac{h_1'}{h_1}(\rho_{\dOm})  -\frac{1}{t_0-\rho_{\dOm}}\rp^+ \frac{|\nabla \be|}{\be} \\
		&+ \frac{|\nabla \be|^2}{\be^2} \biggr] \lp 1-\frac{\rho_{\dOm}}{t_0}\rp^2 \be d\la.
\end{align*}
\end{lemma}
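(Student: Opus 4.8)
The plan is to take $\varphi=\psi\circ\rho_{\dOm}$ for the radial profile $\psi(t):=\int_0^t\lp 1-s/t_0\rp^+\,ds$, so that $\psi'(t)=(1-t/t_0)^+$ is nonnegative and decreasing, equals $1$ at $t=0$, vanishes for $t\ge t_0$, and satisfies $\psi''=-1/t_0$ on $(0,t_0)$. Since $\rho_{\dOm}$ is the distance to $\dOm$, increasing into the interior, one has $\nabla\rho_{\dOm}=-N$ and $\partial\rho_{\dOm}/\partial N=-1$ on $\dOm$, whence $\partial\varphi/\partial N=\psi'(0)\,\partial\rho_{\dOm}/\partial N=-1$ as required. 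Because $|\nabla\rho_{\dOm}|=1$ wherever $\rho_{\dOm}$ is differentiable, $|\nabla\varphi|^2=\psi'(\rho_{\dOm})^2=(1-\rho_{\dOm}/t_0)^2$ on $\{\rho_{\dOm}<t_0\}$ and $0$ beyond, which integrated against $\be\,d\la$ yields the first (exact) identity.

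Next I would compute, on the set where $\rho_{\dOm}$ is smooth,
\[
\Delta\varphi+\frac{\nabla\be}{\be}\cdot\nabla\varphi
=\psi''(\rho_{\dOm})+\psi'(\rho_{\dOm})\Big(\Delta\rho_{\dOm}+\frac{\nabla\be}{\be}\cdot\nabla\rho_{\dOm}\Big)
=\Big(1-\tfrac{\rho_{\dOm}}{t_0}\Big)\Big[\Delta\rho_{\dOm}-\frac{1}{t_0-\rho_{\dOm}}+\frac{\nabla\be}{\be}\cdot\nabla\rho_{\dOm}\Big],
\]
using $\psi''=-1/t_0=(1-\rho_{\dOm}/t_0)\cdot(-1)/(t_0-\rho_{\dOm})$. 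The geometry enters only through a two-sided Laplacian comparison for $\rho_{\dOm}$, obtained from the Riccati equation for the shape operator of the level sets $\{\rho_{\dOm}=t\}$ with initial mean curvature fixed by $II$: the lower bound $Ric\ge k_1(d-1)$ together with $II\ge\gamma_1\,id$ gives, via the trace Cauchy--Schwarz and the scalar Riccati inequality, the upper bound $\Delta\rho_{\dOm}\le(d-1)\frac{h_1'}{h_1}(\rho_{\dOm})$, while $sect\le k_2$ with $II\le\gamma_2\,id$ gives, by matrix Riccati comparison, the lower bound $\Delta\rho_{\dOm}\ge(d-1)\frac{h_2'}{h_2}(\rho_{\dOm})$; here $(d-1)h_i'/h_i$ solves the comparison Riccati equation since $h_i''+k_ih_i=0$, $h_i(0)=1$, $h_i'(0)=-\gamma_i$. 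The restriction $t_0<h_2^{-1}(0)\le h_1^{-1}(0)$ (using $k_1\le k_2$, $\gamma_1\le\gamma_2$) keeps both $h_i>0$ on $[0,t_0]$, so the comparison terms are finite.

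Writing $G:=\Delta\rho_{\dOm}-\frac{1}{t_0-\rho_{\dOm}}$ and $W:=\frac{\nabla\be}{\be}\cdot\nabla\rho_{\dOm}$, the comparison reads $a\le G\le b$ with $a:=(d-1)\frac{h_2'}{h_2}(\rho_{\dOm})-\frac{1}{t_0-\rho_{\dOm}}$ and $b:=(d-1)\frac{h_1'}{h_1}(\rho_{\dOm})-\frac{1}{t_0-\rho_{\dOm}}$. The remainder is elementary: from $a\le G\le b$ one checks $G^2\le(a^-)^2+(b^+)^2$ and $|G|\le a^-+b^+$, while $|W|\le|\nabla\be|/\be$ as $|\nabla\rho_{\dOm}|=1$. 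Expanding $(G+W)^2=G^2+2GW+W^2$ and inserting these bounds reproduces exactly the bracketed integrand; multiplying by $(1-\rho_{\dOm}/t_0)^2\be$ and integrating over $\{\rho_{\dOm}\le t_0\}$ gives the claimed bound on $\left|\Delta\varphi+\frac{\nabla\be}{\be}\cdot\nabla\varphi\right|_{2,\be\la}^2$.

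I expect the main obstacle to be regularity rather than the curvature estimates: $\rho_{\dOm}$ is only globally Lipschitz and fails to be smooth on the cut locus of $\dOm$, where $\nabla\rho_{\dOm}$ jumps, so a priori $\nabla\varphi$ need not be continuous there and the pointwise computation is valid only on the open full-measure set $\Om\setminus(\dOm\cup\mathrm{Cut}(\dOm))$. The condition $t_0<h_2^{-1}(0)$ excludes focal points before $t_0$, so the comparison does not blow up, but cut points must still be handled; the standard remedy is that, since $\psi$ is nondecreasing, $\varphi=\psi\circ\rho_{\dOm}$ is locally a minimum of smooth functions near a cut point, hence semiconcave, so the singular part of its distributional Laplacian is a nonpositive measure supported on a null set. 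Consequently the two displayed $L^2(\be\la)$ quantities may be evaluated on the smooth region, where the computation and comparison hold, and the Lipschitz-gradient property needed downstream in Proposition~\ref{prop:mixedterm} and Proposition~\ref{prop:varboundary} either follows after restricting $t_0$ below the cut distance or is recovered by an approximation argument in which the favorable sign of the singular part preserves all the inequalities.
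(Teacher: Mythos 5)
Your proof is correct and follows essentially the same route as the paper's: the same test function $\varphi=\int_0^{\rho_{\dOm}}\left(1-s/t_0\right)^+ds$, the same two-sided Laplacian comparison for $\rho_{\dOm}$ under the stated curvature and second-fundamental-form bounds, and the same expansion of $\left(\Delta\varphi+\frac{\nabla\be}{\be}\cdot\nabla\varphi\right)^2$, with your $a\le G\le b$ bookkeeping being just a streamlined rewriting of the paper's estimates on $(\Delta\varphi)^{\pm}$. Your closing discussion of the cut locus of $\dOm$ addresses a regularity point that the paper (which simply asserts Lipschitz continuity of $\nabla\varphi$ and cites the comparison theorem) passes over silently, so it is a careful addition rather than a deviation.
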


\begin{proof} It is easy to see that $h_2^{-1}(0)\le h_1^{-1}(0)$. Let $\rho_{\dOm}$ be the distance function to the boundary. By the Laplacian comparison theorem
\begin{align}
\Delta \rho_{\dOm} \le \frac{(d-1)h_1'}{h_1}(\rho_{\dOm})\text { on }  \{\rho_{\dOm} < h_1^{-1}(0)\}, \label{eq:laplacecomp1}\\
\Delta \rho_{\dOm} \ge \frac{(d-1)h_2'}{h_2}(\rho_{\dOm})\text{ on } \{\rho_{\dOm} < h_2^{-1}(0)\} \label{eq:laplacecomp2},
\end{align}
see~\cite{brw}.\\
Now for $t_0\in (0,h_2^{-1}(0))$ define
\begin{equation*}
\varphi:= \int_0^{\rho_{\dOm}} \lp 1-\frac{s}{t_0}\rp^+ ds.
\end{equation*}
It holds
\begin{equation*}
\nabla \varphi (x)= \begin{cases} \nabla \rho_{\dOm}(x)\cdot \lp 1-\frac{\rho_{\dOm}(x)}{t_0} \rp  &\rho_{\dOm}(x) \le t_0, \\
	0  &\text{ else},
	\end{cases}
\end{equation*}
and thus $\frac{\partial \varphi}{\partial N}\vert_{\dOm}=-1$ and $\nabla \varphi$ is Lipschitz continuous. Furthermore,
\begin{equation*}
\Delta \varphi (x)= \begin{cases}\Delta \rho_{\dOm}(x) \lp 1- \frac{\rho_{\dOm}(x)}{t_0}\rp -\frac{1}{t_0} & \rho_{\dOm}(x)\le t_0, \\
	0 &\text{ else}.
	\end{cases}
\end{equation*}
We then get
\begin{equation*}
\int_\Om |\nabla \varphi|^2 \be d\la = \int_{\{\rho_{\dOm}\le t_0\}} \lp 1-\frac{\rho_{\dOm}}{t_0}\rp^2 |\nabla \rho_{\dOm}|^2 \be d\la =  \int_{\{\rho_{\dOm}\le t_0\}} \lp 1-\frac{\rho_{\dOm}}{t_0}\rp^2 \be d\la.
\end{equation*}
Moreover,
\begin{equation*}
\int_\Om \lp \Delta \varphi + \frac{\nabla \be}{\be}\cdot \nabla\varphi\rp^2 \be d \la = \int_\Om \lp (\Delta \varphi)^2 + 2\Delta \varphi \frac{\nabla \be}{\be} \cdot \nabla \varphi + \frac{(\nabla \be\cdot\nabla\varphi)^2}{\be^2}\rp \be d\la.
\end{equation*}
By inequalities~\eqref{eq:laplacecomp1} and~\eqref{eq:laplacecomp2} on $\{\rho_{\dOm}\le t_0\}$
\begin{align*}
\Delta\varphi \ge  (d-1)\frac{h_2'}{h_2}(\rho_{\dOm}) \lp 1-\frac{\rho_{\dOm}}{t_0} \rp -\frac{1}{t_0}\ &\Rightarrow\ (\Delta\varphi)^- \le  \lp  (d-1)\frac{h_2'}{h_2}(\rho_{\dOm}) \lp 1-\frac{\rho_{\dOm}}{t_0} \rp -\frac{1}{t_0}\rp^-,\\
\Delta\varphi \le  (d-1)\frac{h_1'}{h_1}(\rho_{\dOm}) \lp 1-\frac{\rho_{\dOm}}{t_0} \rp -\frac{1}{t_0}\ &\Rightarrow\ (\Delta\varphi)^+ \le \lp (d-1)\frac{h_1'}{h_1}(\rho_{\dOm})   \lp 1-\frac{\rho_{\dOm}}{t_0} \rp -\frac{1}{t_0}\rp^+.
\end{align*}
Thus
\begin{align*}
&\int_\Om  \lp (\Delta \varphi)^2 + 2\Delta \varphi \frac{\nabla \be}{\be} \cdot \nabla \varphi + \frac{(\nabla \be\cdot\nabla\varphi)^2}{\be^2}\rp \be d\la \\
	&\le \int_\Om \lp \lp(\Delta \varphi)^-\rp^2 + \lp(\Delta \varphi)^+\rp^2 + 2(\Delta \varphi)^- \frac{|\nabla \be|}{\be} |\nabla \varphi| +2(\Delta \varphi)^+ \frac{|\nabla \be|}{\be} |\nabla \varphi| + \frac{(\nabla \be\cdot\nabla\varphi)^2}{\be^2}\rp \be d\la \\
	&\le \int_{\{\rho_{\dOm}\le t_0\}} \biggl( \lp  \lp (d-1)\frac{h_2'}{h_2}(\rho_{\dOm}) \lp 1-\frac{\rho_{\dOm}}{t_0} \rp -\frac{1}{t_0}\rp^-\rp^2 
		+ \lp \lp (d-1)\frac{h_1'}{h_1}(\rho_{\dOm})   \lp 1-\frac{\rho_{\dOm}}{t_0} \rp -\frac{1}{t_0}\rp^+\rp^2 \\
		&+ 2\lp  (d-1)\frac{h_2'}{h_2}(\rho_{\dOm}) \lp 1-\frac{\rho_{\dOm}}{t_0} \rp -\frac{1}{t_0}\rp^- \frac{|\nabla \be|}{\be} \lp 1- \frac{\rho_{\dOm}}{t_0}\rp  \\
		&+2\lp (d-1)\frac{h_1'}{h_1}(\rho_{\dOm})   \lp 1-\frac{\rho_{\dOm}}{t_0} \rp -\frac{1}{t_0}\rp^+ \frac{|\nabla \be|}{\be} \lp 1- \frac{\rho_{\dOm}}{t_0}\rp 
		+ \frac{|\nabla \be|^2}{\be^2} \lp 1-\frac{\rho_{\dOm}}{t_0}\rp^2 \biggr) \be d\la \\
	&= \int_{\{\rho_{\dOm} \le t_0\}} \biggl[ \lp \lp (d-1)\frac{h_2'}{h_2}(\rho_{\dOm}) -\frac{1}{t_0-\rho_{\dOm}}\rp^-\rp^2 
		+ \lp \lp (d-1)\frac{h_1'}{h_1}(\rho_{\dOm})  -\frac{1}{t_0-\rho_{\dOm}}\rp^+\rp^2 \\
		&+ 2\lp  (d-1)\frac{h_2'}{h_2}(\rho_{\dOm}) -\frac{1}{t_0-\rho_{\dOm}}\rp^- \frac{|\nabla \be|}{\be} +2\lp (d-1)\frac{h_1'}{h_1}(\rho_{\dOm})  -\frac{1}{t_0-\rho_{\dOm}}\rp^+ \frac{|\nabla \be|}{\be} \\
		&+ \frac{|\nabla \be|^2}{\be^2} \biggr] \lp 1-\frac{\rho_{\dOm}}{t_0}\rp^2 \be d\la.
\end{align*}
\end{proof}

In the previous Lemma $t_0\in (0,h_2^{-1}(0))$ may be chosen to either optimise $\left|\Delta \varphi + \frac{\nabla \be}{\be}\cdot\nabla\varphi \right|_{2,\be\la}^2$ or $|\nabla \varphi|_{2,\be\la}^2$.

\begin{lemma}\label{lem:rho}
Let $k_2\in\mathbb{R}$ such that $\mathrm{sect} \le k_2$ and $\gamma_2\in\mathbb{R}$ such that $\mathrm{I\!I}\le \gamma_2 id$. Then $k_2>-\gamma_2^2$, and for all $\eps>0$ there exists a function $\rho\in C^1(\Om)$ such that $\frac{\partial \rho}{\partial N}\vert_{\dOm}=-1$ and $\nabla \rho$ is Lipschitz continuous on $\Om$ and
\begin{align*}
|\nabla \rho|_\infty &\le 1, \\
\left| \lp \Delta \rho + \frac{\nabla \be}{\be}\cdot \nabla \rho \rp^- \right|_\infty &\le \inf_{t_1\in(0,h_2^{-1}(0))} \sup_{\{\rho_{\dOm}\le t_1\}} \lp 1- \frac{\rho_{\dOm}}{t_1} \rp \lp (d-1)\frac{h_2'}{h_2}(\rho_{\dOm}) - \frac{1}{t_1-\rho_{\dOm}} - \frac{|\nabla \be|}{\be} \rp^- +\eps.
\end{align*}
\end{lemma}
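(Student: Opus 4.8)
First I would dispose of the curvature--convexity inequality $k_2>-\gamma_2^2$. If $k_2>0$ it is immediate, since $-\gamma_2^2\le 0<k_2$. Otherwise I use a touching-sphere comparison: fixing a suitable interior point $p$ and letting $q\in\dOm$ maximise $d(p,\cdot)$ over $\dOm$, the boundary lies locally inside the geodesic sphere $S(p,r)$, $r:=d(p,q)>0$, to which it is tangent with the same outward normal. As $\dOm$ lies inside this sphere, $II_{\dOm}(q)$ dominates the second fundamental form of $S(p,r)$, which by the Hessian comparison theorem under $sect\le k_2$ is bounded below by $\mathrm{ct}_{k_2}(r)\,g$, where $\mathrm{ct}_{k_2}(r)=\sqrt{-k_2}\coth(\sqrt{-k_2}\,r)$ for $k_2<0$ and $\mathrm{ct}_0(r)=1/r$. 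In both cases $\mathrm{ct}_{k_2}(r)>\sqrt{-k_2}\ge 0$, so that $\gamma_2\ge\mathrm{ct}_{k_2}(r)$ yields $\gamma_2^2>-k_2$; this is the comparison already underlying~\cite{brw}.

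For the construction I follow Lemma~\ref{lem:rhol2} and~\cite{brw} and set $\rho:=\psi_{t_1}\circ\rho_{\dOm}$ with the tent profile $\psi_{t_1}(u):=\int_0^u(1-s/t_1)^+\,ds$, for a parameter $t_1\in(0,h_2^{-1}(0))$ to be optimised at the end. Since $\psi_{t_1}'(0)=1$ and $\partial_N\rho_{\dOm}=-1$ we obtain $\partial_N\rho=-1$, while $0\le\psi_{t_1}'\le 1$ and $|\nabla\rho_{\dOm}|=1$ give $|\nabla\rho|_\infty\le 1$. On the open set $\{\rho_{\dOm}<t_1\}$, where $\rho_{\dOm}$ is smooth, the chain rule yields
\[
\Delta\rho+\frac{\nabla\be}{\be}\cdot\nabla\rho=\psi_{t_1}''(\rho_{\dOm})+\psi_{t_1}'(\rho_{\dOm})\left(\Delta\rho_{\dOm}+\frac{\nabla\be}{\be}\cdot\nabla\rho_{\dOm}\right),
\]
and inserting $\psi_{t_1}''=-1/t_1$, the lower Laplacian comparison~\eqref{eq:laplacecomp2}, the bound $\frac{\nabla\be}{\be}\cdot\nabla\rho_{\dOm}\ge-|\nabla\be|/\be$, and the identity $1/t_1=(1-\rho_{\dOm}/t_1)/(t_1-\rho_{\dOm})$ leads to
\[
\Delta\rho+\frac{\nabla\be}{\be}\cdot\nabla\rho\ge\left(1-\frac{\rho_{\dOm}}{t_1}\right)\left((d-1)\frac{h_2'}{h_2}(\rho_{\dOm})-\frac{1}{t_1-\rho_{\dOm}}-\frac{|\nabla\be|}{\be}\right),
\]
while the left-hand side vanishes on $\{\rho_{\dOm}\ge t_1\}$. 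Because the prefactor $1-\rho_{\dOm}/t_1$ is nonnegative, the negative part factors through it, so that $\left(\Delta\rho+\frac{\nabla\be}{\be}\cdot\nabla\rho\right)^-$ is pointwise bounded by the integrand of the claimed supremum; taking $\sup_{\{\rho_{\dOm}\le t_1\}}$ and then $\inf_{t_1}$ finishes this computation.

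The main obstacle, and the source of the $+\eps$, is regularity across the cut locus of $\dOm$. There $\rho_{\dOm}$ is only Lipschitz and semiconcave: $\nabla\rho_{\dOm}$ jumps, so $\rho=\psi_{t_1}\circ\rho_{\dOm}$ is a priori not $C^1$, and the distributional $\Delta\rho_{\dOm}$ carries a nonpositive singular part on the cut locus that would make $\left(\Delta\rho+\frac{\nabla\be}{\be}\cdot\nabla\rho\right)^-$ unbounded in $L^\infty$. The pointwise identity above is therefore valid only off this set; unlike in the $L^2$ estimate of Lemma~\ref{lem:rhol2}, where such a null set is harmless, here it must be handled. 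I would do so by regularising $\rho_{\dOm}$ in the interior (for instance by mollification or inf-convolution, leaving a collar of $\dOm$ untouched so that $\partial_N\rho=-1$ and $|\nabla\rho|_\infty\le 1$ are preserved), producing a genuine $\rho\in C^1(\Om)$ with Lipschitz gradient for which~\eqref{eq:laplacecomp2} and hence the displayed estimate hold up to an error below any prescribed $\eps>0$.
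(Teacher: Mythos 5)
Your construction is the same as the paper's: the paper also takes $\rho=\psi_{t_1}\circ\rho_{\dOm}$ with the tent profile $\psi_{t_1}(u)=\int_0^u(1-s/t_1)^+ds$, computes $\nabla\rho$ and $\Delta\rho$ on $\{\rho_{\dOm}\le t_1\}$, inserts \eqref{eq:laplacecomp2}, and factors out $\lp 1-\rho_{\dOm}/t_1\rp$; your touching-sphere argument for $k_2>-\gamma_2^2$ is a reasonable substitute for the paper's citation of \cite{brw}. Where you genuinely deviate is in the role of $\eps$: in the paper's proof all of $\eps$ is spent on replacing the bound at one fixed $t_1$ by the infimum over $t_1\in(0,h_2^{-1}(0))$ (the constructed $\rho$ depends on $t_1$, and the infimum need not be attained); the cut locus of $\dOm$ is never mentioned there. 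You are right to flag it: $h_2^{-1}(0)$ is a lower bound only for the \emph{focal} radius of $\dOm$, not for the cut radius, because cut points produced by two distinct minimizing geodesics to $\dOm$ are a global phenomenon that $sect\le k_2$, $II\le\gamma_2\, id$ do not control (think of a thin neck bounded by two nearly parallel, flat pieces of $\dOm$). Where the cut locus enters $\{\rho_{\dOm}<t_1\}$, the function $\psi_{t_1}\circ\rho_{\dOm}$ is not $C^1$ and the distributional Laplacian of $\rho_{\dOm}$ carries a nonpositive singular part, so \eqref{eq:laplacecomp2} cannot be invoked. This is a real gap in the paper's own proof, and you saw it.

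However, your repair step is the gap in \emph{your} proof: it would fail, and no regularisation can succeed. The singular part of $\Delta\rho$ is a negative surface-type measure on the cut locus (the normal component of $\nabla\rho$ jumps by $-2\psi_{t_1}'(\rho_{\dOm})$ across it); mollifying at scale $\delta$ turns this into values of order $-\delta^{-1}$, so $\lp\Delta\rho_\delta\rp^-$ blows up in $L^\infty$ instead of becoming $\le\eps$, and inf-convolution preserves concave kinks rather than removing them. In fact the statement itself fails in this generality, so the defect is not reparable by any smoothing: take a flat planar domain ($d=2$, $sect=0=k_2$) with a neck of width $\eps_0$ and length $\ell$ whose two straight sides flare apart concavely (there $II\le 0$) and are joined far away by arcs of curvature at most $\gamma_2$, with $\gamma_2$ small; the boundary is connected, $II\le\gamma_2\, id$, and $h_2^{-1}(0)=1/\gamma_2$. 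With constant $\be$ the bound claimed by the lemma equals $2\gamma_2+\eps$, yet for \emph{any} $\rho\in C^1(\Om)$ with Lipschitz gradient, $\frac{\partial\rho}{\partial N}\vert_{\dOm}=-1$ and $|\nabla\rho|_\infty\le1$, the divergence theorem over the neck rectangle $G$ (long sides in $\dOm$, short sides interior) gives
\begin{equation*}
\int_G \Delta\rho\, d\la \;=\; \int_{\partial G} \nabla\rho\cdot\nu \, d\mathcal{H}^{1} \;\le\; -2\ell+2\eps_0,
\qquad\text{hence}\qquad
\left|\lp\Delta\rho\rp^-\right|_\infty \;\ge\; \frac{2\ell-2\eps_0}{\ell\,\eps_0}\;\approx\;\frac{2}{\eps_0},
\end{equation*}
a contradiction once $\eps_0$ is small. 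So the lemma (and likewise Lemma \ref{lem:rhol2}) requires an additional hypothesis, e.g.\ that $t_1$ also stays below the cut distance (reach) of $\dOm$, i.e.\ $h_2^{-1}(0)$ should be replaced by its minimum with the cut distance; under that hypothesis your computation and the paper's coincide and are correct, with $\eps$ spent, as in the paper, on a near-optimal choice of $t_1$.
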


\begin{proof} 
Let  $h_2$ be the function defined in~\eqref{eq:hfctn} with $k=k_2$ and $\gamma=\gamma_2$. It holds $k_2>-\gamma_2^2$, see~\cite{brw}.
Let $t_1\in(0,h_2^{-1}(0))$ to be chosen later. 
By~\eqref{eq:laplacecomp2}
\begin{equation}\label{eq:laplacea}
\Delta \rho_{\dOm} \ge (d-1)\frac{h_2'}{h_2}(\rho_{\dOm}) 
 \text{ on } \{\rho_{\dOm} \le t_1\}.
\end{equation}
Now define
\begin{equation*}
\rho:= \int_0^{\rho_{\dOm}} \lp 1-\frac{s}{t_1}\rp^+ ds.
\end{equation*}
It holds 
\begin{equation*}
\nabla \rho (x)= \begin{cases} \nabla \rho_{\dOm}(x)\cdot \lp 1-\frac{\rho_{\dOm}(x)}{t_1} \rp  &\rho_{\dOm}(x) \le t_1, \\
	0  &\text{ else}, 
	\end{cases}
\end{equation*}
and thus $\frac{\partial \rho}{\partial N}\vert_{\dOm}=-1$, $|\nabla \rho|_\infty\le 1$ and $\nabla \rho$ is Lipschitz continuous. Furthermore,
\begin{equation*}
\Delta \rho (x)= \begin{cases}\Delta \rho_{\dOm}(x) \lp 1- \frac{\rho_{\dOm}(x)}{t_1}\rp -\frac{1}{t_1} & \rho_{\dOm}(x)\le t_1,\\
	0 & \text{ else},
	\end{cases}
\end{equation*}
and thus by inequality~\eqref{eq:laplacea} on $\{\rho_{\dOm}\le t_1\}$
\begin{align*}
\Delta \rho + \frac{\nabla \be}{\be}\cdot \nabla \rho &\ge (d-1)\frac{h_2'}{h_2}(\rho_{\dOm})\lp 1-\frac{\rho_{\dOm}}{t_1} \rp - \frac{1}{t_1} - \frac{|\nabla \be|}{\be} |\nabla \rho|\\	
	&\ge  (d-1)\frac{h_2'}{h_2}(\rho_{\dOm})\lp 1-\frac{\rho_{\dOm}}{t_1} \rp - \frac{1}{t_1} - \frac{|\nabla \be|}{\be} \lp1-\frac{\rho_{\dOm}}{t_1}\rp \\	
	&= \lp 1- \frac{\rho_{\dOm}}{t_1} \rp \lp (d-1)\frac{h_2'}{h_2}(\rho_{\dOm}) - \frac{1}{t_1-\rho_{\dOm}} - \frac{|\nabla \be|}{\be} \rp.
\end{align*}
This implies on $\{\rho_{\dOm}\le t_1\}$
\begin{equation*}
\lp \Delta \rho + \frac{\nabla \be}{\be}\cdot \nabla \rho \rp^- \le \lp 1- \frac{\rho_{\dOm}}{t_1} \rp \lp (d-1)\frac{h_2'}{h_2}(\rho_{\dOm}) - \frac{1}{t_1-\rho_{\dOm}} - \frac{|\nabla \be|}{\be} \rp^-.
\end{equation*}
We can still choose $t_1\in(0,h_2^{-1}(0))$ to obtain for arbitrary $\eps>0$ \begin{small}
\begin{equation*}
\left| \lp \Delta \rho + \frac{\nabla \be}{\be}\cdot \nabla \rho \rp^- \right|_\infty \le \inf_{t_1\in(0,h_2^{-1}(0))} \sup_{\{x|\rho_{\dOm}(x)\le t_1\}} \lp 1- \frac{\rho_{\dOm}(x)}{t_1} \rp \lp (d-1)\frac{h_2'}{h_2}(\rho_{\dOm}) - \frac{1}{t_1-\rho_{\dOm}} - \frac{|\nabla \be|}{\be} \rp^-(x) +\eps.
\end{equation*}\end{small}
\end{proof}

In the previous two Lemmata we bound $\lp \Delta \rho + \frac{\nabla \be}{\be}\cdot \nabla \rho \rp$ using the Laplacian comparison theorem. It would seem convenient to use a weighted Laplacian comparison theorem on manifolds with boundary instead of the non-weighted version used above. One such result giving an upper bound on the weighted Laplacian of the distance to the boundary function has been shown in~\cite{sakurei} under the assumption of a lower bound on the $n$-weighted Ricci curvature. As we are not aware of an analogous result for the other direction we refrain from using the result in~\cite{sakurei} so as to keep the setting coherent and not mix different types of curvature assumptions.\\

Inserting $\varphi$ and $\rho$ as defined in Lemma~\ref{lem:rhol2} and Lemma~\ref{lem:rho} in Proposition~\ref{prop:mixedterm} and Proposition~\ref{prop:varboundary} we now get explicit constants $K_1, K_2$ and $K_{\dOm,\Om}$ in terms of bounds on sectional and Ricci curvature and second fundamental form on the boundary. We state these in the following Theorem. Via Proposition~\ref{prop:varinterpol} we thus obtain an upper bound on $C_\mu$.

\begin{theorem}\label{thm:explconst}
Let $k_1,k_2\in\mathbb{R}$ such that $\mathrm{Ric} \ge (d-1)k_1,  \mathrm{sect} \le k_2$ and $\gamma_1,\gamma_2\in\mathbb{R}$ such that $\gamma_1 id \le \mathrm{I\!I} \le \gamma_2 id$. Then the assumptions in Proposition~\ref{prop:varinterpol} are fulfilled with \small
\begin{align*}
K_1 &= \frac{A}{B^2}\left|\frac{\be}{\al}\right|_\infty \inf_{t_0\in(0,h_2^{-1}(0))} \inf_{\eps\in(0,\infty)} \Biggl((1+\eps)  C_{\laa}\int_{\{\rho_{\dOm}\le t_0\}} \biggl[ \lp \lp (d-1)\frac{h_2'}{h_2}(\rho_{\dOm}) -\frac{1}{t_0-\rho_{\dOm}}\rp^-\rp^2 \\
		&+ \lp \lp (d-1)\frac{h_1'}{h_1}(\rho_{\dOm})  -\frac{1}{t_0-\rho_{\dOm}}\rp^+\rp^2 + 2\lp  (d-1)\frac{h_2'}{h_2}(\rho_{\dOm}) -\frac{1}{t_0-\rho_{\dOm}}\rp^- \frac{|\nabla \be|}{\be}\\
		& +2\lp (d-1)\frac{h_1'}{h_1}(\rho_{\dOm})  -\frac{1}{t_0-\rho_{\dOm}}\rp^+ \frac{|\nabla \be|}{\be} + \frac{|\nabla \be|^2}{\be^2} \biggr] \lp 1-\frac{\rho_{\dOm}}{t_0}\rp^2 \be d\la 
 + (1+\eps^{-1}) \int_{\{\rho_{\dOm}\le t_0\}} \lp 1-\frac{\rho_{\dOm}}{t_0}\rp^2 \be d\la \Biggr) ,\\
K_2 &= 0,\\
K_{\dOm,\Om} &=  \frac{A}{B}\left|\frac{\be}{\al}\right|_\infty \lp C_{\laa}\inf_{t_1\in(0,h_2^{-1}(0))} \sup_{\{\rho_{\dOm}\le t_1\}} \lp 1- \frac{\rho_{\dOm}}{t_1} \rp \lp (d-1)\frac{h_2'}{h_2}(\rho_{\dOm}) - \frac{1}{t_1-\rho_{\dOm}} - \frac{|\nabla \be|}{\be} \rp^- + 2C_{\laa}^{(1/2)}\rp.
\end{align*}
\end{theorem}

As explained in Remark~\ref{rem:steklov} bounding the optimal $K_{\dOm,\Om}$ fulfilling inequality~\eqref{eq:varboundary} from above corresponds to bounding the first non-trivial doubly weighted Steklov eigenvalue $\sigma_{\mu,1}$ from below, thus we may directly note the following corollary:

\begin{corollary}\label{cor:steklovbound}
Let $k_2\in\mathbb{R}$ such that $\mathrm{sect}\le k_2$ and $\gamma_2\in\mathbb{R}$ such that $\mathrm{I\!I} \le \gamma_2 id$. Then for the first non-trivial doubly weighted Steklov eigenvalue $\sigma_{\mu,1}$ of $\Om$ it holds that
\begin{equation*}
	\sigma_{\mu,1} \ge \lp \left|\frac{\be}{\al}\right|_\infty \lp C_{\laa}\inf_{t_1\in(0,h_2^{-1}(0))} \sup_{\{\rho_{\dOm}\le t_1\}} \lp 1- \frac{\rho_{\dOm}}{t_1} \rp \lp (d-1)\frac{h_2'}{h_2}(\rho_{\dOm}) - \frac{1}{t_1-\rho_{\dOm}} - \frac{|\nabla \be|}{\be} \rp^- + 2C_{\laa}^{(1/2)}\rp \rp^{-1}.
\end{equation*}
\end{corollary}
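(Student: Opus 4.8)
The plan is to read the statement off directly as a corollary of the explicit constant $K_{\dOm,\Om}$ produced in Proposition \ref{prop:explconst}, combined with the variational identification of the optimal such constant with the Steklov eigenvalue recorded in Remark \ref{rem:steklov}. No new analysis is needed; the work has all been done in Lemma \ref{lem:rho}, Proposition \ref{prop:varboundary} and Remark \ref{rem:steklov}, and the task is to assemble these correctly.

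First I would recall from \eqref{eq:steklov} that the \emph{optimal} constant in inequality \eqref{eq:varboundary}, call it $K_{\dOm,\Om}^{\mathrm{opt}}$, is characterised as a supremum and satisfies
\[
K_{\dOm,\Om}^{\mathrm{opt}} = \sup_{f\in C^1(\Om)} \frac{Var_{\sib}(f)}{\int_\Om |\nabla f|^2 d\laa} = \frac{A}{B}\sigma^{-1}.
\]
Next I would invoke Proposition \ref{prop:explconst}: inserting the function $\rho$ constructed in Lemma \ref{lem:rho} into Proposition \ref{prop:varboundary} yields an \emph{admissible} constant
\[
K_{\dOm,\Om} = \frac{A}{B}\left|\frac{\be}{\al}\right|_\infty \lp C_{\laa}\inf_{t_1\in(0,h_2^{-1}(0))} \sup_{\{\rho_{\dOm}\le t_1\}} \lp 1- \frac{\rho_{\dOm}}{t_1} \rp \lp (d-1)\frac{h_2'}{h_2}(\rho_{\dOm}) - \frac{1}{t_1-\rho_{\dOm}} - \frac{|\nabla \be|}{\be} \rp^- + 2C_{\laa}^{(1/2)}\rp
\]
for which \eqref{eq:varboundary} holds. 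Here I would stress that this particular constant is built only from the function $h_2$, hence depends solely on the bounds $sect\le k_2$ and $II\le \gamma_2 id$; this is exactly why the statement of the Corollary can omit the hypotheses on $k_1$ and $\gamma_1$ that appear in Proposition \ref{prop:explconst}.

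Finally I would close the argument by comparison. Since $K_{\dOm,\Om}^{\mathrm{opt}}$ is the supremum over all $f$, every admissible constant dominates it, so $\frac{A}{B}\sigma^{-1} = K_{\dOm,\Om}^{\mathrm{opt}} \le K_{\dOm,\Om}$. Cancelling the common positive factor $\frac{A}{B}$ and passing to reciprocals gives the asserted lower bound on $\sigma$. The only point requiring genuine care — and the closest thing to an obstacle — is tracking the direction of the inequalities: one must be sure that the explicitly constructed $K_{\dOm,\Om}$ is an \emph{upper} bound for the optimal constant (as it satisfies \eqref{eq:varboundary} and the optimal one is the least such, i.e.\ the supremum of the Rayleigh quotient), so that after inversion the Steklov bound is a \emph{lower} bound; beyond this bookkeeping the Corollary is immediate.
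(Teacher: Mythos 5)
Your proposal is correct and is exactly the paper's argument: the paper derives this corollary by combining the explicit admissible constant $K_{\dOm,\Om}$ from Lemma \ref{lem:rho} and Proposition \ref{prop:varboundary} (recorded in Proposition \ref{prop:explconst}) with the identification $K_{\dOm,\Om}^{\mathrm{opt}}=\frac{A}{B}\sigma^{-1}$ from Remark \ref{rem:steklov}, then cancelling $\frac{A}{B}$ and inverting. Your observation that only the $h_2$-dependent hypotheses ($sect\le k_2$, $II\le\gamma_2 id$) are needed is also the correct reason the corollary drops the assumptions on $k_1,\gamma_1$.
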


\smallskip

\subsection{Sticky reflection without boundary diffusion}\label{ssec:pcwobd}

In this subsection we consider the case $\delta=0$ without weighted boundary diffusion. Note that the spectrum (but not spectral gap) of $\hat{L}$ has previously been studied in the case of constant weights in~\cite{belowgilles}.\\
Here we will show that a Poincar\'{e} inequality holds for Brownian motion with sticky reflection doubly weighted by $\mu$ on $\Om$ and give an upper bound on the Poincar\'{e} constant $\hat{C}_\mu$. The bound will depend on the Poincar\'{e} constant $C_{\laa}$.
The main result providing an upper bound on $C_\mu$ in this setting will be via the combination of Proposition~\ref{prop:pcsrbm} and Theorem~\ref{thm:explconst}.

\begin{proposition}\label{prop:expcwobd}
There is a constant $\hat{C}_\mu\in\mathbb{R}$ such that for all $f\in C^1(\Om)$
\begin{equation*}
\mathrm{Var}_{\mu}(f) \le \hat{C}_\mu \hat{\eE}_\mu(f).
\end{equation*}
\end{proposition}
\begin{proof}
Assume the claim is false, i.e.\ there is a sequence $(f_k)_{k\in\mathbb{N}}\subset C^1(\Om)$ such that 
\begin{equation}\label{eq:PIexistencewsr}
\mathrm{Var}_{\mu}(f_k)>k \hat{\eE}_\mu(f_k).
\end{equation} Assume without loss of generality that $\int_\Om f_k d\mu =0$ and $\mathrm{Var}_{\mu}(f_k)=1$ for all $k\in\mathbb{N}$. Then inequality~\eqref{eq:PIexistencewsr} implies that
\begin{equation*}
\hat{\eE}_\mu(f_k)<\frac{1}{k}\ \Leftrightarrow\ \int_\Om |\nabla f_k|^2 \al d\la <\frac{1}{k}.
\end{equation*}
Furthermore, $\mathrm{Var}_{\mu}(f_k)=1$ together with $\int_\Om f_k d\mu =0$ implies
\begin{equation*}
|f_k|^2_{2,\laa} \le \frac{1}{A} + \frac{B}{A}.
\end{equation*}
Thus, $(f_k)_k$ is a bounded sequence in $W^{1,2}(\Om,\al d\la)$. The following is up to taking subsequences: As $W^{1,2}(\Om,\al d\la)$ is a Hilbert space $(f_k)_k$ converges weakly to an element $f$ in $W^{1,2}(\Om,\al d\la)$. As $W^{1,2}(\Om,\al d\la)$ is compactly embedded in $L^2(\Om,\al d\la)$ and $L^{2}(\dOm,\be d\si)$, $(f_k)_k$ converges to $f$ in $L^2(\Om,\al d\la)$ and (by restriction to the boundary) in $L^2(\dOm,\be d\si)$. Thus, 
\begin{equation*}
1= \lim_{k\to\infty} \mathrm{Var}_{\mu}(f_k) = \lim_{k\to\infty} |f_k|^2_{2,\mu} = |f|^2_{2,\mu}.
\end{equation*} 
Furthermore, $\int_\Om f d\mu=0$. Since $f\in W^{1,2}(\Om,\al d\la)$ with $\nabla f =0$, $f$ has to be constant $\la$ almost surely. If $f\equiv c$ for $c\neq0$, then also $f\vert_{\dOm}\equiv c$ and $\int_\Om f d\mu=c\neq 0$. Thus, $f\equiv 0$ which is a contradiction to $|f|_{2,\mu}=1$.
\end{proof}

\begin{proposition}\label{prop:pcsrbm}
Assume there are constants $K_1,K_{\dOm,\Om}$ such that for all $f\in C^1(\Om)$
\begin{align*}
\var_{\sib}(f) &\le K_{\dOm,\Om} \int_\Om |\nabla f|^2 d\laa, \\
\lp \int_\Om f d\laa - \int_{\dOm} f d\sib \rp^2 &\le K_1 \int_\Om |\nabla f|^2d\laa, 
\end{align*}
then
\begin{equation*}
\hat{C}_\mu \le \lp C_{\laa} + \frac{BK_{\dOm,\Om}}{A} + BK_1 \rp.
\end{equation*}
\end{proposition}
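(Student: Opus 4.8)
The plan is to reuse the purely algebraic variance decomposition established in the proof of Proposition \ref{prop:varinterpol}, which used only the normalisation $A+B=1$ and therefore remains valid verbatim in the present setting. Namely, for any $f\in C^1(\Om)$,
\begin{equation*}
	Var_\mu(f) = A\, Var_{\laa}(f) + B\, Var_{\sib}(f) + AB\lp \int_\Om f d\laa - \int_{\dOm} f d\sib \rp^2.
\end{equation*}
The essential difference from Proposition \ref{prop:varinterpol} is that the energy form $\hat{\eE}$ no longer contains a boundary gradient contribution, so none of the three summands may be estimated through $\int_{\dOm} |\nabla^\tau f|^2 \be d\si$; instead each must be controlled purely by the interior Dirichlet energy $\int_\Om |\nabla f|^2 d\laa$.

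Concretely, I would bound the three terms separately. The first is handled by the Poincar\'{e} inequality for the $\laa$-weighted Neumann Laplacian, giving $Var_{\laa}(f) \le C_{\laa}\int_\Om |\nabla f|^2 d\laa$. The second is controlled by the first hypothesis, $Var_{\sib}(f) \le K_{\dOm,\Om}\int_\Om |\nabla f|^2 d\laa$, and the mixed third term by the second hypothesis, $\lp \int_\Om f d\laa - \int_{\dOm} f d\sib \rp^2 \le K_1 \int_\Om |\nabla f|^2 d\laa$. Summing these three estimates yields
\begin{equation*}
	Var_\mu(f) \le \lp A C_{\laa} + B K_{\dOm,\Om} + AB K_1\rp \int_\Om |\nabla f|^2 d\laa.
\end{equation*}

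Finally I would convert the right-hand side into $\hat{\eE}(f)$ using the relation $d\laa = \frac{\al}{A}d\la$, which gives $\int_\Om |\nabla f|^2 d\laa = \frac{1}{A}\int_\Om |\nabla f|^2 \al d\la = \frac{1}{A}\hat{\eE}(f)$. Substituting and simplifying produces exactly the asserted constant $C_{\laa} + \frac{B K_{\dOm,\Om}}{A} + B K_1$. There is no genuine obstacle in this argument: in contrast to Proposition \ref{prop:varinterpol}, no interpolation parameter $t$ appears, precisely because the absence of a boundary diffusion term removes any freedom to trade energy between the boundary and the interior, and hence the maximum over the two regimes collapses into a single additive bound. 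The only point requiring a little care is the bookkeeping of the normalising factor $A$ arising from the passage between $\int_\Om |\nabla f|^2 d\laa$ and $\hat{\eE}(f)$.
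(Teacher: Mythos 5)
Your proof is correct and follows essentially the same route as the paper: the paper's proof of Proposition \ref{prop:pcsrbm} likewise reuses the variance decomposition $Var_\mu(f) = A\,Var_{\laa}(f) + B\,Var_{\sib}(f) + AB\lp \int_\Om f d\laa - \int_{\dOm} f d\sib \rp^2$ from Proposition \ref{prop:varinterpol} and bounds each term by the interior Dirichlet energy, absorbing the normalising factor $A$ to arrive at the stated constant. Your added remarks on why no interpolation parameter appears are accurate but not needed.
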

\begin{proof}
We decompose the variance with respect to $\mu$ as in the proof of Proposition~\ref{prop:varinterpol}
\begin{align*}
\var_\mu(f) &=  A\var_{\laa}(f) + B\var_{\sib}(f) + AB\lp \int_\Om f d\laa - \int_{\dOm} f d\sib \rp^2 \\
	&\le \lp C_{\laa} + \frac{BK_{\dOm,\Om}}{A} + BK_1\rp \int_\Om |\nabla f|^2 \al d\la.
\end{align*}
Thus the statement of the Proposition follows.
\end{proof}

We can use $K_1$ and $K_{\dOm,\Om}$ from Theorem~\ref{thm:explconst} above because we managed to prove the statements there with $K_2=0$ and thus also obtain an upper bound on $\hat{C}_\mu$.

\begin{remark}\label{rem:comp}
We compare the results obtained above with the results in~\cite{mvr} and~\cite{brw}: In~\cite{mvr,brw} a special case of the present setting is considered. It corresponds to 
\begin{equation*}
	\al= \frac{\overline{\al}}{|\Om|},\ \be=\frac{1-\overline{\al}}{|\dOm|},
\end{equation*}
where $\overline{\al}\in(0,1)$ is constant. In this special case inequalities~\eqref{eq:varboundary} and~\eqref{eq:mixedterm} simplify to the conditions used in~\cite{mvr,brw} and the explicit values of $K_1,K_2,K_{\dOm,\Om}$ computed in Propositions~\ref{prop:mixedterm} and \ref{prop:varboundary} and finally Theorem~\ref{thm:explconst} equal the corresponding values in~\cite{brw}. Thus the overall upper bounds on $C_\mu$ and $\hat{C}_\mu$ obtained here equal the corresponding upper bounds obtained in~\cite{brw}.
\end{remark}

\section{Logarithmic Sobolev Inequality}\label{sec:LSI}
\subsection{Sticky-reflecting boundary diffusion}\label{ssec:lsiwbd}
Again we first consider the case $\delta=1$ including weighted boundary diffusion and discuss the case $\delta=0$ without boundary diffusion in subsection~\ref{ssec:lsiwobd}. 
We will give upper bounds on the logarithmic Sobolev constant $L_\mu$ for Brownian motion with sticky-reflecting boundary diffusion doubly weighted by $\mu$ on $\Om$. The bound will depend on the Poincar\'{e} and logarithmic Sobolev constants $C_{\laa}, L_{\laa}, C_{\sib},L_{\sib}$.
The main result providing an upper bound on $L_\mu$ in this setting will be obtained via combination of Proposition~\ref{prop:logsobinterpol}, Theorem~\ref{thm:explconst} and Theorem~\ref{thm:wbdrlogsob}.\\
We first prove an analogue of Proposition~\ref{prop:varinterpol} which is also an adaptation of a result from~\cite{brw} to the weighted setting. Note that assumptions \eqref{eq:cond1} and \eqref{eq:cond3} in the next Proposition are exactly the same as in Proposition~\ref{prop:varinterpol} for the Poincar\'{e} inequality.

\begin{proposition}\label{prop:logsobinterpol}
Assume there are constants $K_{\dOm,\Om}, L_{\dOm,\Om},K_1, K_2$ such that for all $f\in C^1(\Om)$
\begin{align}
\var_{\sib}(f) &\le K_{\dOm,\Om} \int_\Om |\nabla f|^2 d\laa, \label{eq:cond1}\\
\ent_{\sib}(f^2) &\le L_{\dOm,\Om} \int_\Om |\nabla f|^2 d\laa, \label{eq:ent}\\
\lp \int_\Om f d\laa - \int_{\dOm} f d\sib \rp^2 &\le K_1 \int_\Om |\nabla f|^2 d\laa + K_2 \int_{\dOm} |\nabla^\tau f|^2 d\sib,\label{eq:cond3}
\end{align} 
then 
\begin{align}\label{eq:logsobupperbound}
L_\mu \le \inf_{s,t\in[0,1]} \max\Bigl(&L_{\laa} + s L_{\dOm,\Om} \frac{B}{A} + B\frac{\ln(B)-\ln(A)}{B-A} \lp C_{\laa} + t K_{\dOm,\Om} + K_1\rp, \notag \\
		&(1-s)L_{\sib} + A\frac{\ln(B)-\ln(A)}{B-A} \lp (1-t)C_{\sib} + K_2\rp\Bigr).
\end{align}
\end{proposition}

\begin{proof}
\allowdisplaybreaks
We may write $\mu=\al\la+\be\si = A\laa+B\sib$ as a mixture or more specifically convex combination of the two probability measures $\laa$ and $\sib$. Thus the decomposition of the entropy with respect to the mixture of two measures as well as an optimal logarithmic Sobolev inequality for Bernoulli measures as described in~\cite[section 4]{chafai} may be applied to $\mu$, which is the first step of the following computation
\begin{align*}
\ent_{\mu}(f^2) &\le A \ent_{\laa}(f^2) + B \ent_{\sib}(f^2)  \\
	&+AB\frac{\ln(B)-\ln(A)}{B-A}\lp \var_{\laa}(f)+\var_{\sib}(f) + \lp \int_\Om f d\laa - \int_{\dOm} f d\sib \rp^2 \rp \\
	&\le  \lp L_{\laa} + s L_{\dOm,\Om} \frac{B}{A} + AB\frac{\ln(B)-\ln(A)}{B-A} \lp \frac{C_{\laa}}{A} + t \frac{K_{\dOm,\Om}}{A} + \frac{K_1}{A}\rp \rp \int_\Om |\nabla f|\al 
		d\la\\	
	&+ \lp (1-s)L_{\sib} + AB\frac{\ln(B)-\ln(A)}{B-A} \lp (1-t)\frac{C_{\sib}}{B} + \frac{K_2}{B}\rp \rp \int_{\dOm} |\nabla^\tau f|^2 \be d\si\\
	&=  \lp L_{\laa} + s L_{\dOm,\Om} \frac{B}{A} + B\frac{\ln(B)-\ln(A)}{B-A} \lp C_{\laa} + t K_{\dOm,\Om} + K_1\rp \rp \int_\Om |\nabla f|\al 
		d\la\\	
	&+ \lp (1-s)L_{\sib} + A\frac{\ln(B)-\ln(A)}{B-A} \lp (1-t)C_{\sib} + K_2\rp \rp \int_{\dOm} |\nabla^\tau f|^2 \be d\si.
\end{align*}
Thus
\begin{align*}
L_\mu \le \inf_{s,t\in[0,1]} \max\Bigl(&L_{\laa} + s L_{\dOm,\Om} \frac{B}{A} + B\frac{\ln(B)-\ln(A)}{B-A} \lp C_{\laa} + t K_{\dOm,\Om} + K_1\rp, \\
		&(1-s)L_{\sib} + A\frac{\ln(B)-\ln(A)}{B-A} \lp (1-t)C_{\sib} + K_2\rp\Bigr).
\end{align*}
\end{proof}
Note that in analogy with \cite[Remark 4.1]{brw} the infimum in inequality~\eqref{eq:logsobupperbound} can be determined explicitly. 

\subsection{Sticky reflection without boundary diffusion}\label{ssec:lsiwobd}
In this subsection we consider the case $\delta=0$ without weighted boundary diffusion. We will give upper bounds on the logarithmic Sobolev constant $\hat{L}_\mu$ for Brownian motion with sticky-reflection doubly weighted by $\mu$ on $\Om$. The bound will depend on the Poincar\'{e} and logarithmic Sobolev constants $C_{\laa}$ and $L_{\laa}$. The main result providing an upper bound on $\hat{L}_\mu$ in this setting will be obtained via the combination of Proposition~\ref{prop:logsobinterpolwobd}, Theorem~\ref{thm:explconst} and Theorem~\ref{thm:wbdrlogsob}.

\begin{proposition}\label{prop:logsobinterpolwobd}
Assume there are constants $K_{\dOm,\Om}, L_{\dOm,\Om},K_1$ such that for all $f\in C^1(\Om)$
\begin{align*}
\var_{\sib}(f) &\le K_{\dOm,\Om} \int_\Om |\nabla f|^2 d\laa,\\
\ent_{\sib}(f^2) &\le L_{\dOm,\Om} \int_\Om |\nabla f|^2 d\laa,\\
\lp \int_\Om f d\laa - \int_{\dOm} f d\sib \rp^2 &\le K_1 \int_\Om |\nabla f|^2 d\laa,
\end{align*} 
then 
\begin{equation*}
\hat{L}_\mu \le  \lp L_{\laa} + \frac{B}{A}L_{\dOm,\Om} + B\frac{\ln(B)-\ln(A)}{B-A} \lp C_{\laa} + K_{\dOm,\Om} + K_1\rp \rp.
\end{equation*}
\end{proposition}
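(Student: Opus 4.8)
The plan is to follow the template of the proof of Proposition \ref{prop:logsobinterpol}, the key difference being that in the absence of boundary diffusion the energy functional $\hat{\eE}(f)=\int_\Om |\nabla f|^2 \al d\la$ contains no tangential gradient term. Consequently all the hypotheses and all the known constants are expressed purely through the single Dirichlet integral $\int_\Om |\nabla f|^2 d\laa$, and the two–parameter optimisation over $s,t$ that appeared in Proposition \ref{prop:logsobinterpol} will collapse.

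First I would write $\mu = A\laa + B\sib$ as a convex combination of the two probability measures $\laa$ and $\sib$ and apply the entropy decomposition for mixtures together with the sharp logarithmic Sobolev inequality for Bernoulli measures from \cite{chafai}[section 4], exactly as in the proof of Proposition \ref{prop:logsobinterpol}. This yields
\begin{align*}
Ent_\mu(f^2) \le\; & A\, Ent_{\laa}(f^2) + B\, Ent_{\sib}(f^2) \\
& + AB\frac{log(B)-log(A)}{B-A}\lp Var_{\laa}(f) + Var_{\sib}(f) + \lp \int_\Om f d\laa - \int_{\dOm} f d\sib \rp^2 \rp.
\end{align*}

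Next I would bound each of the five terms on the right by a multiple of $\int_\Om |\nabla f|^2 d\laa$: the term $Ent_{\laa}(f^2)$ via the logarithmic Sobolev constant $L_{\laa}$, the term $Ent_{\sib}(f^2)$ via the hypothesis with constant $L_{\dOm,\Om}$, the term $Var_{\laa}(f)$ via the Poincar\'e constant $C_{\laa}$, the term $Var_{\sib}(f)$ via the hypothesis with constant $K_{\dOm,\Om}$, and the mixed term via the hypothesis with constant $K_1$. Since no tangential gradient term appears anywhere, all contributions combine into one common factor multiplying $\int_\Om |\nabla f|^2 d\laa$; this is precisely why no interpolation parameters are needed here, in contrast to Proposition \ref{prop:logsobinterpol}.

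Finally I would rewrite the estimate in terms of $\hat{\eE}$. Since $d\laa = \frac{\al}{A}d\la$ we have $\int_\Om |\nabla f|^2 d\laa = \frac{1}{A}\hat{\eE}(f)$, and substituting this identity turns the common prefactor into $L_{\laa} + \frac{B}{A}L_{\dOm,\Om} + B\frac{log(B)-log(A)}{B-A}\lp C_{\laa}+K_{\dOm,\Om}+K_1\rp$, which is exactly the claimed bound on $\hat{L}_\mu$. The only genuine input is the mixture entropy decomposition imported from \cite{chafai}; everything else is direct substitution, so I do not anticipate a real obstacle beyond verifying that the Bernoulli logarithmic Sobolev inequality is applied with the correct convex weights $A$ and $B=1-A$.
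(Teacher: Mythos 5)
Your proposal is correct and follows essentially the same route as the paper's own proof: decompose $Ent_\mu(f^2)$ via the mixture $\mu = A\laa + B\sib$ using the Bernoulli logarithmic Sobolev inequality from \cite{chafai}, bound each resulting term by the hypothesized constants times $\int_\Om |\nabla f|^2 d\laa = \frac{1}{A}\hat{\eE}(f)$, and collect the prefactors. Your added observation that the absence of a tangential-gradient term makes the $s,t$-interpolation of Proposition \ref{prop:logsobinterpol} unnecessary is exactly the reason the paper's proof also needs no optimisation parameters.
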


\begin{proof}
As in the previous proof we write $\mu=\al\la+\be\si = A\laa+B\sib$ as a convex combination of the two probability measures $\laa$ and $\sib$, decompose the entropy and use an optimal logarithmic Sobolev inequality for Bernoulli measures as described in~\cite[section 4]{chafai}:
\begin{align*}
\ent_\mu(f^2) &\le A \ent_{\laa}(f^2) + B \ent_{\sib}(f^2) \\
	&+ AB\frac{\ln(B)-\ln(A)}{B-A}\lp \var_{\laa}(f)+\var_{\sib}(f) + \lp \int_\Om f d\laa - \int_{\dOm} f d\sib\rp^2 \rp \\
	&\le \lp L_{\laa} + \frac{B}{A}L_{\dOm,\Om} + AB\frac{\ln(B)-\ln(A)}{B-A} \lp \frac{C_{\laa}}{A} + \frac{K_{\dOm,\Om}}{A} + \frac{K_1}{A}\rp \rp \int_\Om |\nabla f|^2 \al d\la \\
	&= \lp L_{\laa} + \frac{B}{A}L_{\dOm,\Om} + B\frac{\ln(B)-\ln(A)}{B-A} \lp C_{\laa} + K_{\dOm,\Om} + K_1\rp \rp \int_\Om |\nabla f|^2 \al d\la.
\end{align*}
\end{proof}

Analogously to Remark~\ref{rem:comp} in the special case of constant weights Propositions~\ref{prop:logsobinterpol} and \ref{prop:logsobinterpolwobd} coincide with the known results from~\cite{brw}.\\
The main focus of the next section will be to find a constant $L_{\dOm,\Om}$ fulfilling condition~\eqref{eq:ent}. Note that it is not trivial that condition~\eqref{eq:ent} holds at all. Via Propositions~\ref{prop:logsobinterpol} and~\ref{prop:logsobinterpolwobd} as well as Theorem~\ref{thm:explconst} we will then obtain upper bounds on $L_\mu$ and $\hat{L}_\mu$.

\section{Weighted Boundary-Interior Inequalities}\label{sec:bii}

In subsection~\ref{ssec:sobtraceineq} we collect some further boundary-interior inequalities that are of independent interest. These will eventually be used in subsection~\ref{ssec:boundarytracelogsob} in order to bound the logarithmic Sobolev constant. 

\subsection{Weighted Sobolev-Poincar\'{e} trace inequalities} \label{ssec:sobtraceineq}

We first recall the following statement that was already obtained in the proof of Proposition~\ref{prop:varboundary} (see inequalities~\ref{eq:someeq1} and~\ref{eq:someeq2}).

\begin{proposition}\label{anotherprop}
Let $\rho\in C^1(\Om)$ such that $\frac{\partial \rho}{\partial N}\vert_{\dOm}=- 1$ and $\nabla \rho$ is Lipschitz continuous on $\Om$. Then it holds \small
\begin{align*}
	\int_{\dOm} f^2 d \sib &\le \frac{A}{B}\left|\lp \Delta \rho +\frac{\nabla \be}{\be} \cdot \nabla \rho \rp^-\right|_\infty \left|\frac{\be}{\al}\right|_\infty \int_\Om f^2 d \laa + \frac{2A}{B}|\nabla \rho|_\infty \left|\frac{\be}{\al}\right|_\infty \lp \int_\Om f^2 d\laa \int_\Om |\nabla f|^2 d\laa \rp^{1/2}\\
	&\le \frac{A}{B}\left|\frac{\be}{\al}\right|_\infty\lp C_{\laa}\left|\lp \Delta \rho +\frac{\nabla \be}{\be} \cdot \nabla \rho \rp^-\right|_\infty   + 2C_{\laa}^{(1/2)}|\nabla \rho|_\infty \rp \int_\Om |\nabla f|^2 d\laa.
\end{align*}
for all $f\in C^1(\Om)$ with $\int_\Om f d\laa =0$.
\end{proposition}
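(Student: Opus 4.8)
The plan is to observe that this inequality is precisely the chain of estimates already carried out in the proof of Proposition \ref{prop:varboundary}: there one first bounds $Var_{\sib}(f) \le \int_{\dOm} f^2 \, d\sib$ by discarding the nonnegative term $\lp\int_{\dOm} f \, d\sib\rp^2$, and every subsequent step is in fact an estimate for $\int_{\dOm} f^2 \, d\sib$ itself. So the task reduces to retracing that computation starting directly from the boundary integral, and to keeping the intermediate Cauchy--Schwarz line explicit as the first displayed bound.

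First I would use $\frac{\partial \rho}{\partial N}\vert_{\dOm} = -1$ to write $\int_{\dOm} f^2 \, d\sib = \frac{1}{B}\int_{\dOm} f^2 \be \, d\si = -\frac{1}{B}\int_{\dOm} f^2 \frac{\partial \rho}{\partial N}\be \, d\si$, and then apply the divergence theorem (with $N$ the outward normal) to the vector field $f^2 \be \nabla \rho$, rewriting the boundary integral as an interior integral of $f^2 \be \Delta \rho + (2 f \be \nabla f + f^2 \nabla \be)\cdot \nabla \rho$. Grouping the zeroth-order terms produces the factor $\Delta \rho + \frac{\nabla \be}{\be}\cdot \nabla \rho$ multiplied by $f^2 \be$, and leaves the cross term $-\frac{2}{B}\int_\Om f \nabla f \cdot \nabla \rho\, \be\, d\la$. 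The Lipschitz hypothesis on $\nabla \rho$ guarantees that $\Delta \rho$ exists as an $L^\infty$ function, so all integrals are well defined.

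Next I would estimate the two pieces. For the zeroth-order term I would use the pointwise bound $-\lp\Delta \rho + \frac{\nabla\be}{\be}\cdot\nabla\rho\rp \le \lp\Delta \rho + \frac{\nabla\be}{\be}\cdot\nabla\rho\rp^-$ (legitimate since $f^2 \be \ge 0$) and pull out the corresponding $L^\infty$-norm; for the cross term I would bound by $\frac{2}{B}\int_\Om |f||\nabla f||\nabla\rho|\be\, d\la$ and pull out $|\nabla\rho|_\infty$. In both integrals I replace $\be\, d\la$ by $\al\, d\la$ at the cost of the factor $\left|\frac{\be}{\al}\right|_\infty$, and then pass to the normalised measure via $\al\, d\la = A\, d\laa$. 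This yields exactly the first displayed inequality once Cauchy--Schwarz is applied to the cross integral to produce $\lp\int_\Om f^2 d\laa \int_\Om |\nabla f|^2 d\laa\rp^{1/2}$.

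Finally, the second displayed inequality follows by invoking the Poincar\'e inequality for $\laa$. Since we assume $\int_\Om f\, d\laa = 0$, we have $\int_\Om f^2\, d\laa = Var_{\laa}(f) \le C_{\laa}\int_\Om |\nabla f|^2\, d\laa$; substituting this bound into the first term and into the factor $\int_\Om f^2 d\laa$ inside the square root collapses everything to a single multiple of $\int_\Om |\nabla f|^2\, d\laa$, with constant $C_{\laa}\left|\lp\Delta\rho+\frac{\nabla\be}{\be}\cdot\nabla\rho\rp^-\right|_\infty + 2 C_{\laa}^{1/2}|\nabla\rho|_\infty$. There is no genuine obstacle here, since the statement is deliberately extracted from a computation already performed; the only points requiring care are the sign convention in the divergence theorem and the pointwise negative-part estimate, together with correct bookkeeping of the normalisation constants $A$ and $B$.
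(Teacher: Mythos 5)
Your proposal is correct and follows exactly the paper's own route: the paper states this proposition as a restatement of the computation inside the proof of Proposition \ref{prop:varboundary}, and your argument (divergence theorem applied to $f^2\be\nabla\rho$, negative-part and $L^\infty$ estimates, the change of measure via $\left|\frac{\be}{\al}\right|_\infty$, Cauchy--Schwarz for the cross term, and the Poincar\'e inequality for $\laa$ used only in the final step thanks to $\int_\Om f\, d\laa=0$) reproduces that computation step for step, with correct bookkeeping of $A$ and $B$.
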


Note that the assumption $\int_\Om f d\laa =0$ was only used for the second inequality.\\
From this the next corollary follows immediately and gives an alternative upper bound for $K_1$ other than the one already obtained in Proposition~\ref{prop:mixedterm}.

\begin{corollary}
For $\rho\in C^1(\Om)$ such that $\frac{\partial \rho}{\partial N}\vert_{\dOm}=- 1$ and $\nabla \rho$ is Lipschitz continuous on $\Om$ inequality \eqref{eq:mixedterm} holds with $K_2=0$ and
\begin{equation*}
	K_1 = \frac{A}{B}\left|\frac{\be}{\al}\right|_\infty\lp C_{\laa}\left|\lp \Delta \rho +\frac{\nabla \be}{\be} \cdot \nabla \rho \rp^-\right|_\infty   + 2C_{\laa}^{(1/2)}|\nabla \rho|_\infty \rp.
\end{equation*}
\end{corollary}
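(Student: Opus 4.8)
The plan is to derive the Corollary directly from Proposition~\ref{anotherprop} by combining the bound on the boundary $L^2$-norm with an elementary manipulation of the mixed term appearing in~\eqref{eq:mixedterm}. The key observation, already flagged in the remark following Proposition~\ref{anotherprop}, is that the \emph{first} inequality in that Proposition does not require the normalisation $\int_\Om f\,d\laa=0$; only the second inequality (where the Poincar\'{e} inequality for $\laa$ is invoked) uses it. This is exactly what makes the reduction work, because the left-hand side of~\eqref{eq:mixedterm} is invariant under adding constants to $f$.

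First I would reduce to the case $\int_\Om f\,d\laa=0$. Given arbitrary $f\in C^1(\Om)$, set $\tilde f:=f-\int_\Om f\,d\laa$. Then $\nabla \tilde f=\nabla f$, and the left-hand side of~\eqref{eq:mixedterm} transforms as
\begin{equation*}
	\lp \int_\Om f\,d\laa - \int_{\dOm} f\,d\sib\rp^2 = \lp \int_\Om \tilde f\,d\laa - \int_{\dOm} \tilde f\,d\sib\rp^2 = \lp \int_{\dOm} \tilde f\,d\sib\rp^2,
\end{equation*}
since $\int_\Om \tilde f\,d\laa=0$ by construction. Next I would bound this by the full boundary second moment via Jensen's inequality, $\lp \int_{\dOm}\tilde f\,d\sib\rp^2 \le \int_{\dOm}\tilde f^2\,d\sib$, which holds because $\sib$ is a probability measure. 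Now $\tilde f$ satisfies $\int_\Om \tilde f\,d\laa=0$, so Proposition~\ref{anotherprop} applies to it directly, and its second (fully reduced) inequality yields
\begin{equation*}
	\int_{\dOm}\tilde f^2\,d\sib \le \frac{A}{B}\left|\frac{\be}{\al}\right|_\infty\lp C_{\laa}\left|\lp \Delta \rho +\frac{\nabla \be}{\be} \cdot \nabla \rho \rp^-\right|_\infty   + 2C_{\laa}^{(1/2)}|\nabla \rho|_\infty \rp \int_\Om |\nabla \tilde f|^2\,d\laa.
\end{equation*}
Since $\nabla \tilde f=\nabla f$, the right-hand side equals $K_1\int_\Om|\nabla f|^2\,d\laa$ with the stated $K_1$, and taking $K_2=0$ establishes~\eqref{eq:mixedterm} for all $f\in C^1(\Om)$.

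There is essentially no hard part here: the content is entirely in recognising that Proposition~\ref{anotherprop} was proved in a form that survives the reduction to zero $\laa$-mean, together with the trivial Jensen step that passes from the squared boundary average to the boundary second moment. The only point requiring a moment of care is verifying that the reduction $f\mapsto \tilde f$ genuinely leaves~\eqref{eq:mixedterm} invariant on \emph{both} sides simultaneously (the left side because it is a difference of two averages of $f$ against probability measures, the right side because gradients annihilate the additive constant). Once that bookkeeping is in place, the displayed chain of inequalities is immediate and the Corollary follows.
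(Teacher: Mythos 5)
Your proof is correct and is exactly the argument the paper has in mind: the paper derives this Corollary ``immediately'' from Proposition~\ref{anotherprop}, via the same reduction to zero $\laa$-mean (using that both sides of \eqref{eq:mixedterm} are invariant under adding constants), the Jensen step $\lp\int_{\dOm}\tilde f\,d\sib\rp^2\le\int_{\dOm}\tilde f^2\,d\sib$, and the second inequality of that Proposition. There are no gaps.
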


Additionally the following remark deduces a bound on the doubly weighted Sobolev trace operator from Proposition~\ref{anotherprop}:

\begin{remark}\label{rem:Sobtracebound}
For any $f\in C^1(\Om)$\begin{small}
\begin{align*}
	\int_{\dOm} f^2 \be d\si &\le \left|\lp \Delta \rho +\frac{\nabla \be}{\be} \cdot \nabla \rho \rp^-\right|_\infty \left|\frac{\be}{\al}\right|_\infty \int_\Om f^2 \al d \la + 2|\nabla \rho|_\infty \left|\frac{\be}{\al}\right|_\infty \lp \int_\Om f^2 \al d\la \int_\Om |\nabla f|^2 \al d\la \rp^{1/2}\\
	&\le \left|\frac{\be}{\al}\right|_\infty \lp \left|\lp \Delta \rho +\frac{\nabla \be}{\be} \cdot \nabla \rho \rp^-\right|_\infty  + |\nabla \rho|_\infty \rp \int_\Om f^2 \al d\la + \lp |\nabla \rho|_\infty \left|\frac{\be}{\al}\right|_\infty \rp \int_\Om |\nabla f|^2 \al d\la.
\end{align*}
\end{small} Thus for $K_3:=\left|\frac{\be}{\al}\right|_\infty \lp \left|\lp \Delta \rho +\frac{\nabla \be}{\be} \cdot \nabla \rho \rp^-\right|_\infty  + |\nabla \rho|_\infty \rp$
\begin{equation*}
	|f|_{2,\be d\si} \le \sqrt{K_3} |f|_{W^{1,2}(\Om,\al d\la)}.
\end{equation*}
\end{remark}

$W^{1,2}(\Om, \al d\la)$ is the completion of smooth functions whose derivatives up to degree 1 are in $L^2(\Om, \al d\la)$ and thus the inequality also holds for all functions in $W^{1,2}(\Om, \al d\la)$. Therefore by using an explicit value for $K_3$ as obtained from Lemma~\ref{lem:rho} we may give an explicit upper bound for the norm of the trace operator $\vert_{\dOm} :W^{1,2}(\Om, \al d\la) \to L^2(\dOm, \be d\si)$:

\begin{proposition}\label{prop:sobtracebound}
Let $k_2\in\mathbb{R}$ such that $\mathrm{sect} \le k_2$ and $\gamma_2\in\mathbb{R}$ such that $\mathrm{I\!I}\le \gamma_2 id$. Then the norm of the Sobolev trace operator $\vert_{\dOm} :W^{1,2}(\Om, \al d\la) \to L^2(\dOm, \be d\si)$ is bounded from above by
\begin{equation*}
	\lp \left|\frac{\be}{\al}\right|_\infty \lp \inf_{t_1\in(0,h_2^{-1}(0))} \sup_{\{\rho_{\dOm}\le t_1\}} \lp 1- \frac{\rho_{\dOm}}{t_1} \rp \lp (d-1)\frac{h_2'}{h_2}(\rho_{\dOm}) - \frac{1}{t_1-\rho_{\dOm}} - \frac{|\nabla \be|}{\be} \rp^-  + 1 \rp\rp^{1/2}.
\end{equation*}	
\end{proposition}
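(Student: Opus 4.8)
The plan is to combine the trace inequality recorded in the Remark immediately preceding the statement with the explicit choice of $\rho$ furnished by Lemma~\ref{lem:rho}, so that the present Proposition is essentially a corollary. Write $T$ for the trace operator $\vert_{\dOm}:W^{1,2}(\Om,\al d\la)\to L^2(\dOm,\be d\si)$. First I would recall that for \emph{every} admissible $\rho\in C^1(\Om)$ --- i.e.\ one with $\frac{\partial \rho}{\partial N}\vert_{\dOm}=-1$ and $\nabla\rho$ Lipschitz on $\Om$ --- the preceding Remark yields $\|T\|\le\sqrt{K_3}$, where $K_3=\left|\frac{\be}{\al}\right|_\infty\lp\left|\lp\Delta\rho+\frac{\nabla\be}{\be}\cdot\nabla\rho\rp^-\right|_\infty+|\nabla\rho|_\infty\rp$. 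Since this holds for each such $\rho$, it suffices to exhibit one family of $\rho$ making $K_3$ as small as the asserted value.

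Next I would invoke Lemma~\ref{lem:rho}: under the standing hypotheses $sect\le k_2$ and $II\le\gamma_2\,id$ it constructs, for each $\eps>0$, an admissible $\rho$ satisfying $|\nabla\rho|_\infty\le 1$ and $\left|\lp\Delta\rho+\frac{\nabla\be}{\be}\cdot\nabla\rho\rp^-\right|_\infty\le I+\eps$, where $I$ denotes the infimum--supremum quantity $\inf_{t_1}\sup_{\{\rho_{\dOm}\le t_1\}}(\cdots)^-$ appearing in the statement. Substituting these two estimates into the expression for $K_3$ gives $K_3\le\left|\frac{\be}{\al}\right|_\infty\lp I+\eps+1\rp$, hence $\|T\|\le\lp\left|\frac{\be}{\al}\right|_\infty\,(I+\eps+1)\rp^{1/2}$ for every $\eps>0$.

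Finally I would let $\eps\downarrow 0$. The quantity $\|T\|$ is independent of $\eps$, so passing to the limit gives $\|T\|\le\lp\left|\frac{\be}{\al}\right|_\infty\,(I+1)\rp^{1/2}$, which is precisely the asserted bound. I would also record that the Remark's inequality, proved for $f\in C^1(\Om)$, extends by density to all of $W^{1,2}(\Om,\al d\la)$ --- as already noted in the text --- which is what makes the conclusion meaningful as a statement about $\|T\|$.

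I do not expect a genuine obstacle here: the analytic heart of the matter --- the construction of $\rho$ out of the distance-to-boundary function $\rho_{\dOm}$ and the Laplacian comparison estimates~\eqref{eq:laplacecomp1} and~\eqref{eq:laplacecomp2} --- is already done in Lemma~\ref{lem:rho}, and the trace estimate is packaged in the preceding Remark. The one point demanding care is that Lemma~\ref{lem:rho} controls $\left|\lp\cdots\rp^-\right|_\infty$ only up to the additive slack $\eps$, so one must take the limit $\eps\downarrow 0$ rather than substitute an optimal $\rho$ directly; since $\|T\|$ does not see $\eps$, this causes no difficulty.
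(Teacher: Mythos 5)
Your proposal is correct and follows exactly the route the paper intends: combine the trace bound $\|T\|\le\sqrt{K_3}$ from the Remark with the admissible $\rho$ constructed in Lemma~\ref{lem:rho}, and let $\eps\downarrow 0$ since $\|T\|$ does not depend on $\eps$. In fact you spell out the $\eps$-limit and the density extension to $W^{1,2}(\Om,\al d\la)$ more explicitly than the paper does, which leaves these steps implicit.
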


\bigskip
In the following proposition we show a boundary-interior version of the doubly weighted Sobolev-Poincar\'{e} inequality:

\begin{proposition}\label{prop:boundaryinterior2}
Let $(\Om,g)$ be a smooth, compact Riemannian manifold of dimension $d\ge 3$, with a connected boundary.
For any $\rho\in C^1(\Om)$ such that $\frac{\partial \rho}{\partial N}\vert_{\dOm}=-1$ and $\nabla \rho$ is Lipschitz continuous on $\Om$ it holds for all $f\in C^1(\Om)$ with $\int_{\Om} f d\laa =0$ and $p\in \left[2,\frac{2d-2}{d-2}\right]$\small
\begin{equation*}
\lp \int_{\dOm} |f|^p d\sib \rp^{2/p} \le \lp \lp \lp  p |\nabla \rho|_\infty \rp^{2} \left|\frac{\be}{\al}\right|_\infty \frac{A}{B}\lp C_{2(p-1),2}^{\be,\al}\rp^{2(p-1)}\rp^{1/p} + \left|\lp \Delta \rho+\nabla \rho \cdot \frac{\nabla \be}{\be} \rp^-\right|_\infty ^{2/p} \lp C_{p,2}^{\be,\al}\rp^2 \rp \int_\Om |\nabla f|^2 d\laa.
\end{equation*}
\end{proposition}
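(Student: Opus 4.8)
The plan is to follow the integration-by-parts strategy already used in the proof of Proposition~\ref{prop:varboundary}, now applied to $|f|^p$ in place of $f^2$, and then to convert the two resulting interior integrals into Dirichlet energies via the weighted Sobolev--Poincar\'e inequalities with exponents $p$ and $2(p-1)$. First I would use $\frac{\partial\rho}{\partial N}\vert_{\dOm}=-1$ to write $\int_{\dOm}|f|^p\,d\sib=-\frac1B\int_{\dOm}|f|^p\frac{\partial\rho}{\partial N}\be\,d\si$ and apply the divergence theorem to the field $|f|^p\be\nabla\rho$. Since $\nabla(|f|^p)=p|f|^{p-2}f\,\nabla f$, which is continuous for $p\ge 2$, this gives
\[
\int_{\dOm}|f|^p\,d\sib=-\frac1B\int_\Om\Big(p|f|^{p-2}f\,\be\,\nabla f\cdot\nabla\rho+|f|^p\be\big(\Delta\rho+\tfrac{\nabla\be}{\be}\cdot\nabla\rho\big)\Big)\,d\la.
\]
Estimating $-x\le x^-$ in the second summand and bounding the first integrand in absolute value by $p|f|^{p-1}|\nabla f|\,|\nabla\rho|_\infty\be$, I bound the right-hand side by $I_1+I_2$ with $I_1=\frac{p}{B}|\nabla\rho|_\infty\int_\Om|f|^{p-1}|\nabla f|\,\be\,d\la$ and $I_2=\frac1B\big|(\Delta\rho+\tfrac{\nabla\be}{\be}\cdot\nabla\rho)^-\big|_\infty\int_\Om|f|^p\be\,d\la$.

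I would then estimate the two interior integrals, using the normalization $\int_\Om f\,d\laa=0$ to apply the Sobolev--Poincar\'e inequalities directly to $f$. For $I_2$, writing $\int_\Om|f|^p\be\,d\la=B\int_\Om|f|^p\tfrac{\be}{B}\,d\la$ and using the inequality defining $C_{p,2}^{\be,\al}$ gives $I_2\le\big|(\Delta\rho+\tfrac{\nabla\be}{\be}\cdot\nabla\rho)^-\big|_\infty(C_{p,2}^{\be,\al})^p\big(\int_\Om|\nabla f|^2\,d\laa\big)^{p/2}$. For $I_1$, I would apply Cauchy--Schwarz to the product $|f|^{p-1}|\nabla f|$, bound $\int_\Om|\nabla f|^2\be\,d\la\le|\tfrac{\be}{\al}|_\infty A\int_\Om|\nabla f|^2\,d\laa$, and control $\int_\Om|f|^{2(p-1)}\tfrac{\be}{B}\,d\la$ by $(C_{2(p-1),2}^{\be,\al})^{2(p-1)}\big(\int_\Om|\nabla f|^2\,d\laa\big)^{p-1}$. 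The hypothesis $p\le\frac{2d-2}{d-2}$ is exactly what ensures $2(p-1)\le\frac{2d}{d-2}$, making the Sobolev--Poincar\'e inequality with exponent $2(p-1)$ and $q=2$ admissible. Collecting the powers of $A$, $B$ and $|\tfrac{\be}{\al}|_\infty$ yields $I_1\le p|\nabla\rho|_\infty(A/B)^{1/2}|\tfrac{\be}{\al}|_\infty^{1/2}(C_{2(p-1),2}^{\be,\al})^{p-1}\big(\int_\Om|\nabla f|^2\,d\laa\big)^{p/2}$.

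Finally I would raise $\int_{\dOm}|f|^p\,d\sib\le I_1+I_2$ to the power $2/p$ and use the subadditivity $(a+b)^{2/p}\le a^{2/p}+b^{2/p}$, valid because $p\ge2$ forces $2/p\le1$. Taking the $2/p$ power of the bounds for $I_1$ and $I_2$ turns each factor $\big(\int_\Om|\nabla f|^2\,d\laa\big)^{p/2}$ into a single $\int_\Om|\nabla f|^2\,d\laa$, and a direct check shows $I_1^{2/p}$ reproduces the first bracketed term and $I_2^{2/p}$ the second. I expect the only delicate points to be this concavity step together with the exponent bookkeeping: the choice of the two Sobolev--Poincar\'e exponents (namely $p$ for the zeroth-order term and $2(p-1)$ for the product term) is dictated precisely by the requirement that, after applying $x\mapsto x^{2/p}$, both contributions collapse to a single power of the Dirichlet energy, and the admissible range $p\le\frac{2d-2}{d-2}$ is exactly the condition placing the second exponent within the Sobolev range. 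The remaining technical points, namely differentiability of $|f|^p$ at the zero set of $f$ and applicability of the divergence theorem, are harmless for $p\ge2$ and follow as in Proposition~\ref{prop:varboundary}.
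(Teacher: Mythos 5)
Your proposal is correct and takes essentially the same route as the paper's proof: the same divergence-theorem step applied to $|f|^p\be\nabla\rho$, the same split into the gradient term and the $\lp\Delta\rho+\tfrac{\nabla\be}{\be}\cdot\nabla\rho\rp^-$ term, Cauchy--Schwarz on the mixed term, the weighted Sobolev--Poincar\'e inequalities with exponents $2(p-1)$ and $p$ (with the same observation that $p\le\tfrac{2d-2}{d-2}$ keeps $2(p-1)$ in the admissible range), and subadditivity of $x\mapsto x^{2/p}$. The only difference is the immaterial reordering of when the bound $\int_\Om|\nabla f|^2\be\,d\la\le\left|\frac{\be}{\al}\right|_\infty A\int_\Om|\nabla f|^2\,d\laa$ is applied relative to taking the $2/p$ power.
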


\begin{proof}
\allowdisplaybreaks
We may calculate as in the previous proofs to obtain
\begin{align*}
 \Bigl(& \int_{\dOm} |f|^p d\sib \Bigr)^{2/p} = \lp -\int_{\dOm} |f|^p \frac{\partial \rho}{\partial N} \frac{\be}{B} d\si \rp^{2/p} \\
	&= \lp  - \int_\Om \nabla \rho \cdot \frac{\nabla(|f|^p \be)}{B}d\la -\int_\Om |f|^p \frac{\be}{B}\Delta \rho d\la\rp^{2/p} \\
	&= \lp  - p\int_\Om |f|^{p-1}\nabla \rho \cdot \nabla f \frac{\be}{B}d\la -\int_\Om |f|^p \frac{\be}{B}\lp \Delta \rho+\nabla \rho\cdot\frac{\nabla \be}{\be} \rp d\la\rp^{2/p} \\
	&\le \lp |\nabla \rho|_\infty p \int_\Om |f|^{p-1} |\nabla f| \frac{\be}{B} d\la + \left|\lp \Delta \rho+\nabla \rho \cdot \frac{\nabla \be}{\be} \rp^-\right|_\infty \int_\Om |f|^p \frac{\be}{B} d\la  \rp^{2/p} \\
	&\le \lp  |\nabla \rho|_\infty p \lp \int_\Om |f|^{2(p-1)} \frac{\be}{B}d\la \rp ^{1/2} \lp \int_\Om |\nabla f|^2 \frac{\be}{B}d\la \rp ^{1/2} + \left|\lp \Delta \rho+\nabla \rho \cdot \frac{\nabla \be}{\be} \rp^-\right|_\infty \int_\Om |f|^p \frac{\be}{B} d\la  \rp^{2/p} \\
	&\le \biggl(|\nabla \rho|_\infty p \lp C_{2(p-1),2}^{\be,\al} \lp \int_\Om |\nabla f|^2 \frac{\al}{A}d\la\rp^{1/2} \rp^{(p-1)} \lp \int_\Om |\nabla f|^2 \frac{\be}{B}d\la \rp^{1/2} \\
	&+  \left|\lp \Delta \rho+\nabla \rho \cdot \frac{\nabla \be}{\be} \rp^-\right|_\infty \lp C_{p,2}^{\be,\al} \lp \int_\Om |\nabla f|^2 \frac{\al}{A}d\la\rp^{1/2} \rp^{p} \biggr)^{2/p} \\
	&\le \lp p |\nabla \rho|_\infty \rp^{2/p} \lp \lp C_{2(p-1),2}^{\be,\al} \rp^2 \int_\Om |\nabla f|^2 \frac{\al}{A}d\la \rp^{(p-1)/p} \lp \int_\Om |\nabla f|^2 \frac{\be}{B}d\la \rp ^{1/p} \\
	& + \left|\lp \Delta \rho+\nabla \rho \cdot \frac{\nabla \be}{\be} \rp^-\right|_\infty^{2/p} \lp C_{p,2}^{\be,\al}\rp^2 \int_\Om |\nabla f|^2 \frac{\al}{A}d\la \\
	&=\lp \lp p |\nabla \rho|_\infty \rp^{2} \left|\frac{\be}{\al}\right|_\infty \frac{A}{B} \lp C_{2(p-1),2}^{\be,\al} \rp^{2(p-1)}\rp^{1/p} \int_\Om |\nabla f|^2 \frac{\al}{A}d\la \\
	&+ \left|\lp \Delta \rho+\nabla \rho \cdot \frac{\nabla \be}{\be} \rp^-\right|_\infty^{2/p}  \lp C_{p,2}^{\be,\al}\rp^2 \int_\Om |\nabla f|^2 \frac{\al}{A}d\la \\
	&=\lp \lp \lp  p |\nabla \rho|_\infty \rp^{2} \left|\frac{\be}{\al}\right|_\infty \frac{A}{B} \lp C_{2(p-1),2}^{\be,\al}\rp^{2(p-1)}\rp^{1/p} + \left|\lp \Delta \rho+\nabla \rho \cdot \frac{\nabla \be}{\be} \rp^-\right|_\infty ^{2/p} \lp C_{p,2}^{\be,\al}\rp^2 \rp \int_\Om |\nabla f|^2 d\laa.
\end{align*}
Here we have used the weighted Sobolev-Poincar\'{e} inequalities associated with $C_{2(p-1),2}^{\be,\al}$ and $C_{p,2}^{\be,\al}$. Note therefor that for $p\in \Bigl[2,\frac{2d-2}{d-2}\Bigr]$ it holds $p, 2(p-1) \in \Bigl[1,\frac{2d}{d-2}\Bigr]$.
\end{proof}
\smallskip
\subsection{Weighted boundary trace logarithmic Sobolev inequalities}\label{ssec:boundarytracelogsob}

\begin{lemma}[Rothaus' Lemma]\label{lem:rothaus}
Let $f:\dOm\to\mathbb{R}$ be measurable and assume that \\ $\int_{\dOm} f^2 \ln(1+f^2)d\sib <\infty$. For every $a\in\mathbb{R}$
\begin{equation*}
\ent_{\sib}\lp(f+a)^2\rp \le \ent_{\sib} \lp f^2 \rp + 2\int_{\dOm} f^2 d\sib.
\end{equation*}
\end{lemma}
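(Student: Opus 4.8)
The plan is to observe first that this statement has nothing to do with the geometry of $\Om$ or $\dOm$: it is the classical lemma of Rothaus for an arbitrary probability space, here applied to $(\dOm,\sib)$. Accordingly one may simply invoke the known result, but since $\sib$ is merely a probability measure a self-contained argument is short. Throughout I would use the integrability hypothesis $\int_{\dOm} f^2\log(1+f^2)\,d\sib<\infty$ to guarantee that all entropies below are finite and that differentiation under the integral sign is justified by dominated convergence; the degenerate case $f+a=0$ $\sib$-a.e.\ is trivial since then both sides vanish.

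First I would introduce the single-variable function $\Phi(a):=Ent_{\sib}\big((f+a)^2\big)$ and reduce the claim to the estimate $\sup_{a\in\mathbb R}\Phi(a)\le Ent_{\sib}(f^2)+2\int_{\dOm}f^2\,d\sib=\Phi(0)+2\int_{\dOm}f^2\,d\sib$. Writing $u=f+a$ and $M(a)=\int_{\dOm}u^2\,d\sib$, a direct computation gives
\[
\Phi'(a)=2\int_{\dOm}u\,\log\frac{u^2}{M(a)}\,d\sib ,
\]
and a second differentiation, combined with Jensen's inequality $\int_{\dOm}\log(u^2/M(a))\,d\sib\le 0$, produces an upper bound on $\Phi''$. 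I would then record the boundary behaviour: expanding $(f+a)^2=a^2(1+f/a)^2$ and using the second-order expansion of the entropy functional near a constant shows $\Phi(a)\to 2Var_{\sib}(f)$ as $a\to\pm\infty$. Since $2Var_{\sib}(f)\le 2\int_{\dOm}f^2\,d\sib\le\Phi(0)+2\int_{\dOm}f^2\,d\sib$, the inequality already holds in the limit, so only the supremum over bounded $a$ remains.

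Because $\Phi$ is smooth with finite limits at $\pm\infty$, its supremum is either attained in these limits (already handled) or at an interior critical point $a_0$ with $\Phi'(a_0)=0$, i.e.\ $\int_{\dOm}u_0\log(u_0^2/M_0)\,d\sib=0$ for $u_0=f+a_0$ and $M_0=M(a_0)$. The final step is to bound $\Phi(a_0)=\int_{\dOm}u_0^2\log(u_0^2/M_0)\,d\sib$ by $Ent_{\sib}(f^2)+2\int_{\dOm}f^2\,d\sib$, using the critical-point identity together with an elementary convexity estimate for $s\mapsto s\log s$ and the expansion $u_0^2=f^2+2a_0 f+a_0^2$.

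I expect this last estimate to be the main obstacle: converting the vanishing of the weighted first moment $\int_{\dOm}u_0\log(u_0^2/M_0)\,d\sib=0$ into a genuine bound on the weighted second moment $\int_{\dOm}u_0^2\log(u_0^2/M_0)\,d\sib$ is precisely the heart of Rothaus' argument and the only non-routine point. An alternative that bypasses the critical-point analysis is to split $f=g+\bar f$ with $\bar f=\int_{\dOm} f\,d\sib$ and apply the mean-zero form of the lemma to $g$, which immediately gives $\Phi(a)\le Ent_{\sib}(g^2)+2Var_{\sib}(f)$; one is then left to compare $Ent_{\sib}(g^2)$ with $Ent_{\sib}(f^2)$, which again rests on the same elementary convexity input.
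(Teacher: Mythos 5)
Your opening observation matches exactly how the paper treats this statement: the lemma is the classical result of Rothaus on an arbitrary probability space, here instantiated on $(\dOm,\sib)$, and the paper gives no proof of it at all --- it is quoted as a known, named lemma, with the monograph of Bakry--Gentil--Ledoux (cited as \cite{bgl} for the proof of the lemma immediately following, and containing this statement as Lemma 5.1.4) serving as the implicit reference. So ``simply invoke the known result'' is legitimate and is, in substance, what the paper itself does.

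The self-contained argument you sketch, however, has a genuine gap, and you flag it yourself. Everything you actually verify --- the formula $\Phi'(a)=2\int_{\dOm} u\log\lp u^2/M(a)\rp d\sib$, the limits $\Phi(a)\to 2Var_{\sib}(f)$ as $a\to\pm\infty$, and the reduction of $\sup_a\Phi(a)$ to interior critical points --- is routine scaffolding; the estimate that constitutes the lemma, namely bounding $\Phi(a_0)=\int_{\dOm} u_0^2\log\lp u_0^2/M_0\rp d\sib$ at a critical point by $Ent_{\sib}(f^2)+2\int_{\dOm} f^2 d\sib$ using only the identity $\int_{\dOm} u_0\log\lp u_0^2/M_0\rp d\sib=0$, is deferred to an ``elementary convexity estimate'' that is never stated, let alone proved. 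The alternative route you offer is no better and is in fact circular: applying ``the mean-zero form of the lemma'' to $g=f-\bar f$ is an instance of the very lemma being proved, and the remaining comparison of $Ent_{\sib}(g^2)$ with $Ent_{\sib}(f^2)$ is essentially equivalent to the lemma again --- note that the centered form only yields $Ent_{\sib}(f^2)\le Ent_{\sib}(g^2)+2Var_{\sib}(f)$, i.e.\ an inequality in the wrong direction, so no cheap comparison is available. As it stands, the proposal establishes nothing beyond what citing the classical result already gives; to make it genuinely self-contained you would need to carry out Rothaus' critical-point computation in full, or reproduce the argument of Lemma 5.1.4 in \cite{bgl}, which rests on the dual (variational) representation of entropy combined with a pointwise inequality --- an ingredient absent from your sketch.
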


\begin{lemma}\label{lem:bspent}
If $f\in C^1(\Om)$ fulfills $\int_\Om f d\sib=0$ and if there are constants $\tilde{C}_{p,2}$ such that
\begin{equation}\label{eq:cdn1}
\lp \int_{\dOm} |f|^p d\sib \rp^{2/p} \le \tilde{C}_{p,2} \int_{\Om} |\nabla f|^2 d\laa, \forall p\in\left[2, \frac{2d-2}{d-2}\right],
\end{equation}
then it holds
\begin{equation*}
\ent_{\sib}\lp f^2\rp \le \inf_{p\in\left[2,\frac{2d-2}{d-2} \right]} \frac{p}{p-2} \frac{\tilde{C}_{p,2}}{e} \int_{\Om} |\nabla f|^2 d\laa.
\end{equation*}
\end{lemma}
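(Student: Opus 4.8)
The plan is to recognize $Ent_{\sib}(f^2)$ as an exponent-derivative of the $L^p(\sib)$-norm and to bound that derivative by a finite difference in $p$, so that hypothesis~\eqref{eq:cdn1} can be inserted at a single exponent $p$. Write $F(p):=\lp \int_{\dOm}|f|^p d\sib\rp^{2/p}$; I may assume $f\neq 0$ $\sib$-a.e., as otherwise both sides vanish. A direct differentiation under the integral sign (valid since $f\in C^1(\Om)$ is bounded and $\sib$ is finite) gives the identity
\[
Ent_{\sib}(f^2)=2F'(2),
\]
which is the standard observation that the logarithmic Sobolev functional is the first-order term in the expansion of $p\mapsto \|f\|_{L^p(\sib)}^2$ around $p=2$.

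Next I would exploit that $\sib$ is a probability measure, so that $r\mapsto \log\|f\|_{L^{1/r}(\sib)}$ is convex (Lyapunov's inequality / interpolation of $L^p$-norms). Writing $F(p)=\exp\lp 2\psi(1/p)\rp$ with $\psi$ this convex function, the derivative $F'(2)$ is expressed through $\psi'(1/2)$, and convexity bounds $\psi'(1/2)$ from below by the secant slope between $r=1/2$ and $r=1/p$ for any $p>2$. Unwinding this (here $\|f\|_{L^p(\sib)}\ge \|f\|_{L^2(\sib)}$ by Jensen, so the logarithm below is nonnegative) yields the defective inequality
\[
Ent_{\sib}(f^2)\le \frac{p}{p-2}\lp \int_{\dOm} f^2 d\sib\rp \log \frac{\lp \int_{\dOm}|f|^p d\sib\rp^{2/p}}{\int_{\dOm} f^2 d\sib}.
\]

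Finally I would remove the logarithm and insert the Sobolev--Poincar\'e input. The elementary inequality $x\log(y/x)\le y/e$ for $x,y>0$ (which follows from $\max_{t>0}\tfrac{\log t}{t}=\tfrac1e$), applied with $x=\int_{\dOm}f^2 d\sib$ and $y=\lp\int_{\dOm}|f|^p d\sib\rp^{2/p}$, converts the right-hand side into $\frac{p}{p-2}\frac1e\lp\int_{\dOm}|f|^p d\sib\rp^{2/p}$. Hypothesis~\eqref{eq:cdn1} then bounds this by $\frac{p}{p-2}\frac{\tilde{C}_{p,2}}{e}\int_\Om|\nabla f|^2 d\laa$, and taking the infimum over $p\in\left[2,\frac{2d-2}{d-2}\right]$ gives the claim.

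The main obstacle is the intermediate defective inequality: making rigorous both the entropy-as-derivative identity and the passage from the derivative at $p=2$ to a usable bound at a fixed $p>2$, for which the log-convexity of $L^p$-norms is the essential tool (alternatively one can obtain the same inequality by a single H\"older estimate on $\int_{\dOm} f^2\log(f^2/\|f\|_{L^2(\sib)}^2)\,d\sib$). I note that the hypothesis $\int_{\dOm} f d\sib=0$ plays no role in the entropy manipulations themselves; it enters only through~\eqref{eq:cdn1}, which requires a Poincar\'e-type normalization in order to hold.
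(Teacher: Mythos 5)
Your proof is correct and follows essentially the same route as the paper's: both rest on the convexity of $r\mapsto\log\|f\|_{L^{1/r}(\sib)}$, the identification of $Ent_{\sib}(f^2)$ with (a multiple of) the derivative of the $L^p$-norm functional at $p=2$, a secant comparison to pass to a fixed $p>2$, and the elementary bound $x\log(y/x)\le y/e$ to remove the logarithm before inserting \eqref{eq:cdn1}. The only cosmetic differences are that the paper first normalizes $\int_{\dOm}f^2\,d\sib=1$ and then removes the logarithm by a tangent-line linearization of $r\mapsto\frac{p}{p-2}\log(\tilde{C}_{p,2}r)$ at $r=e/\tilde{C}_{p,2}$, which is exactly your inequality in homogeneous form.
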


The proof of this Lemma is adapted from~\cite[Proposition 6.2.3]{bgl}, see also~\cite[Proposition 5.1.8]{bgl} for details.

\begin{proof}
Without loss of generality we may assume $\int_{\dOm} f^2 d\sib=1$ and define
\begin{equation*}
\phi:(0,1]\to\mathbb{R},\ \phi(r):= \ln\lp \lp \int_{\dOm} |f|^{1/r}d\sib\rp^{r} \rp.
\end{equation*}
$\phi$ is convex and $\phi'\lp \frac{1}{2} \rp= -\ent_{\sib}(f^2)$. Now for $p\in \left[ 2, \frac{2d-2}{d-2}\right]$ via the convexity of $\phi$
\begin{align*}
d\lp \phi\lp\frac{1}{2}\rp - \phi\lp\frac{1}{p}\rp\rp = d \int_{1/p}^{1/2} \phi'(s)ds &\le d \phi'\lp\frac{1}{2}\rp \lp \frac{1}{2}- \frac{1}{p}\rp  \\
\Leftrightarrow\    - \ent_{\sib}(f^2) &\ge \frac{2p}{p-2} \lp \phi\lp\frac{1}{2}\rp-\phi\lp \frac{1}{p}\rp \rp \\
\Leftrightarrow\  \phantom{-}\ent_{\sib}(f^2) &\le \frac{p}{p-2}\ln\lp \lp \int_{\dOm} |f|^p d\sib\rp^{2/p} \rp.
\end{align*}
Inserting inequality~\eqref{eq:cdn1} we obtain
\begin{equation*}
\ent_{\sib}(f^2) \le \frac{p}{p-2} \ln\lp  \tilde{C}_{p,2} \int_{\Om} |\nabla f|^2 d\laa \rp.
\end{equation*}
We define $\tilde{\phi}:(0,\infty)\to\mathbb{R}, \tilde{\phi}(r):=\frac{p}{p-2}\ln(\tilde{C}_{p,2}r)$. $\tilde{\phi}$ is concave and we may thus compute
\begin{align*}
\ent_{\sib}(f^2) &\le \tilde{\phi}\lp \int_\Om |\nabla f|^2 d\laa \rp \le \tilde{\phi}(r) + \tilde{\phi}'(r)\lp \int_\Om |\nabla f|^2 d\laa -r\rp \\
& = \tilde{\phi}'(r) \int_\Om |\nabla f|^2 d\laa + \lp\tilde{\phi}(r)-r\tilde{\phi}'(r)\rp.
\end{align*}
Choosing $r=\frac{e}{\tilde{C}_{p,2}}$ the last term vanishes and we obtain
\begin{equation*}
\ent_{\sib}(f^2) \le \tilde{\phi}'\lp \frac{e}{\tilde{C}_{p,2}} \rp \int_\Om |\nabla f|^2 d\laa = \frac{p}{p-2} \frac{\tilde{C}_{p,2}}{e} \int_\Om|\nabla f|^2 d\laa.
\end{equation*}
\end{proof}
The assumption of Lemma~\ref{lem:bspent} is phrased in terms of a family of inequalities for $p\in \left[2, \frac{2d-2}{d-2}\right]$ and the conclusion in terms of an infimum over $p\in \left[2, \frac{2d-2}{d-2}\right]$ because such a family of inequalities is what was obtained in Proposition~\ref{prop:boundaryinterior2}. However, assuming \eqref{eq:cdn1} for one fixed $p\in \lp 2, \frac{2d-2}{d-2}\right]$ instead would result in the same statement for this fixed $p$ without the infimum.\\
The following proposition provides a constant $L_{\dOm,\Om}$ fulfilling condition \eqref{eq:ent} in terms of an auxiliary function $\rho$.

\begin{proposition}
Assume that $d\ge3$. For any $\rho\in C^1(\Om)$ such that $\frac{\partial \rho}{\partial N}\vert_{\dOm}=-1$ and $\nabla \rho$ is Lipschitz continuous on $\Om$ inequality~\eqref{eq:ent} in Proposition~\ref{prop:logsobinterpol} is fulfilled with
\begin{small}
\begin{align*}
L_{\dOm,\Om} = \inf_{p\in\left[2,\frac{2d-2}{d-2} \right]}& \frac{p}{p-2} \frac{1}{e}\lp \lp \lp p |\nabla \rho|_\infty \rp^{2} \left|\frac{\be}{\al}\right|_\infty \frac{A}{B}\lp C_{2(p-1),2}^{\be,\al}\rp^{2(p-1)}\rp^{1/p} + \left|\lp \Delta \rho+\nabla \rho \cdot \frac{\nabla \be}{\be} \rp^-\right|_\infty ^{2/p} \lp C_{p,2}^{\be,\al}\rp^2 \rp\\
& + \frac{2A}{B} \left|\frac{\be}{\al}\right|_\infty \lp C_{\laa}\left|\lp \Delta \rho +\frac{\nabla \be}{\be} \cdot \nabla \rho \rp^-\right|_\infty  + 2C_{\laa}^{(1/2)}|\nabla \rho|_\infty \rp.
\end{align*}
\end{small}
\end{proposition}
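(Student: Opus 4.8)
The plan is to assemble the claimed bound from the three preceding results by a normalisation-bridging argument: Rothaus' Lemma (Lemma \ref{lem:rothaus}) converts the boundary entropy into a form that can be attacked by the two trace inequalities at our disposal, namely the Sobolev--Poincar\'e trace family of Proposition \ref{prop:boundaryinterior2} (fed through Lemma \ref{lem:bspent}) and the $L^2$ trace bound of Proposition \ref{anotherprop}. The conceptual point is a mismatch of normalisations: $Ent_{\sib}(f^2)$ is a boundary quantity, while both trace inequalities are normalised by the \emph{interior} condition $\int_\Om f\,d\laa=0$. Rothaus' Lemma is precisely the device that lets us subtract the interior mean at the cost of one additive $\int_{\dOm}(\cdot)^2\,d\sib$ term, which is exactly what Proposition \ref{anotherprop} controls.

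Concretely, for $f\in C^1(\Om)$ I would set $\bar f:=\int_\Om f\,d\laa$ and $g:=f-\bar f$, so that $\int_\Om g\,d\laa=0$ and $\nabla g=\nabla f$. Since $g\in C^1(\Om)$ and $\dOm$ is compact, the integrability hypothesis of Lemma \ref{lem:rothaus} holds, and applying that lemma with $a=\bar f$ gives
\begin{equation*}
	Ent_{\sib}(f^2)=Ent_{\sib}\lp(g+\bar f)^2\rp\le Ent_{\sib}(g^2)+2\int_{\dOm} g^2\,d\sib.
\end{equation*}
Both terms on the right now refer to a function with zero $\laa$-mean, which is the normalisation required by everything that follows.

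For the first term, since $\int_\Om g\,d\laa=0$ I feed $g$ into Proposition \ref{prop:boundaryinterior2}, obtaining the family of inequalities \eqref{eq:cdn1} for every $p\in[2,\tfrac{2d-2}{d-2}]$ with $\tilde C_{p,2}$ equal to the constant displayed there; Lemma \ref{lem:bspent} then yields $Ent_{\sib}(g^2)\le \inf_p \tfrac{p}{p-2}\tfrac{\tilde C_{p,2}}{e}\int_\Om|\nabla g|^2\,d\laa$, which is precisely the first summand of the claimed $L_{\dOm,\Om}$. For the second term, Proposition \ref{anotherprop} (again valid because $\int_\Om g\,d\laa=0$) bounds $\int_{\dOm} g^2\,d\sib$ by $\tfrac{A}{B}\left|\tfrac{\be}{\al}\right|_\infty\bigl(C_{\laa}\left|(\Delta\rho+\tfrac{\nabla\be}{\be}\cdot\nabla\rho)^-\right|_\infty+2C_{\laa}^{1/2}|\nabla\rho|_\infty\bigr)\int_\Om|\nabla g|^2\,d\laa$, and multiplying by $2$ reproduces the second summand. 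Replacing $\nabla g$ by $\nabla f$ and adding the two bounds gives exactly inequality \eqref{eq:ent} with the asserted $L_{\dOm,\Om}$.

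I do not expect a genuine obstacle; the one thing worth checking carefully is the applicability of Lemma \ref{lem:bspent} to $g$. Its stated hypothesis carries a boundary mean-zero condition, but its proof uses only the assumed family \eqref{eq:cdn1} together with the harmless normalisation $\int_{\dOm} g^2\,d\sib=1$ achieved by scaling, and scaling leaves the ratio $Ent_{\sib}(g^2)/\int_\Om|\nabla g|^2\,d\laa$ invariant. Hence the estimate of Proposition \ref{prop:boundaryinterior2}, which requires only $\int_\Om g\,d\laa=0$, suffices. The whole argument is thus a bookkeeping assembly of the three earlier results, the only nontrivial insight being that Rothaus' Lemma trades the boundary-entropy normalisation for the interior normalisation shared by both trace inequalities.
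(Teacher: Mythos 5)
Your proof is correct and reaches the claimed constant, and it uses the same three ingredients as the paper (Rothaus' Lemma, Lemma \ref{lem:bspent} fed by Proposition \ref{prop:boundaryinterior2}, and the $L^2$ trace bound of Proposition \ref{anotherprop}), but your centering choice differs from the paper's in a way that is worth recording. The paper sets $a:=\int_{\dOm} f\,d\sib$ and works with $\tilde f := f-a$, i.e.\ it subtracts the \emph{boundary} mean so that the mean-zero hypothesis of Lemma \ref{lem:bspent} is literally satisfied; it then invokes Proposition \ref{prop:boundaryinterior2} and the proof of Proposition \ref{prop:varboundary} for $\tilde f$, even though both of those results are normalised by the \emph{interior} condition $\int_\Om \cdot\, d\laa=0$, which $\tilde f$ need not satisfy. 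You instead subtract the interior mean $\bar f=\int_\Om f\,d\laa$, which makes Propositions \ref{prop:boundaryinterior2} and \ref{anotherprop} directly applicable to $g=f-\bar f$, and you close the remaining gap by observing that the boundary mean-zero hypothesis of Lemma \ref{lem:bspent} is never used in its proof --- only the family \eqref{eq:cdn1} and scale invariance enter --- which is exactly right. So your bookkeeping is the one under which every cited hypothesis is actually met, whereas the paper's literal centering leaves a normalisation mismatch that precisely your observation about Lemma \ref{lem:bspent} is needed to repair. Both routes produce the identical $L_{\dOm,\Om}$, since in either case the recentred function has the same gradient as $f$.
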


\begin{proof}
Let $f\in C^1(\Om)$ then for $a:=\int_\Om f d\sib$ we define $\tilde{f}:=f-a$ and by Lemma~\ref{lem:rothaus} it holds
\begin{equation*}
\ent_{\sib}(f^2) = \ent_{\sib}((\tilde{f}+a)^2) \le \ent_{\sib}(\tilde{f}^2) + 2\int_{\dOm} \tilde{f}^2 d\sib.
\end{equation*}
Since $\int_\Om \tilde{f} d\sib=0$ the assumptions of Lemma~\ref{lem:bspent} are fulfilled due to Proposition~\ref{prop:boundaryinterior2} and we obtain
\begin{align*}
\ent_{\sib}(\tilde{f}^2)  \le \inf_{p\in\left[2,\frac{2d-2}{d-2} \right]} \frac{p}{p-2} \frac{1}{e} \biggl( &\lp \lp  p |\nabla \rho|_\infty \rp^{2} \left|\frac{\be}{\al}\right|_\infty \frac{A}{B}\lp C_{2(p-1),2}^{\be,\al}\rp^{2(p-1)}\rp^{1/p} \\
	&+ \left|\lp \Delta \rho+\nabla \rho \cdot \frac{\nabla \be}{\be} \rp^-\right|_\infty ^{2/p} \lp C_{p,2}^{\be,\al}\rp^2 \biggr) \cdot \int_{\Om} |\nabla \tilde{f}|^2 d\laa.
\end{align*}
Furthermore, by the proof of Proposition~\ref{prop:varboundary}
\begin{equation*}
\int_{\dOm} \tilde{f}^2 d\sib \le  \frac{A}{B} \left|\frac{\be}{\al}\right|_\infty \lp C_{\laa}\left|\lp \Delta \rho +\frac{\nabla \be}{\be} \cdot \nabla \rho \rp^-\right|_\infty  + 2C_{\laa}^{(1/2)}|\nabla \rho|_\infty  \rp \int_\Om |\nabla \tilde{f}|^2 d\laa.
\end{equation*}
Thus we have
\begin{small}
\begin{align*}
\ent_{\sib}(f^2)  \le \Biggl(\inf_{p\in\left[2,\frac{2d-2}{d-2} \right]}& \frac{p}{p-2} \frac{1}{e} \lp \lp \lp p |\nabla \rho|_\infty \rp^{2} \left|\frac{\be}{\al}\right|_\infty \frac{A}{B}\lp C_{2(p-1),2}^{\be,\al}\rp^{2(p-1)}\rp^{1/p} + \left|\lp \Delta \rho+\nabla \rho \cdot \frac{\nabla \be}{\be} \rp^-\right|_\infty ^{2/p} \lp C_{p,2}^{\be,\al}\rp^2 \rp \\
		& + \frac{2A}{B} \left|\frac{\be}{\al}\right|_\infty \lp C_{\laa}\left|\lp \Delta \rho +\frac{\nabla \be}{\be} \cdot \nabla \rho \rp^-\right|_\infty  + 2C_{\laa}^{(1/2)}|\nabla \rho|_\infty  \rp \Biggr) \int_{\Om} |\nabla \tilde{f}|^2 d\laa \\
	=  \Biggl( \inf_{p\in\left[2,\frac{2d-2}{d-2} \right]}& \frac{p}{p-2} \frac{1}{e}\lp \lp \lp p |\nabla \rho|_\infty \rp^{2} \left|\frac{\be}{\al}\right|_\infty \frac{A}{B}\lp C_{2(p-1),2}^{\be,\al}\rp^{2(p-1)}\rp^{1/p} + \left|\lp \Delta \rho+\nabla \rho \cdot \frac{\nabla \be}{\be} \rp^-\right|_\infty ^{2/p} \lp C_{p,2}^{\be,\al}\rp^2 \rp\\
	& + \frac{2A}{B} \left|\frac{\be}{\al}\right|_\infty \lp C_{\laa}\left|\lp \Delta \rho +\frac{\nabla \be}{\be} \cdot \nabla \rho \rp^-\right|_\infty  + 2C_{\laa}^{(1/2)}|\nabla \rho|_\infty \rp \Biggr) \int_{\Om} |\nabla f|^2 d\laa.
\end{align*}
\end{small}
\end{proof}

Combining this with Lemma~\ref{lem:rho} finally provides an explicit value for the constant $L_{\dOm,\Om}$ fulfilling condition \eqref{eq:ent} and thus allows to obtain upper bounds on $L_\mu$ and $\hat{L}_\mu$ via Propositions~\ref{prop:logsobinterpol} and~\ref{prop:logsobinterpolwobd}.

\begin{theorem}\label{thm:wbdrlogsob}
Assume that $d\ge3$. Let $k_2\in\mathbb{R}$ such that $sect \le k_2$ and $\gamma_2\in\mathbb{R}$ such that $\mathrm{I\!I}\le \gamma_2 id$. Then inequality~\eqref{eq:ent} in Proposition~\ref{prop:logsobinterpol} is fulfilled with
\begin{align*}
L_{\dOm,\Om} = &\inf_{p\in\left[2,\frac{2d-2}{d-2} \right]} \frac{p}{p-2} \frac{1}{e}\Biggl[ \lp p^{2} \left|\frac{\be}{\al}\right|_\infty \frac{A}{B}\lp C_{2(p-1),2}^{\be,\al}\rp^{2(p-1)}\rp^{1/p} \\
&+\lp\inf_{t_1\in(0,h_2^{-1}(0))} \sup_{\{\rho_{\dOm}\le t_1\}} \lp 1- \frac{\rho_{\dOm}}{t_1} \rp \lp (d-1)\frac{h_2'}{h_2}(\rho_{\dOm}) - \frac{1}{t_1-\rho_{\dOm}} - \frac{|\nabla \be|}{\be} \rp^-\rp^{2/p} \lp C_{p,2}^{\be,\al}\rp^2 \Biggr]  \\
& + \frac{2A}{B} \left|\frac{\be}{\al}\right|_\infty \biggl( C_{\laa}\inf_{t_1\in(0,h_2^{-1}(0))} \sup_{\{\rho_{\dOm}\le t_1\}} \lp 1- \frac{\rho_{\dOm}}{t_1} \rp \lp (d-1)\frac{h_2'}{h_2}(\rho_{\dOm}) - \frac{1}{t_1-\rho_{\dOm}} - \frac{|\nabla \be|}{\be} \rp^-  \\
&+ 2C_{\laa}^{(1/2)} \biggr),
\end{align*}
where $h_2$ is as defined in equation~\eqref{eq:hfctn} with $k=k_2$ and $\gamma=\gamma_2$.
\end{theorem}

\noindent\textbf{Acknowledgements} The author would like to thank Max von Renesse and David Tewodrose for their feedback
on earlier versions of the present manuscript.\\

\section*{Declarations}

\noindent\textbf{Ethical approval} Not applicable\\

\noindent\textbf{Funding} Not applicable\\

\noindent\textbf{Availability of data and materials} Not applicable

\bibliographystyle{abbrv}
\bibliography{bib}
\end{document}